\title[Canonical metric on the space of symplectic invariant tensors]
{Canonical metric on the space of symplectic invariant tensors and its applications}
\author{Shigeyuki Morita}
\email{morita@ms.u-tokyo.ac.jp}
\address{Graduate School of Mathematical Sciences, 
University of Tokyo, 
3-8-1 Komaba, 
Meguro-ku, Tokyo, 153-8914, Japan}
\subjclass[2000]{Primary~20C30;32G15 , Secondary~20J06; 55R40}
\keywords{symplectic tensor, symmetric group, Young diagram, tautological algebra, moduli space of curves, transversely symplectic foliation}
\dedicatory{Dedicated to the memory of Professor Akio Hattori}
\newtheorem{thm}{Theorem}[section]
\newtheorem{prop}[thm]{Proposition}
\newtheorem{lem}[thm]{Lemma}
\newtheorem{cor}[thm]{Corollary}
\theoremstyle{definition}
\newtheorem{definition}[thm]{Definition}
\newtheorem{example}[thm]{Example}
\newtheorem{remark}[thm]{Remark}
\newtheorem{conj}[thm]{Conjecture}
\begin{document}

\newcommand{\Mg}{\mathcal{M}_g}
\newcommand{\Mgp}{\mathcal{M}_{g,\ast}}
\newcommand{\Mgb}{\mathcal{M}_{g,1}}

\newcommand{\hg}{\mathfrak{h}_{g,1}}
\newcommand{\ag}{\mathfrak{a}_g}
\newcommand{\Ln}{\mathcal{L}_n}

\newcommand{\Sg}{\Sigma_g}
\newcommand{\Sgb}{\Sigma_{g,1}}
\newcommand{\la}{\lambda}

\newcommand{\Symp}[1]{Sp(2g,\mathbb{#1})}
\newcommand{\symp}[1]{\mathfrak{sp}(2g,\mathbb{#1})}
\newcommand{\gl}[1]{\mathfrak{gl}(n,\mathbb{#1})}

\newcommand{\At}[1]{\mathcal{A}_{#1}^t (H)}
\newcommand{\Hq}{H_{\mathbb{Q}}}

\newcommand{\Ker}{\mathop{\mathrm{Ker}}\nolimits}
\newcommand{\Hom}{\mathop{\mathrm{Hom}}\nolimits}
\renewcommand{\Im}{\mathop{\mathrm{Im}}\nolimits}

\newcommand{\Der}{\mathop{\mathrm{Der}}\nolimits}
\newcommand{\Out}{\mathop{\mathrm{Out}}\nolimits}
\newcommand{\Aut}{\mathop{\mathrm{Aut}}\nolimits}
\newcommand{\Q}{\mathbb{Q}}
\newcommand{\Z}{\mathbb{Z}}
\newcommand{\R}{\mathbb{R}}

\begin{abstract}
Let $\Sigma_g$ be a closed oriented surface of genus $g$
and let $H_\Q$ denote $H_1(\Sigma_g;\Q)$ which we understand
to be the standard symplectic vector space over $\Q$ of dimension $2g$.
We introduce a canonical metric on the space $(H_\Q^{\otimes 2k})^{\mathrm{Sp}}$
of symplectic invariant tensors by analyzing the structure
of the vector space $\Q\mathcal{D}^\ell(2k)$ generated by linear chord diagrams
with $2k$ vertices. 
We decompose
$(H_\Q^{\otimes 2k})^{\mathrm{Sp}}$ as an {\it orthogonal} direct sum of eigenspaces $U_\lambda$ where
$\lambda$ is indexed by 
the set of all the Young diagrams with $k$ boxes. 
This gives a complete
description of how the spaces $(H_\Q^{\otimes 2k})^{\mathrm{Sp}}$ degenerate according 
as the genus decreases
from the stable range $g\geq k$
to the last case $g=1$.

As an application of our canonical metric, we obtain certain relations among the
Mumford-Morita-Miller tautological classes, in a systematic way, which hold in the tautological
algebra {\it in cohomology} of the moduli space of curves. 

\end{abstract}

\renewcommand\baselinestretch{1.1}
\setlength{\baselineskip}{16pt}

\newcounter{fig}
\setcounter{fig}{0}

\maketitle

\section{Introduction and statements of the main results}\label{sec:intro}

Let $\Sigma_g$ be a closed oriented surface of genus $g$.
We denote $H_1(\Sigma_g;\Q)$ simply by $H_\Q$ and let
$$
\mu: H_\Q\otimes H_\Q\rightarrow \Q
$$
be the intersection pairing which is a non-degenerate skew symmetric
bilinear form. If we choose a symplectic basis of $H_\Q$, then 
the automorphism group of $(H_\Q,\mu)$ can be identified with the symplectic 
group $\mathrm{Sp}(2g,\Q)$ and $H_\Q$ serves as the fundamental
representation of $\mathrm{Sp}(2g,\Q)$. Let $(H_\Q^{\otimes{2k}})^{\mathrm{Sp}}$
denote the $\mathrm{Sp}(2g,\Q)$-invariant subspace of the tensor product 
$H_\Q^{\otimes{2k}}$. Now consider the bilinear mapping
$$
\mu^{\otimes 2k}: H_\Q^{\otimes{2k}}\otimes H_\Q^{\otimes{2k}}\rightarrow \Q
$$
defined by
$$
(u_1\otimes\cdots\otimes u_{2k})\otimes (v_1\otimes\cdots\otimes v_{2k})
\mapsto \prod_{i=1}^{2k}\, \mu(u_i,v_i) \quad (u_i, v_i\in H_\Q).
$$
which is clearly symmetric. Hence the restriction of 
$\mu^{\otimes 2k}$ to
the subspace $(H_\Q^{\otimes{2k}})^{\mathrm{Sp}}\otimes (H_\Q^{\otimes{2k}})^{\mathrm{Sp}}$
induces a symmetric bilinear form on $(H_\Q^{\otimes{2k}})^{\mathrm{Sp}}$.

Recall that irreducible representations of the symmetric group
$\mathfrak{S}_k$ is indexed by Young diagrams
$\lambda=[\lambda_1\lambda_2\cdots\lambda_h]$
whose number of boxes, denoted by $|\lambda|=\lambda_1+\cdots\lambda_h$,
is $k$. We denote the corresponding representation by $\lambda_{\mathfrak{S}_k}$.
For each Young diagram $\lambda$ as above, we define two types
of Young diagrams, denoted by $2\lambda$ and $\lambda^\delta$, with $2k$ boxes. One is defined as
$2\lambda=[2\lambda_1 2\lambda_2 \cdots 2\lambda_h]$ and the other is
defined to be $\lambda^\delta=[\lambda_1\lambda_1\lambda_2\lambda_2\cdots\lambda_h\lambda_h]$.
For example, if $\lambda=[431]$, then
$2\lambda=[862]$ and $\lambda^\delta=[443311]$. We call Young
diagrams with the former type (resp. the latter type) 
of {\it even type} (resp. with {\it multiple double floors}).

\begin{thm}

For any $g$, the symmetric bilinear form $\mu^{\otimes 2k}$ on $(H_\Q^{\otimes 2k})^{\mathrm{Sp}}$
is positive definite so that it defines a metric on this space. Furthermore, there exists an orthogonal
direct sum decomposition
$$
(H_\Q^{\otimes 2k})^{\mathrm{Sp}}\cong \bigoplus_{|\lambda |=k,\ h(\lambda)\leq g}
U_{\lambda}
$$
in terms of certain subspaces $U_\lambda$.
With respect to the natural action of $\mathfrak{S}_{2k}$
on $(H_\Q^{\otimes 2k})^{\mathrm{Sp}}$, each subspace $U_\lambda$
is an irreducible $\mathfrak{S}_{2k}$-submodule and we have an isomorphism
$$
U_\lambda\cong (\lambda^\delta)_{\mathfrak{S}_{2k}}.
$$
In other words, the above orthogonal
direct sum decomposition gives also the irreducible decomposition of the
$\mathfrak{S}_{2k}$-module $(H_\Q^{\otimes 2k})^{\mathrm{Sp}}$.

\label{thm:ortho}
\end{thm}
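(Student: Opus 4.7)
The plan is to pass through the space $\Q\mathcal{D}^\ell(2k)$ of linear chord diagrams, which by the first fundamental theorem of invariant theory for $\mathrm{Sp}(2g,\Q)$ admits a natural $\mathfrak{S}_{2k}$-equivariant surjection
\[
\pi\colon\Q\mathcal{D}^\ell(2k)\twoheadrightarrow (H_\Q^{\otimes 2k})^{\mathrm{Sp}},
\]
sending a chord diagram $D$ to the associated contraction tensor $T_D$ (with signs dictated by the skew-symmetry of $\mu$). This map is an isomorphism in the stable range $g\geq k$. Thus the theorem will follow from a combinatorial description of $\Q\mathcal{D}^\ell(2k)$ as an $\mathfrak{S}_{2k}$-module, together with an explicit computation of the pullback of $\mu^{\otimes 2k}$ to it.

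Since $\mathfrak{S}_{2k}$ acts transitively on chord diagrams with stabilizer the hyperoctahedral subgroup $\mathcal{B}_k=\mathfrak{S}_2\wr\mathfrak{S}_k$, the chord-diagram space is identified with an induced representation $\mathrm{Ind}_{\mathcal{B}_k}^{\mathfrak{S}_{2k}}(\chi)$, where $\chi$ is the character of $\mathcal{B}_k$ equal to sign on each $\mathfrak{S}_2$-factor and trivial on $\mathfrak{S}_k$. Because $\chi$ is precisely the restriction of $\mathrm{sgn}_{\mathfrak{S}_{2k}}$ to $\mathcal{B}_k$, we have $\mathrm{Ind}_{\mathcal{B}_k}^{\mathfrak{S}_{2k}}(\chi)=\mathrm{Ind}_{\mathcal{B}_k}^{\mathfrak{S}_{2k}}(\mathbf{1})\otimes\mathrm{sgn}$. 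The classical Gelfand-pair decomposition $\mathrm{Ind}_{\mathcal{B}_k}^{\mathfrak{S}_{2k}}(\mathbf{1})\cong\bigoplus_{|\lambda|=k}(2\lambda)_{\mathfrak{S}_{2k}}$, combined with the partition identity $(2\lambda)'=(\lambda')^\delta$, then yields the multiplicity-free decomposition
\[
\Q\mathcal{D}^\ell(2k)\ \cong\ \bigoplus_{|\lambda|=k}(\lambda^\delta)_{\mathfrak{S}_{2k}},
\]
which furnishes the required irreducible submodules $U_\lambda$.

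Next, $\mu^{\otimes 2k}$ is manifestly $\mathfrak{S}_{2k}$-invariant, so by Schur's lemma the multiplicity-freeness above forces the decomposition to be $\mu^{\otimes 2k}$-orthogonal, with the restriction to each $U_\lambda$ equal to a scalar $c_\lambda(g)$ times the (essentially unique) invariant symmetric form on $(\lambda^\delta)_{\mathfrak{S}_{2k}}$. To extract $c_\lambda(g)$, I would pull the pairing back to $\Q\mathcal{D}^\ell(2k)$, where for two chord diagrams $D_1,D_2$ it is given by a closed formula of the shape
\[
\mu^{\otimes 2k}(T_{D_1},T_{D_2})\ =\ \varepsilon(D_1,D_2)\,(2g)^{c(D_1,D_2)},
\]
with $c(D_1,D_2)$ the number of connected components of the superposition $D_1\cup D_2$ and $\varepsilon\in\{\pm 1\}$ a combinatorially determined sign. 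Applying a Young symmetrizer for $\lambda^\delta$ to a model diagram and summing cycle contributions should produce $c_\lambda(g)$ as a product of factors linear in $g$, strictly positive when $h(\lambda)\leq g$ and vanishing precisely when $h(\lambda)>g$, in parallel with the symplectic hook-content formula for $\dim V_\lambda(\mathrm{Sp}(2g,\Q))$.

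The main obstacle is the explicit evaluation of $c_\lambda(g)$ and the sign verification needed for positivity. A direct symmetrizer computation is feasible but combinatorially intricate; the cleanest conceptual route is to view $\Q\mathcal{D}^\ell(2k)$ as a module for the Brauer algebra $B_k(-2g)$, whose cell-module decomposition and known central characters compute these eigenvalues at a stroke. Once $c_\lambda(g)>0$ for $h(\lambda)\leq g$ and $c_\lambda(g)=0$ for $h(\lambda)>g$ is established, the kernel of $\pi$ is identified with $\bigoplus_{h(\lambda)>g}U_\lambda$, and the image inherits the orthogonal irreducible decomposition $(H_\Q^{\otimes 2k})^{\mathrm{Sp}}\cong\bigoplus_{h(\lambda)\leq g}U_\lambda$ with positive-definite $\mu^{\otimes 2k}$.
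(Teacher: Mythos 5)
Your overall architecture coincides with the paper's: push the problem to the chord-diagram space via the Weyl surjection (Proposition \ref{prop:anti}), identify that space with a multiplicity-free induced module from the hyperoctahedral subgroup (Proposition \ref{prop:E}, proved there via Proposition \ref{prop:hk}; your sign twist $\chi$ is handled in the paper instead by making $\Phi$ ``anti''-equivariant and tensoring with $[1^{2k}]$ at the end, which is the same bookkeeping), deduce orthogonality of distinct summands from invariance of the form (your Schur-lemma argument is a legitimate shortcut for the paper's Young-symmetrizer computation in Proposition \ref{prop:lmortho}, and the paper's own remark concedes this part is standard), and finally identify $\mathrm{Ker}\,\Phi$ with the summands of vanishing eigenvalue. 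All of these structural reductions are sound, and your identity $(2\lambda)'=(\lambda')^\delta$ is the same partition identity the paper uses.

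The genuine gap is the step you flag but do not carry out: the evaluation of the scalars $c_\lambda(g)$. Everything quantitative in the theorem --- positive definiteness for every $g$ and the precise cutoff $h(\lambda)\leq g$ --- lives in the claim that $c_\lambda(g)$ is a product of linear factors, strictly positive exactly when $h(\lambda)\leq g$ and zero otherwise; ``should produce'' is not a proof, and this is where the paper spends its entire Section 4, extracting the formula $\mu_\lambda=\prod_b(2g-2s_b+t_b)$ of Theorem \ref{thm:lcd} by applying $c'_{2\lambda}$ to the base diagram $C_0$ and evaluating $\langle c'_{2\lambda}C_0,C_0\rangle$ through repeated use of Lemma \ref{lem:qij}, with a ``perfect canceling'' of signs doing the real work. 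Your proposed shortcut --- the Brauer algebra $B_k(-2g)$ and known central characters, i.e.\ Hanlon--Wales --- does not close the gap as stated: the paper explicitly notes that the Hanlon--Wales sign convention differs, so their intersection matrix is not the same and neither semi-positivity nor orthogonality transfers without redoing the sign bookkeeping. Relatedly, you leave the sign $\varepsilon(D_1,D_2)$ ``combinatorially determined,'' but the theorem is sensitive to it being exactly $(-1)^{k-r}$, matched against the $\mathrm{sgn}\,C$ factors built into the tensors $T_C$: with a different sign the form need not be semidefinite at all. Pinning down $\varepsilon$ (the paper's Proposition \ref{prop:lcdmu}, via Lemmas \ref{lem:muom} and \ref{lem:qij}) and then actually executing the symmetrizer computation are the missing, and decisive, steps.
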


\begin{cor}
The dimension of the space of invariant tensors $(H_\Q^{\otimes 2k})^{\mathrm{Sp}}$
is given by
$$
\dim\, (H_\Q^{\otimes 2k})^{\mathrm{Sp}}
=\sum_{ |\lambda|=k,\ h(\lambda)\leq g} \mathrm{dim}\, (\lambda^\delta)_{\mathfrak{S}_{2k}}.
$$
\label{cor:dim}
\end{cor}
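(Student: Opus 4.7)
The plan is to derive the corollary as an immediate numerical consequence of Theorem~\ref{thm:ortho}. First I would observe that a direct sum decomposition of vector spaces gives additivity of dimensions, so the orthogonal decomposition
$$
(H_\Q^{\otimes 2k})^{\mathrm{Sp}} \cong \bigoplus_{|\lambda|=k,\ h(\lambda)\leq g} U_\lambda
$$
supplied by the theorem yields
$$
\dim\, (H_\Q^{\otimes 2k})^{\mathrm{Sp}} = \sum_{|\lambda|=k,\ h(\lambda)\leq g} \dim\, U_\lambda.
$$
Then I would invoke the second assertion of the theorem, namely the $\mathfrak{S}_{2k}$-module isomorphism $U_\lambda \cong (\lambda^\delta)_{\mathfrak{S}_{2k}}$, which in particular implies equality of dimensions. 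Substituting $\dim\, U_\lambda = \dim\, (\lambda^\delta)_{\mathfrak{S}_{2k}}$ produces the asserted formula. There is no serious obstacle; the content of the corollary is already encoded in Theorem~\ref{thm:ortho}, and extracting it is a purely formal matter.

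The only step worth performing as a sanity check is the specialization to the stable range $g\geq k$. In that range the constraint $h(\lambda)\leq g$ is vacuous, since any Young diagram with $k$ boxes has at most $k$ rows. The dimension of $(H_\Q^{\otimes 2k})^{\mathrm{Sp}}$ is classically equal to the double factorial $(2k-1)!! = 1\cdot 3\cdot 5\cdots(2k-1)$, the number of perfect matchings (equivalently, linear chord diagrams) on $2k$ vertices. Thus the corollary predicts the combinatorial identity
$$
\sum_{|\lambda|=k} \dim\, (\lambda^\delta)_{\mathfrak{S}_{2k}} = (2k-1)!!,
$$
and confirming this identity independently, either by direct tabulation for small $k$ or by recognizing the sum as the dimension of a well-understood $\mathfrak{S}_{2k}$-representation, would be a reassuring consistency check that the indexing of the irreducible pieces $U_\lambda$ by $\lambda^\delta$ has been set up correctly.
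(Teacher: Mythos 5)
Your proposal is correct and is essentially the paper's own (implicit) argument: the corollary follows immediately from Theorem~\ref{thm:ortho} by taking dimensions in the orthogonal decomposition and using $U_\lambda\cong(\lambda^\delta)_{\mathfrak{S}_{2k}}$, which is why the paper states it without a separate proof. Your stable-range sanity check $\sum_{|\lambda|=k}\dim\,(\lambda^\delta)_{\mathfrak{S}_{2k}}=(2k-1)!!$ is also consistent with the paper, since $\dim\,(\lambda^\delta)_{\mathfrak{S}_{2k}}=\dim\,(2\lambda')_{\mathfrak{S}_{2k}}$ and the $E_\mu\cong(2\mu)_{\mathfrak{S}_{2k}}$ decompose $\Q\mathcal{D}^\ell(2k)$, whose dimension is $(2k-1)!!$.
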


\begin{remark}
Recall here that there are a number of explicit formulas for the dimension of irreducible representations of
symmetric groups, e.g. the hook length formula, the number of standard tableaux on the
Young diagram and the formula of Frobenius (cf \cite{fh}). In particular
$$
\dim\, U_{[k]}=\dim\,[k^2]_{\mathfrak{S}_{2k}}=\frac{1}{k+1}\binom{2k}{k}\quad (\text{the $k$-th Catalan number})
$$
is equal to $\dim\, (H_\Q^{\otimes 2k})^{\mathrm{Sp}}$ for the case $g=1$.
We mention that, in this case,  a basis of this space was given
in a classical paper \cite{RTW} by Rumer, Teller and Weyl.
Furthermore Mihailovs \cite{mihailovs} extended this work to obtain a nice
basis of $(H_\Q^{\otimes 2k})^{\mathrm{Sp}}$
for any $g$.
However, their basis is different from ours because it
depends essentially on the choice of ordering of tensors so that it is not canonical.
\end{remark}

A {\it linear chord diagram} with $2k$ vertices is a partition of the set
$\{1,2,\cdots,2k\}$ into $k$-tuple 
$$
C=\{\{i_1,j_1\},\cdots,\{i_k,j_k\}\}\quad 
$$
of pairs $\{i_\ell,j_\ell\}\ (\ell=1,\cdots,k)$. Here we assume 
$$
i_1<\cdots<i_k, \ i_\ell<j_\ell\ (\ell=1,\cdots,k)
$$
so that the above expression is uniquely determined.
We consider $C$ to be a graph with $2k$ vertices and $k$ edges (chords)
each of which connects $i_\ell$ with $j_\ell$ for $\ell=1,\cdots,k$.
Let
$
\mathcal{D}^{\ell}(2k)
$
denote the set of all the linear chord diagrams with $2k$ vertices.
It has $(2k-1)!!$ elements and let $\Q\mathcal{D}^{\ell}(2k)$ be the vector
space over $\Q$ spanned by $\mathcal{D}^{\ell}(2k)$.
The symmetric group $\mathfrak{S}_{2k}$ acts on $\mathcal{D}^{\ell}(2k)$ naturally 
so that $\Q\mathcal{D}^{\ell}(2k)$ has a structure of an $\mathfrak{S}_{2k}$-module.

We define a symmetric bilinear mapping
$$
\langle\ \, ,\ \rangle: \Q\mathcal{D}^{\ell}(2k)\times \Q\mathcal{D}^{\ell}(2k)\rightarrow\Q[g]
$$
by setting
$$
\langle C, C' \rangle= (-1)^{k-r} (2g)^r
$$
for any two elements $C, C'\in \mathcal{D}^\ell(2k)$, where $r$ denotes the number
of connected components of the graph $C\cup C'$
(here we consider interiors of $2k$ edges to be mutually {\it disjoint}), and extending this bilinearly. 
If we let $g$  to take a specific value, then the above can be understood 
to be a symmetric bilinear pairing. It is clear that this pairing
is $\mathfrak{S}_{2k}$-invariant. Namely the equality
$$
\langle \gamma(C), \gamma(C')\rangle=\langle C, C'\rangle\quad (C,C'\in \mathcal{D}^{\ell}(2k))
$$
holds for any $\gamma\in\mathfrak{S}_{2k}$. Let $M_{2k}$ denote
the corresponding intersection matrix. In other words, it is the matrix
which represents the linear mapping 
$$
\mathcal{M}: \Q\mathcal{D}^{\ell}(2k)\rightarrow\Q\mathcal{D}^{\ell}(2k)
$$
defined as 
$$
\mathcal{M}(C)=\sum_{D\in\mathcal{D}^\ell(2k)} \langle C,D\rangle \, D.
$$
\begin{thm}
The intersection matrix $M_{2k}$ is positive semi-definite for any $g$,
so that it defines a metric on
$\Q\mathcal{D}^{\ell}(2k)$ depending on $g$.
Furthermore, there exists an orthogonal
direct sum decomposition
$$
\Q\mathcal{D}^{\ell}(2k)\cong \bigoplus_{|\lambda |=k}
E_{\lambda}
$$
in terms of eigenspaces $E_\lambda$ of $\mathcal{M}$ which
are defined independently of the genus $g$.
The eigenvalue of $E_\lambda$ , denoted by $\mu_\lambda$, is given by
the following formula
$$
\mu_\lambda=\prod_{\text{b: box of $\lambda$}} (2g-2s_b+t_b)
$$
where $s_b$ denotes the number of columns of $\lambda$ 
which are on the left of the column containing $b$ and  
$t_b$ denotes the number of rows which are above the 
row containing $b$.

With respect to the natural action of $\mathfrak{S}_{2k}$
on $\Q\mathcal{D}^{\ell}(2k)$, $E_\lambda$
is an irreducible $\mathfrak{S}_{2k}$-submodule and we have an isomorphism
$$
E_\lambda\cong (2\lambda)_{\mathfrak{S}_{2k}}.
$$
In other words, the above orthogonal
direct sum decomposition gives also the irreducible decomposition of the
$\mathfrak{S}_{2k}$-module $\Q\mathcal{D}^{\ell}(2k)$.

\label{thm:lcd}
\end{thm}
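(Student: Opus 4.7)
The plan is to turn the spectral problem into a representation-theoretic one by exploiting the $\mathfrak{S}_{2k}$-structure. The symmetric group acts transitively on $\mathcal{D}^{\ell}(2k)$, with stabilizer of any fixed chord diagram isomorphic to the hyperoctahedral group $B_k=\mathfrak{S}_2\wr\mathfrak{S}_k$ (permute the $k$ chords, and independently swap each pair of endpoints). Hence as $\mathfrak{S}_{2k}$-modules
$$
\Q\mathcal{D}^{\ell}(2k)\cong\mathrm{Ind}_{B_k}^{\mathfrak{S}_{2k}}\mathbf{1},
$$
and a classical identity (cf. Macdonald, \emph{Symmetric Functions and Hall Polynomials}, Example I.5.4) decomposes this induced module multiplicity-freely as $\bigoplus_{|\lambda|=k}(2\lambda)_{\mathfrak{S}_{2k}}$. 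I would define $E_\lambda$ to be the corresponding isotypic summand; this already settles the irreducible decomposition claimed in the theorem.

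Since the bilinear form is $\mathfrak{S}_{2k}$-invariant, the operator $\mathcal{M}$ commutes with the $\mathfrak{S}_{2k}$-action, and multiplicity-freeness combined with Schur's lemma forces $\mathcal{M}$ to act on each $E_\lambda$ as a scalar $\mu_\lambda\in\Q[g]$. The same invariance shows that any $\mathfrak{S}_{2k}$-invariant bilinear pairing between non-isomorphic irreducibles must vanish, so the $E_\lambda$ are automatically mutually orthogonal. This reduces the entire theorem to computing the scalars $\mu_\lambda$.

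The real work is establishing the product-over-boxes formula. My preferred route is induction on $k$ via branching. Fix the chord $\{2k-1,2k\}$, embed $\Q\mathcal{D}^{\ell}(2k-2)\hookrightarrow\Q\mathcal{D}^{\ell}(2k)$ by appending this chord, and match the restriction $\mathfrak{S}_{2k}\supset\mathfrak{S}_{2k-2}\times\mathfrak{S}_2$ with the Pieri-type passage $(2\lambda^-)_{\mathfrak{S}_{2k-2}}\nearrow(2\lambda)_{\mathfrak{S}_{2k}}$ for attaching a horizontal pair of boxes. Analyzing separately those $D$ whose chord at $2k$ stays within the appended pair and those whose chord at $2k$ links to a pre-existing vertex should extract a one-step recursion $\mu_\lambda=(2g-2s_b+t_b)\,\mu_{\lambda^-}$ where $b$ is the newly added box, and iterating from the base case $k=1$ (with $\mu_{[1]}=2g$) recovers the full product. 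A more conceptual alternative is to recognise $\mathcal{M}$ as the action of a distinguished central element of the Brauer algebra $B_k(2g)$ on its regular module; the spectra of such elements on simple modules (Hanlon--Wales, Nazarov) are known to take exactly this product-over-boxes shape.

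Positive semi-definiteness then falls out of the formula itself: when $\lambda_1\leq g$ every factor $2g-2s_b+t_b$ is strictly positive (since $s_b\leq\lambda_1-1\leq g-1$ and $t_b\geq 0$), whereas if $\lambda_1\geq g+1$ the box at position $(1,g+1)$ contributes the factor $2g-2g+0=0$ and annihilates $\mu_\lambda$. Hence $\mu_\lambda\geq 0$ unconditionally. The genuine obstacle, as always, will be the eigenvalue computation itself: arranging the branching so that all correction terms collapse cleanly into the single factor $(2g-2s_b+t_b)$ will require either a carefully chosen family of eigenvectors built from Young symmetrizers applied to a seed diagram, or the full machinery of Brauer-algebra spectral theory.
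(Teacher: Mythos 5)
Your structural reductions are correct and coincide with the paper's own framework: the identification $\Q\mathcal{D}^\ell(2k)\cong\mathrm{Ind}_{H_k}^{\mathfrak{S}_{2k}}1_{H_k}$ and its multiplicity-free decomposition into the $(2\lambda)_{\mathfrak{S}_{2k}}$ is exactly the paper's Proposition \ref{prop:hk} (proved there by hand via double cosets, Frobenius reciprocity and Mackey's formula rather than cited from Macdonald); the argument that $\mathfrak{S}_{2k}$-equivariance of $\mathcal{M}$ plus multiplicity-freeness forces scalar action on each isotypic summand, and that invariance of the pairing forces orthogonality of non-isomorphic summands, matches Propositions \ref{prop:eigenspace} and \ref{prop:lmortho}; and your observation that the box in position $(1,g+1)$ contributes a zero factor whenever $\lambda_1\geq g+1$, so that no negative factor can ever survive in the product, correctly yields semi-definiteness \emph{once the product formula is known}.

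The gap is that the product formula itself --- the entire computational content of the theorem --- is never established. You acknowledge this explicitly, and the branching sketch as it stands does not close it: the embedding $\Q\mathcal{D}^\ell(2k-2)\hookrightarrow\Q\mathcal{D}^\ell(2k)$ by appending the chord $\{2k-1,2k\}$ is only $(\mathfrak{S}_{2k-2}\times\mathfrak{S}_2)$-equivariant, its image is not $\mathcal{M}$-invariant, and the image of $E_{\lambda^-}$ spreads over all the eigenspaces $E_\lambda$ with $\lambda=\lambda^-+b$ as $b$ ranges over the addable boxes; isolating the single component whose eigenvalue ratio is $(2g-2s_b+t_b)$ is precisely what requires an argument, and no mechanism for this is offered. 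The paper avoids branching altogether: knowing already that $E_\lambda$ is an eigenspace, it pairs the Young-symmetrizer vector $c'_{2\lambda}C_0$ against $C_0$ itself, so that $\langle c'_{2\lambda}C_0,C_0\rangle=m_\lambda\,\mu_\lambda$ where $m_\lambda$ is the coefficient of $C_0$ in $c'_{2\lambda}C_0$, and then computes both quantities explicitly by repeated use of Lemma \ref{lem:qij}, in three stages: a single row $\lambda=[k]$, a single column $\lambda=[1^k]$, and the general case by factoring $a_{2\lambda}$ along the rows of $2\lambda$ and $b_{2\lambda}$ along successive pairs of columns. Your fallback appeal to Brauer-algebra spectral theory is legitimate in spirit --- the paper's own remark concedes that the main ingredient is in Hanlon--Wales --- but that same remark notes their sign convention produces a \emph{different} intersection matrix, so quoting their eigenvalues without reconciling the signs would not by itself establish the stated formula for $\langle\ ,\ \rangle$, nor the semi-definiteness and orthogonality assertions that hinge on it.
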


\begin{remark}
It turns out that the main ingredient of the above theorem was already proved by Hanlon and Wales \cite{hw}
in a broader context of Brauer's
centralizer algebras. More precisely, they determined the eigenspaces and eigenvalues together with
the irreducible decomposition of $\Q\mathcal{D}^{\ell}(2k)$ as an
$\mathfrak{S}_{2k}$-module. 
However the properties of semi-positivity and orthogonality were not shown
partly because their sign convention is different from ours
so that the intersection matrix is not the same.
Here we would like to present our own proof for this part also 
which may hopefully be more geometric than
their original proof.
\end{remark}

\begin{thm}
The correspondence
$$
\lambda \mapsto \mu_\lambda
$$
defines an injective mapping from the set of all the Young diagrams
$\lambda$ with $k$ boxes into the space of polynomials in $g$ of degree $k$.
\end{thm}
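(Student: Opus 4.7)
The plan is to prove injectivity by showing that $\mu_\lambda \in \Q[g]$ determines, and is determined by, its multiset of roots, which in turn determines $\lambda$. The degree assertion itself is immediate: each of the $k=|\lambda|$ boxes contributes a linear factor $(2g-2s_b+t_b)$ with leading coefficient $2$, so $\mu_\lambda$ has degree exactly $k$ with leading coefficient $2^k$. Only injectivity requires real work.

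Writing $(i,j)$ for the box in row $i$ and column $j$, so that $s_b=j-1$ and $t_b=i-1$, the root multiset of $\mu_\lambda$ as a polynomial in $2g$ is
$$
M(\lambda)\;=\;\{\,2s_b-t_b : b\in\lambda\,\}\;=\;\bigsqcup_{i=1}^{h(\lambda)} R_i,\qquad R_i=\{\,1-i,\ 3-i,\ \ldots,\ 2\lambda_i-i-1\,\}.
$$
Each $R_i$ is an arithmetic progression of common difference $2$ with $\lambda_i$ terms, contributed by row $i$ of $\lambda$. Since $\mu_\lambda$ and $M(\lambda)$ determine each other, injectivity of $\lambda\mapsto\mu_\lambda$ reduces to injectivity of $\lambda\mapsto M(\lambda)$, which I plan to establish by a greedy row-by-row reconstruction.

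The key observation is that the row-maxima $\max R_i=2\lambda_i-i-1$ are strictly decreasing in $i$:
$$
\max R_i - \max R_{i+1}\;=\;2(\lambda_i-\lambda_{i+1})+1\;\geq\;1.
$$
Hence the global maximum of $M(\lambda)$ equals $\max R_1=2\lambda_1-2$, which determines $\lambda_1$ and hence the entire row-$1$ contribution $R_1=\{0,2,\ldots,2\lambda_1-2\}$. Removing $R_1$ from $M(\lambda)$ and iterating recovers $\lambda_2$ from the new maximum $2\lambda_2-3$, then $\lambda_3$, and so on; the procedure terminates precisely when the remaining multiset is empty, which pins down $h(\lambda)$. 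I do not anticipate any serious obstacle: the whole argument rests on the elementary strict monotonicity of the row-maxima above, after which the greedy extraction is transparent.
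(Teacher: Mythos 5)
Your proof is correct. It shares the paper's underlying philosophy---since $\mu_\lambda$ factors into the linear terms $2g-2s_b+t_b$, unique factorization in $\Q[g]$ reduces injectivity to reconstructing $\lambda$ from the multiset of these factors, which both you and the paper do by greedy peeling---but the peeling schemes genuinely differ. The paper reads off \emph{both} extremes at once: the largest factor $2g+h-1$ (bottom box of the first column) recovers the number of rows $h$, the smallest factor $2g-2\lambda_1+2$ (last box of the first row) recovers $\lambda_1$; it then divides out all factors coming from the first row and first column and recurses on the diagram $\lambda^{(1)}$ obtained by deleting both, which forces it to re-identify the new extremal factors $2g+h_2-3$ and $2g-2\lambda_2+3$ inside the truncated diagram at each stage (with the mild subtlety that the surviving boxes keep their $s_b,t_b$ computed in the original $\lambda$). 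You instead organize the roots, in the variable $2g$, row by row into the arithmetic progressions $R_i$ and use only maxima: the single inequality $\max R_i-\max R_{i+1}=2(\lambda_i-\lambda_{i+1})+1\geq 1$ lets you strip one row at a time, recovering $\lambda_1,\lambda_2,\dots$ in the standard order, with $h$ pinned down by termination. Your route is arguably cleaner---one monotonicity lemma replaces the repeated identification of largest and smallest factors in a shrinking diagram, and the greedy step is uniform. What the paper's hook-style peeling buys is that it exhibits the conjugate-side data ($h$, then $h_2$, etc.) explicitly alongside the row lengths, and in particular keeps the smallest factor $2g-2\lambda_1+2$ in the foreground, which is exactly the quantity the rest of the paper uses to decide when $\mu_\lambda(g)=0$ and hence how $(H_\Q^{\otimes 2k})^{\mathrm{Sp}}$ degenerates as $g$ decreases.
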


\begin{example}
The largest (resp. the smallest) eigenvalue occurs for $\lambda=[1^k]$
(resp. $\lambda=[k]$) and the corresponding eigenvalues are
$$
\mu_{[1^k]}=2g(2g+1)\cdots (2g+k-1), \quad
\mu_{[k]}=2g(2g-2)\cdots (2g-2k+2).
$$
\end{example}

The space $\Q\mathcal{D}^\ell (2k)$ is a universal model for
the symplectic invariant tensors because we can make the following construction.
First there exists a natural homomorphism
$$
\Phi: \Q\mathcal{D}^\ell (2k) \rightarrow (H_\Q^{\otimes 2k})^{\mathrm{Sp}}
$$
which is surjective for any $g$ and ``anti" $\mathfrak{S}_{2k}$-equivariant
(see Proposition \ref{prop:anti} for details).
In the dual setting, there exists also a natural homomorphism
$$
\mathcal{K}: H_\Q^{\otimes 2k}\rightarrow \Q\mathcal{D}^\ell (2k)
$$
which enumerate all the contractions and can detect $\mathrm{Sp}$-invariant
component of any given tensor
(see section $\S3$, in particular Proposition \ref{prop:dual}, for details). 
We use the same symbol $\mathcal{K}$ for its restriction
to the subspace $(H_\Q^{\otimes 2k})^{\mathrm{Sp}}$.

\begin{thm}
The following diagram is commutative
$$
\begin{CD}
\Q\mathcal{D}^\ell (2k)   @>{\mathcal{M}}>> \Q\mathcal{D}^\ell (2k) \\
@V{\Phi}VV @|\\
(H_\Q^{\otimes 2k})^{\mathrm{Sp}} @>{\mathcal{K}}>> \Q\mathcal{D}^\ell (2k)
\end{CD}
$$
and all the homomorphisms $\Phi, \mathcal{M}, \mathcal{K}$ are isomorphisms for $g\geq k$.
For any $\lambda$ with $|\lambda|=k$, $U_{\lambda}=\Phi(E_{\lambda'})$
where $\lambda'$ denotes the conjugate
Young diagram of $\lambda$.

Furthermore the subspaces $\mathcal{K}(U_\lambda)=E_{\lambda'}\subset \Q \mathcal{D}^\ell(2k)$ are mutually
orthogonal to each other with respect to 
the ordinary Euclidean metric on $\Q \mathcal{D}^\ell(2k)$ induced by taking 
$\mathcal{D}^\ell(2k)$ as an orthonormal basis. 
\label{thm:aalpha}
\end{thm}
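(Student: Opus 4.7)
My plan is to verify the four assertions in sequence. \textbf{Commutativity.} For a single chord diagram $C\in\mathcal{D}^\ell(2k)$, the invariant tensor $\Phi(C)$ is built by summing, over all assignments of symplectic dual pairs of basis vectors to the $k$ chords of $C$, the resulting decomposable tensors; applying $\mathcal{K}$ then evaluates, for each test diagram $D\in\mathcal{D}^\ell(2k)$, the complete contraction of $\Phi(C)$ dictated by the pairs of $D$. Unwinding the definitions, this contraction factorises over the connected components of the graph $C\cup D$: each closed loop contributes a trace of an identity over the $2g$-dimensional symplectic vector space, giving a factor $2g$, while each loop also picks up exactly one sign coming from the skew-symmetry of $\mu$, so that the total sign is $(-1)^{k-r}$ with $r$ the number of loops of $C\cup D$. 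The coefficient of $D$ in $\mathcal{K}(\Phi(C))$ therefore equals $(-1)^{k-r}(2g)^r=\langle C,D\rangle$, which is exactly the coefficient of $D$ in $\mathcal{M}(C)$.

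\textbf{Stable range.} By Theorem \ref{thm:lcd}, each factor of $\mu_\lambda=\prod_b(2g-2s_b+t_b)$ satisfies $2g-2s_b+t_b\ge 2g-2(k-1)\ge 2$ whenever $g\ge k$, so $\mathcal{M}$ is positive definite and in particular invertible. Since $\Phi$ is already known to be surjective, the factorisation $\mathcal{M}=\mathcal{K}\circ\Phi$ forces $\Phi$ to be injective, hence bijective, and then $\mathcal{K}=\mathcal{M}\circ\Phi^{-1}$ is also an isomorphism.

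\textbf{Identification and orthogonality.} The anti-$\mathfrak{S}_{2k}$-equivariance of $\Phi$ produces an equivariant surjection $E_{\lambda'}\otimes\mathrm{sgn}\twoheadrightarrow\Phi(E_{\lambda'})$, so as an $\mathfrak{S}_{2k}$-module $\Phi(E_{\lambda'})$ is either zero or isomorphic to $(2\lambda')_{\mathfrak{S}_{2k}}\otimes\mathrm{sgn}\cong\bigl((2\lambda')'\bigr)_{\mathfrak{S}_{2k}}$. A direct comparison of column lengths shows $(2\lambda')'=\lambda^\delta$, so $\Phi(E_{\lambda'})\cong(\lambda^\delta)_{\mathfrak{S}_{2k}}\cong U_\lambda$ whenever it is nonzero. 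Because the decomposition of $(H_\Q^{\otimes 2k})^{\mathrm{Sp}}$ in Theorem \ref{thm:ortho} is multiplicity-free, the isotypic components are uniquely determined as subspaces, so the isomorphism of types forces the equality $\Phi(E_{\lambda'})=U_\lambda$. From the commutative diagram, $\mathcal{K}(U_\lambda)=\mathcal{M}(E_{\lambda'})=\mu_{\lambda'}\,E_{\lambda'}$, and in the stable range the nonzero scalar $\mu_{\lambda'}$ is harmless, giving $\mathcal{K}(U_\lambda)=E_{\lambda'}$ as subspaces. For the final orthogonality, note that the Euclidean metric on $\Q\mathcal{D}^\ell(2k)$ is $\mathfrak{S}_{2k}$-invariant because $\mathfrak{S}_{2k}$ permutes the orthonormal basis $\mathcal{D}^\ell(2k)$; by Schur's lemma, distinct non-isomorphic irreducible $\mathfrak{S}_{2k}$-submodules of $\Q\mathcal{D}^\ell(2k)$ are orthogonal with respect to any invariant inner product, and since the $E_\lambda\cong(2\lambda)_{\mathfrak{S}_{2k}}$ are pairwise non-isomorphic as $\lambda$ varies, the required orthogonality follows.

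The main technical obstacle is the sign-and-loop bookkeeping in the commutativity step: one has to confirm that the factor $(2g)^r$ and the sign $(-1)^{k-r}$ emerge with the correct multiplicity from the trace over each loop and from the antisymmetry of $\mu$, and that this matches the sign convention built into $\langle\cdot,\cdot\rangle$ and into the anti-equivariance of $\Phi$. Once that is settled, every remaining assertion is a formal consequence of Theorems \ref{thm:ortho} and \ref{thm:lcd} combined with Schur's lemma and the combinatorial identity $(2\lambda')'=\lambda^\delta$.
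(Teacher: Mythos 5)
Your proposal is correct in substance and, at the level of architecture, runs parallel to the paper; but it contains one local error in the commutativity step that needs repair, and two places where it can be tightened.

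On the comparison: the paper's own proof is essentially a two-line citation. Commutativity and the behaviour of $\mathcal{K}$ are Proposition \ref{prop:dual} (whose content is the contraction computation of Proposition \ref{prop:lcdmu}, proved by iterating Lemma \ref{lem:qij}); bijectivity of $\Phi$ for $g\geq k$ is imported from Weyl's invariant theory via Proposition \ref{prop:anti}; the identity $U_\lambda=\Phi(E_{\lambda'})$ is true by construction, since the proof of Theorem \ref{thm:ortho} \emph{defines} $U_\lambda$ as $\bar{\Phi}(E_{\lambda'})$; and the last claim follows from $\mathcal{K}(\Phi(\xi))=\mu_\lambda\,\xi$. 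You deviate in two genuinely different and attractive ways. First, you derive injectivity of $\Phi$ in the stable range from the eigenvalue formula of Theorem \ref{thm:lcd} (each factor $2g-2s_b+t_b\geq 2g-2k+2>0$ for $g\geq k$, so $\mathcal{M}$ is invertible and $\mathcal{M}=\mathcal{K}\circ\Phi$ forces $\mathop{\mathrm{Ker}}\Phi=0$), invoking Weyl only for surjectivity; this is more self-contained than quoting bijectivity outright. Second, treating Theorem \ref{thm:ortho} as a black box, you recover $U_\lambda=\Phi(E_{\lambda'})$ from anti-equivariance plus multiplicity-freeness ($\lambda\mapsto\lambda^\delta$ is injective, so isotypic subspaces are unique), a legitimate substitute for the paper's definitional identification; and your Schur-lemma argument for the Euclidean orthogonality is precisely the ``standard argument'' the paper alludes to in the remark following Proposition \ref{prop:eigenspace}.

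The error: your per-loop sign accounting is wrong as stated. A connected component of $C\cup D$ passing through $2m$ vertices (so containing $m$ chords of each diagram) contributes $(-1)^{m-1}\,2g$, not ``$2g$ together with exactly one sign'': for $k=1$ and $C=D=\{\{1,2\}\}$ the single loop gives $\mu^{\otimes 2}(\omega_0\otimes\omega_0)=+2g$, while for $C=\{\{1,2\},\{3,4\}\}$ and $D=\{\{1,3\},\{2,4\}\}$ the single $4$-cycle gives $-2g$. One sign per loop would yield a total sign $(-1)^r$, which is not the correct $(-1)^{\sum_j(m_j-1)}=(-1)^{k-r}$ that you nevertheless assert. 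Since you yourself flag this bookkeeping as the main obstacle, note that it is exactly the content of Proposition \ref{prop:lcdmu} (via Lemma \ref{lem:muom} and repeated use of Lemma \ref{lem:qij}), proved before the theorem, so the identity $\alpha_D(a_C)=\mu^{\otimes 2k}(a_C\otimes a_D)=\langle C,D\rangle$ may simply be cited. Two minor points: your restriction of $\mathcal{K}(U_\lambda)=E_{\lambda'}$ to the stable range is unnecessary, since $U_\lambda\neq\{0\}$ iff $h(\lambda)=\lambda'_1\leq g$ iff $\mu_{\lambda'}\neq 0$, so the same scalar argument works for every $g$; and in the identification step you should also note that $\Phi(E_{\lambda'})\neq\{0\}$ exactly when $h(\lambda)\leq g$ (e.g.\ because $\mu_{\lambda'}\neq0$ forces $\mathcal{K}(\Phi(E_{\lambda'}))=\mu_{\lambda'}E_{\lambda'}\neq\{0\}$, while for $h(\lambda)>g$ the type $\lambda^\delta$ does not occur in the target), which settles the vanishing case of $U_\lambda=\Phi(E_{\lambda'})$ as well.
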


This work was originally motivated by an attempt, begun some years ago, to extend
our former work \cite{morita03} on the structure of
the tautological algebra of the moduli space of curves. 
We present in $\S6$ 
an explicit set of relations among the Mumford-Morita-Miller
tautological classes by making use of the canonical metric introduced in this
paper (see Theorem \ref{thm:rel} and Conjecture \ref{conj:complete}). 
In a forthcoming joint paper \cite{mss5} with Sakasai and Suzuki,
which is a sequel to this paper,
we show a close relation between the structure of the tautological algebra 
of moduli space of curves and stability and
degeneration of certain plethysm of representations.

{\it Acknowledgement} The author would like to thank Carel Faber,
Richard Hain and Nariya Kawazumi for enlightening discussions
about the cohomology of the mapping class groups
on various occasions in the $1990$'s and early $2000$'s.
He also would like to thank Dan Petersen for informing him
about a paper \cite{hw} by Hanlon and Wales.
Thanks are also due to
Takuya Sakasai and Masaaki Suzuki
for their helps in recent years
which greatly encouraged the author to write up this paper.

\section{Space of linear chord diagrams and $(H^{\otimes 2k})^{\mathrm{Sp}}$}

For each linear chord diagram $C\in \mathcal{D}^\ell(2k)$, define
$$
a_C\in (H_\Q^{\otimes 2k})^{\mathrm{Sp}}
$$
by permuting the elements $(\omega_0)^{\otimes 2k}$ in such a way that the $s$-th part
$(\omega_0)_s$ of this tensor product goes to $(H_\Q)_{i_s}\otimes (H_\Q)_{j_s}$, where $(H_\Q)_{i}$
denotes the $i$-th component of $H_\Q^{\otimes 2k}$, and multiplied by the factor
$$
\mathrm{sgn}\, C=\mathrm{sgn}
\begin{pmatrix}
1 & 2 & \cdots & 2k-1 & 2k\\
i_1 & j_1 & \cdots & i_k & j_k
\end{pmatrix}
.
$$
Here $\omega_0\in (H_\Q^{\otimes 2})^{\mathrm{Sp}}$ denotes the symplectic class.

Now define a linear mapping
$$
\Phi: \Q\mathcal{D}^\ell (2k) \rightarrow (H_\Q^{\otimes 2k})^{\mathrm{Sp}}
$$
by setting $\Phi(C)=a_C$.

\begin{prop}
The correspondence 
$$
\Phi: \Q\mathcal{D}^\ell (2k) \rightarrow (H_\Q^{\otimes 2k})^{\mathrm{Sp}}
$$
is surjective for any $g$ and bijective for any $g\geq k$. Furthermore this correspondence
is ``anti" $\mathfrak{S}_{2k}$-equivariant in the sense that
$$
\Phi(\gamma (C))=\mathrm{sgn}\,\gamma\  \gamma(\Phi(C))
$$
for any $C\in \Q\mathcal{D}^\ell (2k)$ and $\gamma\in\mathfrak{S}_{2k}$.
\label{prop:anti}
\end{prop}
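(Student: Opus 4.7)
The plan is to separate the three claims. The anti-equivariance is a direct calculation from the antisymmetry of $\omega_0$; surjectivity invokes the first fundamental theorem of invariant theory for $\mathrm{Sp}(2g,\Q)$; and bijectivity for $g\geq k$ follows by comparing dimensions.

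I would begin with the anti-equivariance. Let $\sigma_C\in\mathfrak{S}_{2k}$ denote the permutation sending $2s-1\mapsto i_s$ and $2s\mapsto j_s$, so that $\mathrm{sgn}\,\sigma_C=\mathrm{sgn}\,C$ by construction and
$$
a_C=\mathrm{sgn}(\sigma_C)\,\sigma_C(\omega_0^{\otimes k}).
$$
The key observation is that $\omega_0^{\otimes k}$ is a relative invariant under the hyperoctahedral subgroup $B_k=(\Z/2)^k\rtimes\mathfrak{S}_k\subset\mathfrak{S}_{2k}$, generated by the transpositions $(2s-1,2s)$ (flipping within a pair) and the permutations swapping two entire pairs $(2s-1,2s)$ and $(2t-1,2t)$. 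A flip swaps the two factors of some copy of the skew tensor $\omega_0$, contributing $-1$, whereas a pair-swap permutes identical factors of $\omega_0^{\otimes k}$ and contributes $+1$. Crucially, this sign coincides with the sign of the same element viewed inside $\mathfrak{S}_{2k}$, because a flip is a single transposition while a pair-swap is a product of two transpositions. Hence $b(\omega_0^{\otimes k})=\mathrm{sgn}(b)\,\omega_0^{\otimes k}$ for every $b\in B_k$. For $\gamma\in\mathfrak{S}_{2k}$, both $\gamma\sigma_C$ and $\sigma_{\gamma(C)}$ send the standard pairing $\{(1,2),(3,4),\ldots\}$ to the unordered pair partition underlying $\gamma(C)$, so $b:=\sigma_{\gamma(C)}^{-1}\gamma\sigma_C$ lies in $B_k$. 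Substituting $\mathrm{sgn}(b)=\mathrm{sgn}(\sigma_{\gamma(C)})\mathrm{sgn}(\gamma)\mathrm{sgn}(\sigma_C)$ into $\gamma\cdot a_C=\mathrm{sgn}(\sigma_C)\,\mathrm{sgn}(b)\,\mathrm{sgn}(\sigma_{\gamma(C)})\,a_{\gamma(C)}$ collapses to the claimed identity $\gamma\cdot a_C=\mathrm{sgn}(\gamma)\,a_{\gamma(C)}$.

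Next, surjectivity is the content of the first fundamental theorem of invariant theory for the symplectic group: every $\mathrm{Sp}(2g,\Q)$-invariant in $H_\Q^{\otimes 2k}$ is a linear combination of tensors obtained by distributing copies of $\omega_0$ across a pair partition of $\{1,\ldots,2k\}$. Up to sign these are exactly the $a_C$, so the image of $\Phi$ exhausts $(H_\Q^{\otimes 2k})^{\mathrm{Sp}}$ for every $g$. Bijectivity for $g\geq k$ then follows by comparing dimensions: $\dim\Q\mathcal{D}^\ell(2k)=(2k-1)!!$, and in the stable range the target also has dimension $(2k-1)!!$ (by the Weyl character formula, or equivalently by the second fundamental theorem, which asserts that no relations among Brauer diagrams appear until $g<k$). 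A surjection between finite-dimensional spaces of equal dimension is an isomorphism.

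The main technical obstacle is the sign bookkeeping in the first step. One must verify that the \emph{flip-versus-pair-swap} sign in $B_k$, which arises naturally from the antisymmetry of $\omega_0$, agrees with the sign of the same element viewed in $\mathfrak{S}_{2k}$, and then propagate this signed identity cleanly through the conjugation $b=\sigma_{\gamma(C)}^{-1}\gamma\sigma_C$. Once this small lemma is isolated, the remainder of the argument amounts to bookkeeping together with standard invariant theory.
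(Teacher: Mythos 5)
Your proposal is correct and follows essentially the same route as the paper, whose proof is only a two-line sketch citing ``a classical result of Weyl'' for surjectivity and asserting that the anti-equivariance follows ``by comparing how the signs of permutations act''; your hyperoctahedral-subgroup lemma (flips give $-1$ on $\omega_0^{\otimes k}$ and in $\mathfrak{S}_{2k}$, pair-swaps give $+1$ in both, hence $b(\omega_0^{\otimes k})=\mathrm{sgn}(b)\,\omega_0^{\otimes k}$ for $b\in B_k$) together with the conjugation $b=\sigma_{\gamma(C)}^{-1}\gamma\sigma_C$ is exactly the sign comparison the paper leaves implicit, and your appeal to the first and second fundamental theorems supplies the surjectivity and the stable-range bijectivity just as the paper intends. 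In short, you have correctly filled in the details the paper omits, with no gaps.
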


\begin{proof}
The surjectivity follows from a classical result of Weyl on symplectic invariants.
The latter part can be shown by comparing  how the signs of permutations act
on the symplectic invariant tensors and the linear chord diagrams.
\end{proof}

\begin{prop}
The following diagram is commutative
$$
\begin{CD}
\Q\mathcal{D}^\ell (2k) \otimes \Q\mathcal{D}^\ell (2k)  @>{\langle\ ,\ \rangle}>> \Q[g]\\
@V{\Phi\otimes\Phi}VV @VV{\text{evaluation}}V\\
(H_\Q^{\otimes 2k})^{\mathrm{Sp}}\otimes (H_\Q^{\otimes 2k})^{\mathrm{Sp}} @>{\mu^{\otimes 2k}}>> \Q.
\end{CD}
$$
\label{prop:lcdmu}
\end{prop}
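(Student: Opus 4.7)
The plan is to unwind the definition $a_C = \mathrm{sgn}(C)\,\sigma_C(\omega_0^{\otimes k})$ and compute $\mu^{\otimes 2k}(a_C, a_{C'})$ by direct expansion in a symplectic basis $\{e_i, f_i\}_{i=1}^g$, where
$$
\omega_0 = \sum_{i=1}^g (e_i \otimes f_i - f_i \otimes e_i).
$$
Each $a_C$ then becomes a signed sum over labelings that assign to every chord $c$ of $C$ an index $\ell(c) \in \{1,\dots,g\}$ and an $\{e,f\}$-type at the smaller endpoint (the larger endpoint carrying the partner type), with coefficient $\mathrm{sgn}(C)\prod_c \epsilon(c)$, where $\epsilon(e) = +1$ and $\epsilon(f) = -1$. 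A useful ingredient throughout is the diagonal $\mathfrak{S}_{2k}$-invariance of $\mu^{\otimes 2k}$.

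Expanding $\mu^{\otimes 2k}(a_C, a_{C'})$ as a double sum over labelings, the relations $\mu(e_i, f_j) = \delta_{ij} = -\mu(f_j, e_i)$ together with $\mu(e_i, e_j) = \mu(f_i, f_j) = 0$ force only compatible labelings to survive: at each of the $2k$ vertices the two labels coming from $C$ and $C'$ must be of opposite $e/f$-type and carry the same index. Since $C \cup C'$ is $2$-regular — one $C$-chord and one $C'$-chord at each vertex — it decomposes into $r$ disjoint cycles. Around each cycle, the constraints force a common index along its vertices ($g$ choices per cycle) and admit exactly two alternation patterns of $e,f$ ($2$ choices per cycle), giving $(2g)^r$ surviving labelings in total, which already matches $|\langle C, C'\rangle|$.

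The principal obstacle is the sign computation: showing that every surviving labeling carries the same sign $(-1)^{k-r}$. This total sign is the product of $\mathrm{sgn}(C)\mathrm{sgn}(C')$, the local signs $\prod \epsilon(c)$ from the $\omega_0$-expansions on the $2k$ chords, and the $\pm 1$ factors from each of the $2k$ contractions $\mu(\cdot,\cdot)$. A clean approach is to reduce, via $\mathfrak{S}_{2k}$-equivariance, to the canonical case $C = C_0 := \{\{1,2\},\dots,\{2k-1,2k\}\}$ for which $\sigma_{C_0}$ is the identity and $\mathrm{sgn}(C_0) = +1$, and to evaluate $\mu^{\otimes 2k}(\omega_0^{\otimes k}, \sigma_{C'}(\omega_0^{\otimes k}))$ by iterating the two contraction identities
$$
(\mathrm{id}\otimes\mu)(\omega_0\otimes v) = v, \qquad (\mu\otimes\mathrm{id})(v\otimes\omega_0) = -v,
$$
which collapse each cycle of $C_0 \cup C'$ to a scalar trace $\mathrm{tr}(\mathrm{id}_H) = 2g$. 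The sign acquired during the collapse, together with the skew-symmetry $T(\omega_0) = -\omega_0$ (where $T$ swaps the two tensor factors), should cancel exactly with $\mathrm{sgn}(C')$ to leave the uniform factor $(-1)^{k-r}$ appearing in $\langle C, C'\rangle$. Small-$k$ sanity checks — the diagonal case $C = C' = C_0$, giving $\mu^{\otimes 2}(\omega_0, \omega_0)^k = (2g)^k$, and the three diagrams in $\mathcal{D}^\ell(4)$ — all confirm the prediction. Multiplied by the $(2g)^r$ count, this gives
$$
\mu^{\otimes 2k}(a_C, a_{C'}) = (-1)^{k-r}(2g)^r = \langle C, C'\rangle,
$$
which is the commutativity of the diagram.
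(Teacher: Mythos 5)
Your expansion in a symplectic basis correctly accounts for the \emph{magnitude}: the compatibility constraints at the $2k$ vertices do confine the surviving terms to the cycles of the $2$-regular graph $C\cup C'$, with $g$ index choices and $2$ alternation patterns per cycle, giving $(2g)^r$; and the reduction to $C=C_0$ is legitimate, since the diagonal invariance of $\mu^{\otimes 2k}$ combines with the anti-equivariance of $\Phi$ (Proposition \ref{prop:anti}) so that the two factors of $\mathrm{sgn}\,\gamma$ cancel. But there is a genuine gap exactly at what you yourself call ``the principal obstacle'': the uniform sign $(-1)^{k-r}$ is asserted (``should cancel exactly with $\mathrm{sgn}(C')$''), not proved. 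The difficulty is that $\mathrm{sgn}(C')$ is a single global permutation sign, and it is not automatic that it factors as a product over the cycles of $C_0\cup C'$ in the way your cycle-by-cycle collapse requires; proving that factorization, and that a cycle containing $m$ chords of each diagram then contributes exactly $(-1)^{m-1}(2g)$, is the entire content of the statement --- it is also what the positive definiteness in Theorem \ref{thm:ortho} hinges on, so it cannot be waved through. Small-$k$ checks do not substitute, and the two one-leg contraction identities you invoke have signs that depend on slot and orientation conventions your sketch never pins down, so the bookkeeping along a cycle is not yet well defined, let alone carried out.

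For comparison, the paper localizes precisely this sign analysis in Lemma \ref{lem:qij} (imported from \cite{morita03}): contracting $a_C\otimes a_{C'}$ against one chord $\{i_\ell,j_\ell\}$ of $C$ is, via Lemma \ref{lem:muom}(ii), the operator $p_{i_\ell j_\ell}$ applied to $a_{C'}$, and the lemma asserts the clean local outcome $p_{ij}(a_{C'})=2g\,a_{C'}$ if $\{i,j\}\in C'$ and $-a_{C''}$ otherwise, with \emph{all} sgn factors already absorbed. The proof then inducts over the $k$ chords of $C$, observing that each $-1$ step increases the number of connected components of $C\cup C'_\ell$ by one, so the factor $-1$ occurs exactly $k-r$ times and $2g$ exactly $r$ times. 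To complete your argument you would need to prove the analogue of Lemma \ref{lem:qij} (or, equivalently, the per-cycle factorization of $\mathrm{sgn}(C')$) from your basis expansion; once that local lemma is in hand, your cycle-collapse and the paper's chord-by-chord induction are essentially the same proof.
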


To prove this, we prepare a few facts.

\begin{lem}
Let $x_1,\cdots,x_g,y_1,\cdots,y_g$ be a symplectic basis of $H_\Q$ and let
$$
\omega_0=\sum_{i=1}^g \{x_i\otimes y_i-y_i\otimes x_i\}\in (H_\Q^{\otimes 2})^{\mathrm{Sp}}
$$
be the symplectic class.

$\mathrm{(i)}\  \text{For any two elements $u,v\in H_\Q$, we have  the identity}$
$$
\mu(u,v)=\sum_{i=1}^g \{\mu(u,x_i)\mu(v,y_i)-\mu(u,y_i)\mu(v,x_i)\}.
$$ 

$\mathrm{(ii)}\  \text{For any two elements $u,v\in H_\Q$, we have  the identity}$
$$
\mu(u,v)=\mu^{\otimes 2}(u\otimes v,\omega_0).
$$ 
\label{lem:muom}
\end{lem}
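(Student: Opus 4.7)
The plan is to verify both identities by direct computation in the symplectic basis $\{x_1,\ldots,x_g,y_1,\ldots,y_g\}$. For part (i), I would write $u=\sum_i (a_i x_i+b_i y_i)$ and $v=\sum_i (c_i x_i+d_i y_i)$. The basic pairings $\mu(x_i,y_j)=\delta_{ij}$, $\mu(y_i,x_j)=-\delta_{ij}$, and $\mu(x_i,x_j)=\mu(y_i,y_j)=0$ immediately give $\mu(u,x_i)=-b_i$, $\mu(u,y_i)=a_i$, and similarly for $v$. The right hand side then collapses to $\sum_i(a_i d_i - b_i c_i)$, which is precisely $\mu(u,v)$ computed directly in the same basis. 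Thus (i) is an identity in the coordinates, and since both sides are bilinear in $u,v$, this suffices.

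For part (ii), I would substitute the definition of $\omega_0$ into the right hand side. Since the bilinear form $\mu^{\otimes 2}$ factors across tensor products as $\mu^{\otimes 2}(u\otimes v, p\otimes q)=\mu(u,p)\mu(v,q)$, this yields
\[
\mu^{\otimes 2}(u\otimes v,\omega_0)=\sum_{i=1}^g \bigl(\mu(u,x_i)\mu(v,y_i)-\mu(u,y_i)\mu(v,x_i)\bigr),
\]
and part (i) identifies this sum with $\mu(u,v)$. Hence (ii) is a formal consequence of (i) together with the defining formula for the symplectic class.

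There is no substantial obstacle here; both statements are tautologies once a symplectic basis is fixed. The only point requiring any care is the bookkeeping of signs coming from the antisymmetry of $\mu$, which is exactly what is responsible for the minus signs appearing in both the formula of (i) and the definition of $\omega_0$. Conceptually, the content of the lemma is that $\omega_0\in (H_\Q^{\otimes 2})^{\mathrm{Sp}}$ is the element dual to $\mu\in (H_\Q^{\otimes 2})^*$ under the identification $H_\Q^*\cong H_\Q$ furnished by the symplectic form, and it is this reinterpretation of $\mu$ as a tensor paired via $\mu^{\otimes 2}$ that will be used in the proof of Proposition \ref{prop:lcdmu} to translate the combinatorial pairing $\langle\ ,\ \rangle$ on chord diagrams into the tensorial pairing $\mu^{\otimes 2k}$.
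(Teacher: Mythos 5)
Your proof is correct and matches the paper's approach: the paper simply remarks that (i) is easy to verify and that (ii) is a restatement of (i) via $\mu^{\otimes 2}$, which is exactly the direct coordinate computation and substitution you carried out. Your sign bookkeeping ($\mu(u,x_i)=-b_i$, $\mu(u,y_i)=a_i$) is accurate, and your closing remark correctly identifies the role of the lemma in the proof of Proposition~\ref{prop:lcdmu}.
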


\begin{proof}
It is easy to verify (i) while (ii) is just a restatement of (i) in terms of the linear mapping
$$
\mu^{\otimes 2}: H_\Q^{\otimes 2}\otimes H_\Q^{\otimes 2}\rightarrow \Q.
$$
\end{proof}

\begin{lem}[Lemma 3.3 of \cite{morita03}]
For any two indices $i, j$ with $1\leq i < j \leq 2k$, let
$$
p_{ij}:H_\Q^{\otimes 2k} \rightarrow H_\Q^{\otimes 2k}
$$
be the linear map defined by first taking the intersection number of the $i$-th entry with
the $j$-th entry and then putting the element $\omega_0$ there. Namely we set
\begin{align*}
p_{ij}(u_1\otimes\cdots \otimes u_i\otimes \cdots& \otimes u_j\otimes \cdots\otimes u_{2k})=\\
\mu(u_i,u_j) \sum_{s=1}^g& \{u_1\otimes\cdots \otimes x_s\otimes \cdots \otimes y_s\otimes \cdots\otimes u_{2k})\\
&- u_1\otimes\cdots \otimes y_s\otimes \cdots \otimes x_s\otimes \cdots\otimes u_{2k}\}.
\end{align*}
Then we have
$$
p_{ij}(a_C)=
\begin{cases}
2g\ a_C \quad \{i,j\}\in C\\
- a_{C'} \quad \{i,j\}\notin C
\end{cases}
$$
where $C'\in\mathcal{D}^\ell (2k)$ is the linear chord diagram defined as follows. 
Let $i', j'$ be the
indices such that $\{i, j'\}, \{i', j\}\in C$. Then
$$
C'=C\setminus \{\{i, j'\}, \{i', j\}\} \cup \{\{i, j\}, \{i', j'\}\}.
$$
\label{lem:qij}
\end{lem}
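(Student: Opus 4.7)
The plan is to reduce to a local computation on the two slots touched by $p_{ij}$, then finish with a careful sign reconciliation between $\mathrm{sgn}(C)$ and $\mathrm{sgn}(C')$. Writing $a_C=\mathrm{sgn}(C)\,\sigma_C((\omega_0)^{\otimes k})$, where $\sigma_C$ is the permutation placing the $s$-th copy of $\omega_0$ at positions $(i_s,j_s)$, I observe that $p_{ij}$ acts only on slots $i$ and $j$: all copies of $\omega_0$ placed at chords disjoint from $\{i,j\}$ pass through $p_{ij}$ unchanged and contribute the same ``background'' tensor on both sides of the claimed equality. The problem thus reduces to computing the action of $p_{ij}$ on the one or two copies of $\omega_0$ incident to $\{i,j\}$.

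If $\{i,j\}\in C$, exactly one copy of $\omega_0=\sum_s(x_s\otimes y_s-y_s\otimes x_s)$ sits at $(i,j)$. For each $s$ the summand $x_s\otimes y_s$ contributes $\mu(x_s,y_s)=+1$ with the outer coefficient $+1$ from $\omega_0$, while $-y_s\otimes x_s$ contributes $\mu(y_s,x_s)=-1$ with outer coefficient $-1$; both yield $+1$. Summing over $s=1,\dots,g$ gives $2g$, and since $p_{ij}$ then reinserts $\omega_0$ at $(i,j)$, I recover $2g\,a_C$.

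If $\{i,j\}\notin C$, two distinct copies of $\omega_0$ sit at the chords $\{i,j'\}$ and $\{i',j\}$ (each with the smaller endpoint first). Expanding both in the symplectic basis and computing $\mu(u_i,u_j)$, all cross-terms vanish except those matching an $x$ with a $y$ on matching indices $s=t$, leaving precisely two surviving summands. Tracking the signs from the two $\omega_0$-expansions against $\mu(x_s,y_s)=1,\ \mu(y_s,x_s)=-1$ shows that the leftover entries at slots $i',j'$ reassemble into $\omega_0$ placed at $(\min(i',j'),\max(i',j'))$ up to an overall factor of $\pm 1$, while $\omega_0$ is inserted at $(i,j)$ by the definition of $p_{ij}$. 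Thus, up to sign, the local tensor equals precisely what appears in $a_{C'}$.

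The final step is to compare $\mathrm{sgn}(C)$ with $\mathrm{sgn}(C')$. Reordering the chord list of $C'$ into canonical form (each pair increasing, pairs sorted by their smaller element) differs from the list of $C$ by a permutation supported on the four entries $i,j,i',j'$, whose sign partially compensates for the change of placement of the two $\omega_0$-factors. Combined with the sign produced in the previous paragraph, this yields $p_{ij}(a_C)=-a_{C'}$. The main obstacle is precisely this sign reconciliation, which must be checked across the few possible cyclic orderings of $\{i,i',j,j'\}$; in every case the three sources of signs (from $\omega_0$, from $\mu$, and from the permutations $\sigma_C,\sigma_{C'}$) combine to exactly the claimed $-1$, and verifying this uniformly is the principal technical step of the proof.
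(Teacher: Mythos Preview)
The paper does not actually prove this lemma: it is quoted verbatim as ``Lemma~3.3 of \cite{morita03}'' and is immediately followed by the proof of Proposition~\ref{prop:lcdmu}. So there is no proof in the present paper to compare your argument against.

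That said, your approach is the natural direct verification and is essentially correct. The observation that $a_C$ is well-defined independently of how one orders each chord (because swapping the two endpoints of a chord changes both $\mathrm{sgn}\,C$ and the placed $\omega_0$ by the same sign $-1$) is exactly what makes the ``local'' reduction legitimate: you may freely write the two relevant chords of $C$ as $(i,j')$ and $(j,i')$ with $i$ and $j$ in the first slot, and likewise write the chords of $C'$ as $(i,j)$ and $(j',i')$. With that choice the two defining permutations differ by a single transposition, so $\mathrm{sgn}\,C'=-\,\mathrm{sgn}\,C$, and the contraction computation you describe produces precisely $\omega_0$ at $(j',i')$ with coefficient $+1$. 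This pins down the overall sign as $-1$ without a case-by-case analysis of the cyclic order of $\{i,i',j,j'\}$.

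The one place your write-up is thin is exactly where you flag it: you assert that ``in every case the three sources of signs \dots\ combine to exactly the claimed $-1$'' but do not exhibit the cancellation. Since the well-definedness remark above collapses all the orderings to a single computation, I would recommend stating that remark explicitly and then doing the one remaining case; this replaces the promised case check by a two-line argument and removes the only soft spot in the proof.
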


\begin{proof}[Proof of Proposition $\ref{prop:lcdmu}$]
By the linearity, it is enough to prove the following. For any two 
linear chord diagrams $C,C'\in \mathcal{D}^\ell(2k)$, we have the identity
\begin{equation}
\mu^{\otimes 2k}(a_C\otimes a_{C'})=\langle C, C'\rangle =(-1)^{k-r} (2g)^r
\label{eq:id}
\end{equation}
where the right equality is the definition of the pairing 
$\langle\ , \ \rangle$ and $r$ denotes the number of connected components of the graph $C\cup C'$.

To prove the above equality \eqref{eq:id}, we decompose $\mu^{\otimes 2k}$ as
$$
\mu^{\otimes 2k}=\prod_{\ell=1}^k \mu_{i_\ell,j_\ell}^{\otimes 2}
$$
where $C=\{\{i_1,j_1\},\cdots,\{i_k,j_k\}\}$ as before and $\mu_{i_\ell,j_\ell}^{\otimes 2}(u\otimes v)$ 
$(u, v\in H_\Q^{\otimes 2k})$ denotes
taking the product of $\mu(u_{i_\ell},v_{i_\ell})$ and $\mu(u_{j_\ell},v_{j_\ell})$.
Now we consider the first chord $\{i_1,j_1\}$ of $C$ 
(actually $i_1=1$) and apply $\mu_{i_1,j_1}^{\otimes 2}$  on $a_C\otimes a_{C'}$. By Lemma \ref{lem:muom}
this can be calculated by applying the operator $p_{i_1j_1}$ on $a_{C'}$ and by Lemma \ref{lem:qij} the answer is
given as follows. Namely if $\{i_1,j_1\}\in C'$, then we obtain the number $\mu^{\otimes 2}(\omega_0\otimes\omega_0)=2g$
and the linear chord diagram $C'$ is unchanged so that $C'_1=C'$
while if $\{i_1,j_1\}\notin C'$, then we obtain the factor $-1$ and the new linear chord diagram $C'_1$ is 
the one obtained from $C'$ by replacing $\{i_1,j'_1\}, \{i'_1,j_1\}\in C'$ by $\{i_1,j_1\}, \{i'_1,j'_1\}$.
In the latter case, it is easy to observe that the number of connected components of the graph $C\cup C'_1$ 
increases by one than that of the graph $C\cup C'$ because one of the connected components of 
the latter graph is split into two components one of which corresponds to the common edge $\{i_1,j_1\}$
of the two graphs $C$ and $C'_1$. Next we consider the second chord $\{i_2,j_2\}$ of $C$ 
and apply $\mu_{i_2,j_2}^{\otimes 2}$  on $a_C\otimes a_{C'_1}$. By the same argument as above,
if $\{i_2,j_2\}\in C'_1$, then we obtain the number $2g$
and the linear chord diagram $C'_1$ is unchanged so that $C'_2=C'_1$
while if $\{i_2,j_2\}\notin C'_1$, then we obtain the factor $-1$ and the new linear chord diagram $C'_2$ is 
the one obtained from $C'_1$ by replacing $\{i_2,j'_2\}, \{i'_2,j_2\}\in C'_1$ by $\{i_2,j_2\}, \{i'_2,j'_2\}$.
In the latter case, the number of connected components of the graph $C\cup C'_2$ 
increases by one than that of the graph $C\cup C'_1$ because one of the connected components of 
the latter graph is split into two components one of which corresponds to the common edge $\{i_2,j_2\}$
of the two graphs $C$ and $C'_2$. Continuing in this way, we finally arrive at the last one $C'_k$ which 
coincides with $C$ and along the way we obtain the factor $(2g)$ $r$-times while the factor $(-1)$ 
$(k-r)$ times for the evaluation of $\mu^{\otimes 2k}$. This completes the proof.
\end{proof}

\begin{prop}
The irreducible decomposition of the $\mathfrak{S}_{2k}$-module $\Q\mathcal{D}^\ell (2k)$
is given by
$$
\Q\mathcal{D}^\ell (2k)=\bigoplus_{|\lambda|=k} E_\lambda
$$
where $E_\lambda$ is isomorphic to $(2\lambda)_{\mathfrak{S}_{2k}}$.
\label{prop:E}
\end{prop}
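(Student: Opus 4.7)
The plan is to identify $\Q\mathcal{D}^\ell(2k)$ with a classical induced $\mathfrak{S}_{2k}$-representation and then invoke a well-known branching identity. First I would observe that $\mathfrak{S}_{2k}$ acts transitively on the set $\mathcal{D}^\ell(2k)$ of perfect matchings of $\{1,2,\ldots,2k\}$ by permuting vertex labels. Fixing the reference matching $C_0=\{\{1,2\},\{3,4\},\ldots,\{2k-1,2k\}\}$, its stabilizer is precisely the hyperoctahedral group $B_k:=(\mathfrak{S}_2)^k\rtimes\mathfrak{S}_k$ of order $2^k k!$, generated by the chord-flipping involutions $(2i{-}1,2i)$ together with the permutations that permute the $k$ chords as blocks. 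Hence there is an $\mathfrak{S}_{2k}$-equivariant isomorphism
$$
\Q\mathcal{D}^\ell(2k)\;\cong\;\Q[\mathfrak{S}_{2k}/B_k]\;\cong\;\mathrm{Ind}_{B_k}^{\mathfrak{S}_{2k}}\mathbf{1},
$$
with no sign twists involved, since the action on matchings is a pure permutation action.

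Next I would invoke the classical branching identity
$$
\mathrm{Ind}_{B_k}^{\mathfrak{S}_{2k}}\mathbf{1}\;\cong\;\bigoplus_{|\lambda|=k}(2\lambda)_{\mathfrak{S}_{2k}},
$$
which says that, among all irreducible $\mathfrak{S}_{2k}$-modules, exactly those indexed by partitions of $2k$ whose rows are all of even length occur in $\Q\mathcal{D}^\ell(2k)$, and each occurs with multiplicity one. On the level of Frobenius characteristics this is the identity $h_k[h_2]=\sum_{|\lambda|=k} s_{2\lambda}$, a well-known plethystic formula going back to Littlewood. The subspaces $E_\lambda$ of the statement may then be defined as the isotypic components of the induced module; the multiplicity-one part of the identity forces each $E_\lambda$ to be irreducible and isomorphic to $(2\lambda)_{\mathfrak{S}_{2k}}$, which is precisely the claim.

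If one prefers a fully self-contained derivation, the real work lies in the branching identity. The cleanest route is character-theoretic: compute the permutation character $\chi(\sigma)=\#\{C\in\mathcal{D}^\ell(2k):\sigma(C)=C\}$ as a function of the cycle type of $\sigma\in\mathfrak{S}_{2k}$ (only even-length cycles, or pairs of equal-length odd cycles, produce fixed matchings), then pair it with the irreducible characters of $\mathfrak{S}_{2k}$ via Frobenius reciprocity and the Murnaghan--Nakayama rule. Equivalently, one translates the computation into symmetric functions, observing that the Frobenius image of $\mathrm{Ind}_{B_k}^{\mathfrak{S}_{2k}}\mathbf{1}$ equals the plethysm $h_k[h_2]$ by virtue of the wreath-product structure $B_k=\mathfrak{S}_2\wr\mathfrak{S}_k$, and that this expands as $\sum_{|\lambda|=k} s_{2\lambda}$ by a standard manipulation. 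I expect this plethystic expansion (or its character-theoretic avatar) to be the main technical obstacle; everything else is essentially formal.
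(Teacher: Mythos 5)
Your proposal is correct, and its first half coincides exactly with the paper's proof: the paper likewise observes that $\mathfrak{S}_{2k}$ acts transitively on $\mathcal{D}^\ell(2k)$, identifies the stabilizer of $C_0$ as $H_k\cong(\mathfrak{S}_2)^k\rtimes\mathfrak{S}_k$ (your $B_k$), obtains $\Q\mathcal{D}^\ell(2k)\cong\mathrm{Ind}^{\mathfrak{S}_{2k}}_{H_k}1_{H_k}$, and reduces everything to the branching identity for this induced module. Where you genuinely diverge is in how that identity is established. You invoke it as classical --- Littlewood's plethysm $h_k[h_2]=\sum_{|\lambda|=k}s_{2\lambda}$, equivalently the multiplicity-free spherical decomposition for the Gelfand pair $(\mathfrak{S}_{2k},\mathfrak{S}_2\wr\mathfrak{S}_k)$ --- and this is legitimate: the paper itself remarks that the fact is elementary and citable (to Iwahori's book). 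The paper, however, deliberately gives a self-contained proof in three steps: it counts the double cosets, $\left|H_k\backslash\mathfrak{S}_{2k}/H_k\right|=p(k)$, by introducing the $C_0$-relative type of a chord diagram; it produces, for each $\lambda$ with $|\lambda|=k$, a nonzero $H_k$-invariant vector $\bigl(\sum_{\gamma\in H_k}\gamma\bigr)c_{2\lambda}\neq 0$ in $\Q[\mathfrak{S}_{2k}]$ by a coefficient-of-identity argument with Young symmetrizers; and it caps the total multiplicity of trivial $H_k$-summands at $p(k)$ via Mackey's restriction formula and Frobenius reciprocity. This bespoke route buys more than completeness: the Young-symmetrizer elements acting on chord diagrams and the $C_0$-relative bookkeeping it sets up are exactly the tools reused in \S4 to show the $E_\lambda$ are eigenspaces of $\mathcal{M}$ and to compute the eigenvalues $\mu_\lambda$, whereas your citation leaves that infrastructure to be built separately. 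One small caveat in your optional character-theoretic sketch: the parenthetical on fixed matchings is imprecise --- a matching fixed by $\sigma$ decomposes the cycles of $\sigma$ into even-length cycles folded onto themselves together with pairs of equal-length cycles of \emph{either} parity (not only odd pairs); the correct nonvanishing criterion is that the odd cycles pair off by length. Since this sketch is offered only as an alternative to the citation, the imprecision does not affect the validity of your main argument.
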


\begin{proof}
The symmetric group $\mathfrak{S}_{2k}$ acts on $\mathcal{D}^\ell (2k)$ naturally and
this action is clearly transitive. If we denote by $C_0\in \mathcal{D}^\ell (2k)$ the 
linear chord diagram 
$$
C_0=\{\{1,2\},\{3,4\},\cdots, \{2k-1,2k\}\}
$$
then the isotropy group of it
is the subgroup $H_k\subset \mathfrak{S}_{2k}$ 
consisting of permutations which fix  $C_0$ considered as an element
of the power set of $\{1,2,\cdots, 2k\}$. $H_k$ is isomorphic to 
$$
(\mathfrak{S}_2\times \overset{\text{$k$-times}}{\cdots}\times \mathfrak{S}_2)\rtimes\mathfrak{S}_{k}
$$
and we have a bijection
\begin{equation}
\mathfrak{S}_{2k}/H_k \cong \mathcal{D}^\ell (2k)
\label{eq:scd}
\end{equation}
from the set of left $H_k$-cosets of $\mathfrak{S}_{2k}$ to $\mathcal{D}^\ell (2k)$.
Note that the number of elements of both sets is equal to $(2k-1)!!$. It is easy to see that the
above bijection \eqref{eq:scd} is an isomorphism of $\mathfrak{S}_{2k}$-set.
As is well known, the $\mathfrak{S}_{2k}$-module $\mathfrak{S}_{2k}/H_k$
is isomorphic to the induced representation $\mathrm{Ind}^{\mathfrak{S}_{2k}}_{H_k} 1_{H_k}$,
where $1_{H_k}$ denotes the one dimensional trivial representation of $H_k$.
Hence the proof of the present proposition is reduced to the following one.
\end{proof}

\begin{prop}
Let $\mathfrak{S}_{2k}$ be the symmetric group of order $2k$ and let $H_k$ denote the subgroup
defined above. Then the irreducible decomposition of the induced representation $\mathrm{Ind}^{\mathfrak{S}_{2k}}_{H_k} 1_{H_k}$
is given by
$$
\mathrm{Ind}^{\mathfrak{S}_{2k}}_{H_k} 1_{H_k}\cong\bigoplus_{|\lambda|=k} (2\lambda)_{\mathfrak{S}_{2k}}.
$$
\label{prop:hk}
\end{prop}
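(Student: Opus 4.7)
The plan is to identify both sides of the claim via the Frobenius characteristic map $\mathrm{ch}$, which sends the irreducible $\mathfrak{S}_n$-module $\mu_{\mathfrak{S}_n}$ to the Schur function $s_\mu$, thereby reducing Proposition~\ref{prop:hk} to an identity of symmetric functions.

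First I would apply the classical formula (found in Macdonald's book, or verifiable by a direct cycle-index calculation) that the Frobenius characteristic of the trivially-induced representation from a wreath-product subgroup is a plethysm:
$$
\mathrm{ch}\bigl(\mathrm{Ind}^{\mathfrak{S}_{nk}}_{\mathfrak{S}_n \wr \mathfrak{S}_k}\, 1\bigr) \;=\; h_k[h_n],
$$
where $h_m$ denotes the complete homogeneous symmetric function and the brackets denote plethystic substitution. Specializing to $n=2$ and $H_k=\mathfrak{S}_2\wr\mathfrak{S}_k$, this gives $\mathrm{ch}(\mathrm{Ind}^{\mathfrak{S}_{2k}}_{H_k}1_{H_k})=h_k[h_2]$. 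The proposition is thereby reduced to the combinatorial assertion
$$
h_k[h_2] \;=\; \sum_{|\lambda|=k} s_{2\lambda}.
$$

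Next I would establish this Schur expansion by generating functions. Since $h_2=\sum_{i\leq j} x_i x_j$, the definition of plethysm yields $\sum_{k\geq 0} h_k[h_2](x) = \prod_{i\leq j}(1-x_ix_j)^{-1}$. Littlewood's classical identity expresses the same infinite product as $\sum_{\mu} s_\mu(x)$ summed over all partitions $\mu$ whose parts are all even. Since the partitions of $2k$ with all parts even are exactly the $2\lambda$ with $|\lambda|=k$, comparing homogeneous components of degree $2k$ produces the displayed formula above, and applying $\mathrm{ch}^{-1}$ finishes the proof.

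The most delicate step will be the plethysm identity for wreath-product induction. If one prefers to avoid citing it, the verification reduces to matching power-sum expansions: each element of $H_k$ is a pair $(\sigma,\epsilon)\in\mathfrak{S}_k\times\{\pm 1\}^k$, and each cycle of $\sigma$ of length $\ell$ contributes on $\{1,\ldots,2k\}$ either one cycle of length $2\ell$ (when the product of the $\epsilon$-signs around that cycle is $+1$) or two cycles of length $\ell$ (when that product is $-1$). Averaging the corresponding power-sum monomials over $H_k$ reproduces precisely the power-sum expansion of $h_k[h_2]$, after which Littlewood's identity closes the argument.
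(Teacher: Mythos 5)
Your proposal is correct, but it takes a genuinely different route from the paper. The paper argues entirely inside group representation theory: it identifies $\mathrm{Ind}^{\mathfrak{S}_{2k}}_{H_k} 1_{H_k}$ with the permutation module on linear chord diagrams, counts the double cosets $H_k\backslash\mathfrak{S}_{2k}/H_k$ as $p(k)$ via the $C_0$-relative type of a chord diagram, uses Frobenius reciprocity plus Mackey's restriction formula to show the total multiplicity of all irreducibles is exactly $p(k)$, and then exhibits, for each $\lambda$ with $|\lambda|=k$, a nonzero $H_k$-invariant vector $g_\lambda=\bigl(\sum_{\gamma\in H_k}\gamma\bigr)c_{2\lambda}$ in $(2\lambda)_{\mathfrak{S}_{2k}}$ by checking that the coefficient of the identity survives (a sign analysis with $\mathrm{sgn}\,h=+1$). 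Your argument instead passes through the Frobenius characteristic map, the wreath-product plethysm formula $\mathrm{ch}\bigl(\mathrm{Ind}^{\mathfrak{S}_{2k}}_{\mathfrak{S}_2\wr\mathfrak{S}_k}1\bigr)=h_k[h_2]$, and Littlewood's identity $\prod_{i\leq j}(1-x_ix_j)^{-1}=\sum_{\mu\ \text{even}} s_\mu$; this is shorter and yields the full generating function in one stroke, at the cost of invoking two classical black boxes, whereas the paper's proof is self-contained and deliberately rehearses the chord-diagram and Young-symmetrizer machinery ($C_0$-relative types, the elements $c_{2\lambda}$, $H_k$-orbit counting) that is reused in the eigenvalue computations of \S4. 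Both proofs are complete; yours generalizes immediately to $\mathfrak{S}_n\wr\mathfrak{S}_k$, while the paper's produces explicit invariant vectors that the metric computations later exploit.

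One small slip in your cycle-index verification: the monodromy cases are interchanged. A cycle of $\sigma$ of length $\ell$ whose $\epsilon$-product is $+1$ splits the covering pairs into \emph{two} cycles of length $\ell$, while $\epsilon$-product $-1$ yields \emph{one} cycle of length $2\ell$ (check $k=\ell=1$: $\epsilon=-1$ is the transposition $(12)$, a single $2$-cycle). Fortunately this does not damage the computation: exactly $2^{\ell-1}$ sign patterns fall in each case, so the average over $H_k$ still produces $\tfrac{1}{2}\bigl(p_\ell^2+p_{2\ell}\bigr)=p_\ell[h_2]$ per cycle, and the power-sum expansion of $h_k[h_2]$ is recovered as you claim.
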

This is an elementary fact about representations of symmetric groups 
and can be proved by applying standard argument in the theory of representations of finite groups
(see e.g. \cite{iwahori}).
However, in order to motivate later arguments and also for completeness, 
we give a proof along the line of the ingredient of this paper.

We first recall a few terminologies about representations of the symmetric groups from \cite{fh}. 
We will use them also in section $\S5$.

For a Young diagram $\lambda$ with $|\lambda|=d$, choose a Young tableau $T$ (say a standard one) on $\lambda$.
Then we set
\begin{align*}
P_\lambda&=\{\gamma\in \mathfrak{S}_d; \text{$\gamma$ preserves each row}\}\\
Q_\lambda&=\{\gamma\in \mathfrak{S}_d; \text{$\gamma$ preserves each column}\}
\end{align*}
$$
a_\lambda=\sum_{\gamma\in P_\lambda} \gamma,\quad b_\lambda=\sum_{\gamma\in Q_\lambda} \mathrm{sgn}\,\gamma\,\gamma
$$
The element
$$
c_\lambda=a_\lambda b_\lambda\in \Q[\mathfrak{S}_d]
$$
is called a Young symmetrizer. We can also use the following element
$$
c'_\lambda=b_\lambda a_\lambda \in \Q[\mathfrak{S}_d]
$$
which is another form of Young symmetrizer and can play the same role as above.
In this paper, we use both ones.

Next we recall the following well known result.

\begin{thm}[Mackey's restriction formula, see e.g. \cite{serre}]
Let $G$ be a finite group and let $H, K$ be subgroups of $G$. Let
$\rho$ be a representation of $H$ and for each $(H,K)$-double coset 
representative $g$, let $\rho_g$ be the representation of $H_g=gHg^{-1}\cap K$
defined by $\rho_g(x)=\rho(g^{-1}x g)\ (x\in gHg^{-1})$.
Then we have the equality
$$
\mathrm{Res}^G_K\, \mathrm{Ind}^G_H\, \rho=\bigoplus_{g\in K\backslash G/H} \mathrm{Ind}^K_{H_g}\, \rho_g.
$$
\label{thm:mackey}
\end{thm}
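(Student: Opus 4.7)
The plan is to use the concrete model $\mathrm{Ind}^G_H \rho = \bigoplus_{gH \in G/H} g \otimes V_\rho$, with $G$ acting on the left and the relation $gh \otimes v = g \otimes \rho(h)v$ for $h \in H$. The strategy is to group the left cosets $gH$ according to which $(K,H)$-double coset $KgH$ they belong to, producing a $K$-stable direct sum decomposition of $\mathrm{Res}^G_K \mathrm{Ind}^G_H \rho$, and then to identify each summand explicitly with an induced representation from $H_g$ to $K$.

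First I would partition $G = \bigsqcup_{g \in K\backslash G/H} KgH$ and, for each double coset representative $g$, form the subspace
$$
W_g = \bigoplus_{g'H \subset KgH} g' \otimes V_\rho.
$$
Since $K \cdot KgH = KgH$, the subspace $W_g$ is $K$-invariant, and the subspaces $W_g$ for distinct double cosets are disjoint with total sum everything, giving the $K$-module decomposition $\mathrm{Res}^G_K \mathrm{Ind}^G_H \rho = \bigoplus_g W_g$.

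Next I would identify the index set of $W_g$ with $K/H_g$. The map $K \to \{g'H : g'H \subset KgH\}$, $k \mapsto kgH$, is surjective, and its fibers are left $H_g$-cosets: $k_1 gH = k_2 gH$ iff $g^{-1} k_2^{-1} k_1 g \in H$ iff $k_2^{-1} k_1 \in gHg^{-1} \cap K = H_g$. This yields a $K$-equivariant bijection $K/H_g \xrightarrow{\cong} \{g'H \subset KgH\}$, $kH_g \mapsto kgH$. Note in particular that $g^{-1} H_g g \subset H$, so $\rho_g$ is well-defined on $H_g$.

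Finally I would transport the $K$-action on $W_g$ through this bijection to $\bigoplus_{kH_g \in K/H_g} k \otimes V_\rho$ and check that the defining relations are exactly those of $\mathrm{Ind}^K_{H_g} \rho_g$. For $h_g \in H_g$, writing $h_g = g h g^{-1}$ with $h \in H$, we have inside $W_g$
$$
(k h_g) g \otimes v \;=\; k g \cdot (g^{-1} h_g g) \otimes v \;=\; kg \otimes \rho(g^{-1} h_g g)\, v \;=\; kg \otimes \rho_g(h_g)\, v,
$$
which is precisely the tensor-product relation defining $\mathrm{Ind}^K_{H_g} \rho_g$. Hence $W_g \cong \mathrm{Ind}^K_{H_g} \rho_g$ as $K$-modules, and summing over $g \in K\backslash G/H$ gives the formula. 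The main technical point is bookkeeping rather than conceptual: one must verify carefully that the fibers of $k \mapsto kgH$ are precisely the $H_g$-cosets, and keep track consistently of the left/right conventions for cosets and for the conjugation defining $\rho_g$. Once this is set up, the identification of each $W_g$ with the appropriate induced representation is a direct check from the definitions.
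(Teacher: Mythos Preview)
Your argument is correct and is the standard proof of Mackey's formula: decompose the induced module according to the partition of $G/H$ into $K$-orbits (equivalently, into $(K,H)$-double cosets), identify each orbit with $K/H_g$, and verify the tensor relations match those of $\mathrm{Ind}^K_{H_g}\rho_g$. The bookkeeping with cosets and the conjugation defining $\rho_g$ is handled correctly.

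Note, however, that the paper does not actually give a proof of this theorem: it is stated as a known result with a reference to Serre's textbook, and is then applied as a tool in the proof of Proposition~\ref{prop:hk}. So there is no ``paper's own proof'' to compare against; your write-up is essentially the argument one finds in \cite{serre}.
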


\begin{proof}[Proof of Proposition $\ref{prop:hk}$]
First, we show that 
\begin{equation}
\left|H_k\backslash \mathfrak{S}_{2k}/H_k\right|=p(k).
\label{eq:pk}
\end{equation}
Namely the number of elements of the double cosets 
$H_k\backslash \mathfrak{S}_{2k}/H_k$ is equal to $p(k)$ which is the
number of partitions of $k$. Recall that the set of left $H_k$-cosets of $\mathfrak{S}_{2k}$
can be canonically identified with $\mathcal{D}^\ell(2k)$. $H_k$ acts on this set
from the left and it is easy to see that the space of orbits of this action can be
identified with the set of $H_k$-double cosets of $\mathfrak{S}_{2k}$.
Now define the {\it $C_0$-relative type} of an element $C\in\mathcal{D}^\ell(2k)$,
denoted by $\tau_{C_0}(C)$,
as follows. Consider the graph $\Gamma_C=C_0\cup C$. Each connected component
of $\Gamma_C$ contains even number of vertices from the set of all vertices $\{1,2,\cdots,2k\}$.
If we enumerate all these even numbers in the decreasing order, then we obtain
an even type Young diagram $2\lambda$ where $\lambda$ is
a partition of $k$ (equivalently a Young diagram with $k$ boxes). 
We put $\tau_{C_0}(C)=\lambda$. For example $\tau_{C_0}(C_0)=[1^k]$.
Then we can show the following. For any two linear chord diagrams $C,D\in\mathcal{D}^\ell(2k)$,
the corresponding left $H_k$-cosets belong to the same right $H_k$-orbits if and only if
$\tau_{C_0}(C)=\tau_{C_0}(D)$. Also, for any Young diagram $\lambda$ with $|\lambda|=k$,
there exists $C\in \mathcal{D}^\ell(2k)$ such that $\tau_{C_0}(C)=\lambda$.
Since there are exactly $p(k)$ Young diagrams $\lambda$ with $|\lambda|=k$, the claim 
\eqref{eq:pk} is proved.

Next, by the Frobenius reciprocity theorem, for any Young diagram $\mu$ with $|\mu|=2k$ we have
\begin{equation}
\text{multiplicity of $\mu_{\mathfrak{S}_{2k}}$ in $\mathrm{Ind}^{\mathfrak{S}_{2k}}_{H_k} 1_{H_k}$}
=\mathrm{dim}\,(\mathrm{Res}^{\mathfrak{S}_{2k}}_{H_k}\, \mu_{\mathfrak{S}_{2k}})^{H_k}.
\label{eq:mult}
\end{equation}
We show that for any Young diagram $\lambda$ with $|\lambda|=k$
\begin{equation}
\mathrm{dim}\,(\mathrm{Res}^{\mathfrak{S}_{2k}}_{H_k}\, (2\lambda)_{\mathfrak{S}_{2k}})^{H_k}\geq 1.
\label{eq:2l}
\end{equation}
To prove this, it suffices to show that 
$$
g_\lambda=\left(\sum_{\gamma\in H_k} \gamma\right) c_{(2\lambda)}\not=0\ \in \Q[\mathfrak{S}_{2k}]
$$
because this is clearly an $H_k$-invariant element. 
Here we choose a standard tableau $T$ on $2\lambda$ to be the one defined as follows.
We give the numbers $1,2,\cdots,2\lambda_1$ to fill the first row from the left to the right
and then the numbers $2\lambda_1+1,\cdots,2\lambda_1+2\lambda_2$ to the second row
from the left to the right
and so on, where $\lambda=[\lambda_1\cdots\lambda_h]$. Consider
$k$ ordered pairs 
$$
\langle1,2\rangle, \langle2,3\rangle,\cdots, \langle2k-1, 2k\rangle
$$
of numbers each of which is placed on some row of $2\lambda$. Now the subgroup
$H_k$ is isomorphic to the semi-direct product $(\mathfrak{S}_2)^k\rtimes\mathfrak{S}_k$
and the $i$-th $\mathfrak{S}_2$ acts on $\langle 2i-1,2i\rangle$ naturally while $\mathfrak{S}_k$
acts on the above pairs by permutations (preserving each order). It follows that we can write
$$
\sum_{\gamma\in H_k}=\sum_{\gamma\in \mathfrak{S}_k} \gamma \sum_{\delta\in (\mathfrak{S}_2)^k} \delta
$$
and also that $(\mathfrak{S}_2)^k$ is contained in $P_{2\lambda}$ which is the subgroup of 
$\mathfrak{S}_{2k}$ consisting of elements which permute the numberings
but within each row of $2\lambda$.
Therefore
$$
g_\lambda=\left(\sum_{\gamma\in \mathfrak{S}_k} \gamma\right) \left(\sum_{\delta\in (\mathfrak{S}_2)^k} \delta\right)
a_{2\lambda}b_{2\lambda}
=2^k \left(\sum_{\gamma\in \mathfrak{S}_k} \gamma\right) c_{2\lambda}
$$
because $\delta\, a_{2\lambda}=a_{2\lambda}$ for any $\delta\in P_{2\lambda}$.
Recall that the coefficient of the unit element $1\in \mathfrak{S}_{2k}$ in the expression $c_{2\lambda}$ is $1$.
To prove the assertion $g_\lambda\not=0$, it suffices to show that the coefficient of $1$ is different from $0$.
Suppose for some elements $\gamma\in \mathfrak{S}_k, g\in P_{2\lambda}, h\in Q_{2\lambda}$, the
equality
\begin{equation}
\gamma g h=1
\label{eq:gh}
\end{equation}
holds. It implies that $gh\in \mathfrak{S}_k$. Let $\langle 2i-1,2i\rangle$ be any one of the above ordered
pair. Then we can write
$$
gh \langle 2i-1,2i\rangle=\langle gh(2i-1),gh(2i)\rangle=\langle 2j-1,2j\rangle
$$
for some $j$, because the action of $gh=\gamma^{-1}\in \mathfrak{S}_{2k}$ reduces to permutations of the ordered pairs preserving 
the order of each pair. Recall that $h(2i-1)$ (resp. $h(2i))$ lies in the same column as $2j-1$ (resp. $2j$).
Since $g$ preserves each row and still $gh(2i-1)=2j-1$ and $gh(2i)=2j$ belong to the same row, we can conclude that
$h(2j-1)$ and $h(2j)$ belong to the same row. It follows that the image of $\langle 2i-1,2i\rangle$ under
$h$ is again one of the ordered pairs. Since $\langle 2i-1,2i\rangle$ was an arbitrary ordered pair,
we can conclude that $h$ is contained in the subgroup $\mathfrak{S}_k\subset\mathfrak{S}_{2k}$.
Now observe that the $sgn$ of any element of this subgroup $\mathfrak{S}_k$, but considered as
an element of $\mathfrak{S}_{2k}$, is $+1$ because it acts on $\{1,2,\cdots,2k\}$
by an even permutation. Thus, whenever the equality \eqref{eq:gh} occurs, we have $\mathrm{sgn}\, h=1$.
We can now conclude that the coefficient of $1$ in $g_\lambda$ is different from $0$ as claimed.

Finally, we prove
\begin{equation}
\mathrm{dim}\, \left(\mathrm{Res}^{\mathfrak{S}_{2k}}_{H_k}\, \mathrm{Ind}^{\mathfrak{S}_{2k}}_{H_k} 1_{H_k}\right)^{H_k}=p(k).
\label{eq:gpk}
\end{equation}
It is easy to see that the representation 
$\mathrm{Res}^{\mathfrak{S}_{2k}}_{H_k}\, \mathrm{Ind}^{\mathfrak{S}_{2k}}_{H_k} 1_{H_k}$
is nothing other than the one associated with the left $H_k$-action on the
set $\mathcal{D}^\ell(2k)\cong \mathfrak{S}_{2k}/H_k$. Therefore, by a well-known
basic fact, the multiplicity of the trivial representation occurring in it
is the same as the number of orbits of this action which is $p(k)$ by
\eqref{eq:pk}.
More precisely, we can apply Theorem \ref{thm:mackey}
to the most simple case where $G=\mathfrak{S}_{2k}, H=K=H_k$ and $\rho$ is the
one dimensional trivial representation. Then we obtain
$$
\mathrm{Res}^{\mathfrak{S}_{2k}}_{H_k}\, \mathrm{Ind}^{\mathfrak{S}_{2k}}_{H_k} 1_{H_k}
=\bigoplus_{g\in H_k\backslash {\mathfrak{S}_{2k}}/H_k} \mathrm{Ind}^{H_k}_{(H_k)_g}\, 1_{(H_k)_g}.
$$
Taking the $H_k$-trivial part, this implies
$$
\left(\mathrm{Res}^{\mathfrak{S}_{2k}}_{H_k}\, \mathrm{Ind}^{\mathfrak{S}_{2k}}_{H_k} 1_{H_k}\right)^{H_k}
=\bigoplus_{g\in H_k\backslash {\mathfrak{S}_{2k}}/H_k} \left(\mathrm{Ind}^{H_k}_{(H_k)_g}\, 1_{(H_k)_g}\right)^{H_k}.
$$
Again by the Frobenius reciprocity theorem, we have
$$
\text{$\mathrm{dim}\, \left(\mathrm{Ind}^{H_k}_{(H_k)_g}\, 1_{(H_k)_g}\right)^{H_k}=1\ $ for any $g\in H_k\backslash {\mathfrak{S}_{2k}}/H_k$}
$$
and the claim \eqref{eq:gpk} follows.

Since there are exactly $p(k)$ Young diagrams $\lambda$ with $|\lambda|=k$, \eqref{eq:2l} implies 
\begin{equation*}
\mathrm{dim}\, (\mathrm{Res}^{\mathfrak{S}_{2k}}_{H_k}\, \mu_{\mathfrak{S}_{2k}})^{H_k}
=\begin{cases}
1\quad \text{if $\mu=2\lambda$ for some $\lambda$}\\
0\quad \text{otherwise}.
\end{cases}
\label{eq:n2l}
\end{equation*}
In view of \eqref{eq:mult}, we can now conclude
$$
\mathrm{Ind}^{\mathfrak{S}_{2k}}_{H_k} 1_{H_k}\cong\bigoplus_{|\lambda|=k} (2\lambda)_{\mathfrak{S}_{2k}}
$$
as required.

\end{proof}

\begin{prop}
A basis for the subspace $E_\lambda$ is given by
$$
\{c_\tau (C_0); \tau\in ST(\lambda)\}
$$
where $ST(\lambda)$ denotes the set of all the standard tableaux on the Young
diagram $2\lambda$ and $c_\tau$ denotes the Young symmetrizer corresponding to
$\tau\in ST(\lambda)$.
\end{prop}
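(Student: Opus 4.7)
The plan has three parts: a dimension count, a containment verification, and a linear independence argument; the last is the main obstacle.

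\emph{Dimension count.} By Proposition \ref{prop:E}, $E_\lambda \cong (2\lambda)_{\mathfrak{S}_{2k}}$, so $\dim E_\lambda = |ST(\lambda)|$ by the classical formula identifying the dimension of a Young irreducible of the symmetric group with the number of standard tableaux on its shape. Hence it suffices to exhibit $|ST(\lambda)|$ linearly independent elements of $E_\lambda$.

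\emph{Containment.} For $\tau \in ST(\lambda)$, the Young symmetrizer $c_\tau$ for the shape $2\lambda$ is a quasi-idempotent whose image on any $\mathfrak{S}_{2k}$-module lies in the $(2\lambda)$-isotypic component; by the multiplicity-free decomposition of Proposition \ref{prop:E}, this isotypic component of $\Q\mathcal{D}^\ell(2k)$ is precisely $E_\lambda$, so $c_\tau(C_0) \in E_\lambda$.

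\emph{Linear independence.} Let $v_\lambda := \pi_\lambda(C_0) \in E_\lambda$ denote the projection onto the $E_\lambda$-summand. Since $\mathfrak{S}_{2k}$ acts transitively on $\mathcal{D}^\ell(2k)$, the orbit of $C_0$ spans $\Q\mathcal{D}^\ell(2k)$, forcing $v_\lambda \neq 0$; being $H_k$-invariant, $v_\lambda$ spans the unique $H_k$-fixed line in $E_\lambda$ by the multiplicity-one statement of Proposition \ref{prop:hk}. Since $c_\tau$ annihilates the other isotypic components, $c_\tau(C_0) = c_\tau(v_\lambda)$, and the problem reduces to showing that $\{c_\tau(v_\lambda)\}_{\tau \in ST(\lambda)}$ is a basis of $(2\lambda)_{\mathfrak{S}_{2k}}$. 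I would prove this by a triangularity argument extending the nonvanishing analysis for $g_\lambda = (\sum_{\gamma \in H_k}\gamma)\, c_{2\lambda}$ from Proposition \ref{prop:hk}: for each standard $\tau$ one associates a distinguished chord diagram $C_\tau$ constructed from its column data, and shows that the coefficient of $C_\tau$ in $c_\tau(C_0)$ is nonzero while vanishing in $c_{\tau'}(C_0)$ for all $\tau' \prec \tau$ in an appropriate partial order on standard tableaux.

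A technical subtlety, accommodated by the paper's convention of permitting both $c_\tau = a_\tau b_\tau$ and $c'_\tau = b_\tau a_\tau$, is that one may need to choose the appropriate form of Young symmetrizer for each $\tau$: for instance, when the columns of $\tau$ coincide with the chords of $C_0$, the form $a_\tau b_\tau$ annihilates $C_0$ while $b_\tau a_\tau$ does not. Verifying that the mixed system of symmetrizers one ends up with still produces $|ST(\lambda)|$ linearly independent vectors is, I expect, the hardest combinatorial step.
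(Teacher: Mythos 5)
Your scaffolding is correct and well organized: the dimension count via Proposition \ref{prop:E}, the containment $c_\tau(C_0)\in E_\lambda$ (a quasi-idempotent for the shape $2\lambda$ kills every other isotypic component, and by multiplicity-freeness the $(2\lambda)$-isotypic component is exactly $E_\lambda$), and the reduction to the $H_k$-invariant vector $v_\lambda$ spanning the unique fixed line are all sound. Your ``technical subtlety'' is in fact sharper than you may realize: with the paper's convention $c_\tau=a_\tau b_\tau$ the statement is literally \emph{false}. For $2\lambda=[2,2]$ take the standard tableau $\tau$ whose columns are $\{1,2\}$ and $\{3,4\}$; every element of the column group fixes $C_0$ as a chord diagram, so $b_\tau C_0=\bigl(1-1-1+1\bigr)C_0=0$ and hence $c_\tau(C_0)=0$, while $\dim E_{[1,1]}=2$. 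So the proposition must be read with $c'_\tau=b_\tau a_\tau$ uniformly (one checks in this example that $c'_{\tau_1}(C_0)$ and $c'_{\tau_2}(C_0)$ are indeed independent), which is consistent with \S4 of the paper, where the vectors used are always $c'_{2\lambda}C_0$ and the computation $\langle c'_{2\lambda}C_0,C_0\rangle=m_{2\lambda}\mu_\lambda$ with $m_{2\lambda}=2^k\prod_i\lambda_i!\prod_j\lambda'_j!\neq 0$ certifies nonvanishing for the row-reading tableau at generic $g$. For what it is worth, the paper states this proposition with no proof at all, so there is nothing to match your argument against; it must stand on its own.

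It does not yet stand: the decisive step, linear independence of the $|ST(\lambda)|$ vectors, is only announced. The distinguished diagram $C_\tau$, the partial order $\prec$ on standard tableaux, and the claimed vanishing/nonvanishing of coefficients are never defined or verified, and your closing paragraph concedes that the verification for the ``mixed system'' of symmetrizers is open. That last proposal is also the wrong fix: mixing $a_\tau b_\tau$ and $b_\tau a_\tau$ across different tableaux breaks the triangularity lemmas for products of symmetrizers of standard tableaux of a common shape (of the type $c_{\tau'}c_\tau=0$ for $\tau'$ exceeding $\tau$) on which any such independence argument would rest, so the patching is not routine. A completed proof would instead work uniformly with $c'_\tau$ and then either (a) carry out your triangularity scheme honestly, exhibiting for each $\tau$ a diagram whose coefficient in $c'_\tau(C_0)$ is nonzero but vanishes for all smaller tableaux, generalizing the coefficient-of-the-identity argument in the proof of Proposition \ref{prop:hk} and the $m_{2\lambda}$ computation of \S4, or (b) show the Gram matrix of $\{c'_\tau(C_0)\}$ under the pairing $\langle\ ,\ \rangle$ is nonsingular for generic $g$, using $\langle\xi,\eta\rangle=\mu_\lambda(\xi,\eta)_{\mathrm{Euclid}}$ on $E_\lambda$. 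As submitted, the heart of the proof is missing.
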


\begin{prop}
The correspondence
$$
\lambda \mapsto \mu_\lambda
$$
defines an injective mapping from the set of all the Young diagrams
$\lambda$ with $k$ boxes into the space of polynomials in $g$ of degree $k$.
\label{prop:lp}
\end{prop}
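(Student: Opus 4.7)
The plan has two parts: first verify that $\mu_\lambda$ really is a polynomial of degree $k$ in $g$, then prove injectivity by reconstructing $\lambda$ from $\mu_\lambda$ inductively on $k$. Each factor $2g - 2s_b + t_b$ is linear in $g$ with leading coefficient $2$, so $\mu_\lambda = 2^k \prod_{b \in \lambda} \bigl(g - (s_b - t_b/2)\bigr)$ is a polynomial of degree exactly $k$ with leading coefficient $2^k$. Since the leading coefficient is fixed, $\mu_\lambda$ is determined by, and uniquely determines, its multiset of roots
$$R(\lambda) = \bigl\{(j-1) - (i-1)/2 : (i,j)\in \lambda\bigr\} \subset \tfrac{1}{2}\Z,$$
so it suffices to recover $\lambda$ from $R(\lambda)$.

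The key observation is that the maximum element of $R(\lambda)$ equals $\lambda_1 - 1$, and is attained uniquely at the top-right corner box $(1,\lambda_1)$. Indeed, for any box in row $i \geq 2$ one has $(j-1) - (i-1)/2 \leq (\lambda_1 - 1) - 1/2 < \lambda_1 - 1$, while among boxes in the first row the expression $(j-1)$ is maximized (uniquely) at $j = \lambda_1$. Thus $\lambda_1$ is recovered from $\mu_\lambda$ as one plus the largest real root.

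Next, the first row contributes to $R(\lambda)$ exactly the sub-multiset $\{0, 1, 2, \ldots, \lambda_1 - 1\}$ (with multiplicity one each from the first row, though other rows may contribute extra copies). Performing the multiset subtraction $R(\lambda) \setminus \{0, 1, \ldots, \lambda_1 - 1\}$ and translating each remaining element by $+1/2$ gives, via the substitution $i = i' + 1$, precisely the multiset $R(\lambda')$ of the Young diagram $\lambda' = (\lambda_2, \lambda_3, \ldots)$ obtained by deleting the first row. Since $|\lambda'| = k - \lambda_1 < k$, induction on $k$ (with trivial base case $k = 0$, $\mu_\emptyset = 1$) recovers $\lambda'$ from its polynomial $\mu_{\lambda'}$, and consequently $\lambda = (\lambda_1, \lambda')$ is recovered from $\mu_\lambda$. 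The only slightly subtle point is verifying the uniqueness of the maximum root (that it is unaffected by deeper rows), which follows immediately from the inequality above; once this is in place the inductive reconstruction is mechanical.
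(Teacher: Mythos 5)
Your proof is correct, but it takes a genuinely different route from the paper's. Both arguments reconstruct the shape of $\lambda$ from the factorization of $\mu_\lambda$, but the paper works with both extremes of the linear factors at once: the largest factor $2g+h-1$ recovers the number of rows $h$, the smallest factor $2g-2\lambda_1+2$ recovers $\lambda_1$, and one then divides $\mu_\lambda$ by the first-row and first-column factors and recurses on the diagram $\lambda^{(1)}$ obtained by deleting the first row \emph{and} the first column --- a hook-like peeling in which the factors of the smaller diagram must be tracked as shifted factors inside the original $\lambda$ (e.g.\ the extreme factors at the next stage become $2g+h_2-3$ and $2g-2\lambda_2+3$). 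You instead pass to the root multiset $R(\lambda)\subset\tfrac12\Z$, use only one extreme value (the maximal root $\lambda_1-1$), and peel off a single row, exploiting the exact self-similarity
$$
\Bigl(R(\lambda)\setminus\{0,1,\ldots,\lambda_1-1\}\Bigr)+\tfrac12=R(\lambda'),\qquad \lambda'=(\lambda_2,\lambda_3,\ldots).
$$
The half-integer shift makes your induction exactly self-similar with no re-indexing, and the multiset subtraction cleanly handles coincidences among factors: since $R(\lambda)$ is the disjoint multiset union of the first row's contribution $\{0,\ldots,\lambda_1-1\}$ and the roots of the deeper rows, the subtraction removes exactly the first row even when rows $i\geq 3$ contribute repeated integer roots --- the one subtle point, which you correctly flag and justify via the bound $(j-1)-(i-1)/2\leq\lambda_1-\tfrac32$ for $i\geq 2$. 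What the two approaches buy: yours is more mechanical and fully rigorous where the paper's recursion is left somewhat informal (``continuing this kind of argument''), and it needs only the top root at each step; the paper's hook-peeling extracts the column lengths $h,h_2,\ldots$ alongside the row lengths, i.e.\ the conjugate data as well, which connects directly with the extreme-eigenvalue computations ($\mu_{[1^k]}$ and $\mu_{[k]}$) appearing elsewhere in the paper.
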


\begin{proof}
It is enough to recover the shape of any Young diagram $\lambda=[\lambda_1\cdots \lambda_h]$ from the 
corresponding polynomial $\mu_\lambda$.
Recall that $\mu_\lambda$ is the product of $\mu_b=2g-2s_b+t_b$ where $b$ runs through all the boxes of
$\lambda$. It is clear that the largest (reap. the smallest) one among $\mu_b$ is $(2g+h-1)$ (resp. $(2g-2\lambda_1+2)$).
It follows that $\mu_\lambda$ determines $h$, which is the number of rows of $\lambda$, and $\lambda_1$.
Now consider the quotient
$$
\mu_\lambda^{(1)}=\frac{\mu_\lambda}{2g(2g+1)\cdots (2g+h-1)(2g-2)\cdots (2g-2\lambda_1+2)}
$$
which is again a polynomial in $g$ of degree $(k-h-\lambda_1+1)$. Let $\lambda^{(1)}$ be the
Young diagram obtained from $\lambda$ by deleting the first row and the first column. It has
$(k-h-\lambda_1+1)$ boxes. For example, if $\lambda=[432]$, then $\lambda^{(1)}=[21]$.
Now $\mu_\lambda^{(1)}$ is expressed as the product of $\mu_b$ where $b$ runs through
all the boxes of $\lambda^{(1)}$ which we consider as a subset of the original $\lambda$.
Now it is easy to see that the largest (reap. the smallest) one among 
$\mu_b\ (b\in \lambda^{(1)}$ is $(2g+h_2-3)$ (resp. $(2g-2\lambda_2+3)$)
where $h_2$ denotes the number of boxes of the second column of $\lambda$.
It follows that we can recover $h_2$ and $\lambda_2$. Continuing this kind of argument,
we see that we can eventually recover the shape of $\lambda$. This completes the proof.
\end{proof}

\section{Orthogonal decomposition of $(H_\Q^{\otimes 2k})^{\mathrm{Sp}}$}\label{sec:orth}

In this section, we deduce Theorem \ref{thm:ortho} from Theorem \ref{thm:lcd} whose proof
will be given in the next section. See Table \ref{tab:8} for an example of the orthogonal direct
sum decomposition.
\begin{table}[h]
\caption{$\text{Orthogonal decomposition of $(H_\Q^{\otimes 8})^{\mathrm{Sp}}$\  }$}
\begin{center}
\begin{tabular}{|c|c|c|l|}
\noalign{\hrule height0.8pt}
\hfil $\lambda$ & $\mu_{\lambda'}\ \text{(eigen value of}\ E_{\lambda'})$ 
& \text{$\dim\, U_{\lambda}$ in case $U_{\lambda}\not=\{0\}$} & $\text{genera for}\ U_{\lambda}\not=\{0\}$   \\
\hline
$[1^4]$ & $(2g-6)(2g-4)(2g-2)2g$ & $1$ & $g=4,5,\cdots$ \\
\hline
$[21^2]$ & $(2g-4)(2g-2)2g(2g+1)$ & $20$ & $g=3,4,5,\cdots$  \\
\hline
$[2^2]$ & $(2g-2)(2g-1)2g(2g+1)$ & $14$ & $g=2,3,4,5,\cdots$  \\
\hline
$[31]$ & $(2g-2)2g(2g+1)(2g+2)$ & $56$ & $g=2,3,4,5,\cdots$  \\
\hline
$[4]$ & $2g(2g+1)(2g+2)(2g+3)$ & $14$ & $g=1,2,3,4,5,\cdots$  \\                                                               
\hline
{} & {}  & $105$ & {}   \\
\noalign{\hrule height0.8pt}
\end{tabular}
\end{center}
\label{tab:8}
\end{table}

\begin{proof}[Proof of Theorem $\ref{thm:ortho}$] 
In view of Proposition \ref{prop:anti} and Proposition \ref{prop:lcdmu}, we have only to determine
the kernel of the {\it surjective} linear mapping
$$
\Phi: \Q\mathcal{D}^\ell (2k) \rightarrow (H_\Q^{\otimes 2k})^{\mathrm{Sp}}
$$
when we fix a specific genus $g$.
Let us denote the eigenvalue of $E_\lambda$ by $\mu_\lambda(g)$ to indicate its dependence
on the genus. The smallest factor in the expression of $\mu_\lambda(g)$,
considered as a polynomial in $g$ of degree $k$, is
$
(2g-2\lambda_1+2).
$
Therefore we have
$$
\mu_\lambda(g)\not=0\ \Leftrightarrow\ \lambda_1\leq g
$$
It follows that
$$
\mathrm{Ker}\,\Phi=\bigoplus_{|\lambda|=k, \lambda_1> g} E_\lambda
$$
and $\Phi$ induces an {\it isometric} linear isomorphism
$$
\bar{\Phi}:\bigoplus_{|\lambda|=k, \lambda_1\leq g} E_\lambda\cong (H_\Q^{\otimes 2k})^{\mathrm{Sp}}
$$
which is also an ``anti" -isomorphism as $\mathfrak{S}_{2k}$-modules.
Now for each Young diagram $\lambda$ with $|\lambda|=k$ and 
$h(\lambda)\leq g$, we set
$$
U_\lambda=\bar{\Phi}(E_{\lambda'})
$$
where $\lambda'$ denotes the conjugate Young diagram of $\lambda$ so that
$\lambda'_1=h(\lambda)\leq g$. Also as an $\mathfrak{S}_{2k}$-module,
we have isomorphisms
$$
U_\lambda\cong E_{\lambda'}\otimes [1^{2k}]_{\mathfrak{S}_{2k}}\cong 
(2\lambda')_{\mathfrak{S}_{2k}}\otimes [1^{2k}]_{\mathfrak{S}_{2k}}\cong 
\lambda^\delta_{\mathfrak{S}_{2k}}
$$
where the last isomorphism follows because it is easy to see that the identity
$$
(2\lambda')'=\lambda^\delta
$$
holds for any Young diagram $\lambda$. Thus we obtain an orthogonal direct sum decomposition
$$
(H_\Q^{\otimes 2k})^{\mathrm{Sp}}=\bigoplus_{|\lambda|=k, h(\lambda)\leq g} U_\lambda
$$
completing the proof.
\end{proof}

If we are concerned only with the {\it easier} part of Theorem \ref{thm:ortho}, namely
the irreducible decomposition of $(H_\Q^{\otimes 2k})^{\mathrm{Sp}}$ as an $\mathfrak{S}_{2k}$-module,
then there exists an alternative proof based on a classical result together with an argument
in the symplectic representation theory. We briefly sketch it here. First recall the following classical result.

\begin{thm}[\cite{fh},Theorem 6.3. (2)]
The $\mathrm{GL}(2g,\Q)$-irreducible decomposition of $H_\Q^{\otimes 2k}$ is give by
$$
H_\Q^{\otimes 2k}\cong \bigoplus_{|\lambda|=2k, h(\lambda)\leq 2g} 
\, \lambda_{\mathrm{GL}}^{\oplus \dim\, \lambda_{\mathfrak{S}_{2k}}}
$$
where $\lambda_{\mathrm{GL}}$ denotes the irreducible representation of $\mathrm{GL}(2g,\Q)$
corresponding to $\lambda$.
\label{thm:fh}
\end{thm}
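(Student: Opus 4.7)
The plan is to derive this as the classical Schur--Weyl duality for the commuting pair $(\mathrm{GL}(2g,\Q), \mathfrak{S}_{2k})$. The diagonal action of $\mathrm{GL}(2g,\Q)$ on $H_\Q^{\otimes 2k}$ commutes with the permutation action of $\mathfrak{S}_{2k}$, so the tensor power carries the structure of a $(\mathrm{GL}(2g,\Q)\times\mathfrak{S}_{2k})$-module, and both actions are completely reducible in characteristic zero (by reductivity of $\mathrm{GL}$ and by Maschke for $\mathfrak{S}_{2k}$).

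The crux is the double centralizer assertion: the subalgebra $A \subset \mathrm{End}(H_\Q^{\otimes 2k})$ generated by the $\mathrm{GL}$-action and the subalgebra $B$ generated by the $\mathfrak{S}_{2k}$-action are mutual centralizers. Once this is established, semisimplicity and Artin--Wedderburn yield a canonical bimodule decomposition
$$
H_\Q^{\otimes 2k} \cong \bigoplus_{\lambda} W_\lambda \boxtimes V_\lambda,
$$
where the $W_\lambda$ (resp.\ $V_\lambda$) are pairwise non-isomorphic simple $A$-modules (resp.\ $B$-modules), summed over those $\lambda$ for which both factors are nonzero. To identify the summands explicitly, I would apply a Young symmetrizer $c_\lambda \in \Q[\mathfrak{S}_{2k}]$ attached to a standard tableau on a Young diagram $\lambda$ with $|\lambda|=2k$ and identify $c_\lambda(H_\Q^{\otimes 2k})$ with the Schur module $S^\lambda(H_\Q)$; by the standard construction of polynomial $\mathrm{GL}$-irreducibles, this is precisely $\lambda_{\mathrm{GL}}$. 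Comparing with the bimodule decomposition forces $V_\lambda \cong \lambda_{\mathfrak{S}_{2k}}$ and multiplicity $\dim\,\lambda_{\mathfrak{S}_{2k}}$. The vanishing $S^\lambda(V) = 0$ whenever $h(\lambda) > \dim V$ (since antisymmetrization of more than $\dim V$ vectors is zero) cuts the range of summation down to $h(\lambda) \leq 2g$, matching the indexing set in the theorem.

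The hard part will be the double centralizer theorem itself, namely showing that every $\mathfrak{S}_{2k}$-equivariant endomorphism of $H_\Q^{\otimes 2k}$ lies in the linear span of diagonal operators $g^{\otimes 2k}$ for $g \in \mathrm{GL}(2g,\Q)$. The standard route is to identify $\mathrm{End}(H_\Q^{\otimes 2k})^{\mathfrak{S}_{2k}}$ with the symmetric power $\mathrm{Sym}^{2k}\,\mathrm{End}(H_\Q)$ and to exploit a polarization argument: the pure tensors $f^{\otimes 2k}$ span $\mathrm{Sym}^{2k}\,\mathrm{End}(H_\Q)$ as $f$ ranges over $\mathrm{End}(H_\Q)$, and since $\mathrm{GL}(2g,\Q)$ is Zariski dense in $\mathrm{End}(H_\Q)$, we may further restrict to $f \in \mathrm{GL}(2g,\Q)$, completing the reduction.
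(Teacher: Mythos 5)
Your proposal is correct, and it is the classical Schur--Weyl duality argument: double centralizer via the identification $\mathrm{End}(H_\Q^{\otimes 2k})^{\mathfrak{S}_{2k}}\cong\mathrm{Sym}^{2k}\,\mathrm{End}(H_\Q)$ with polarization and Zariski density, Young symmetrizers identifying the $\mathrm{GL}$-isotypic pieces with Schur modules, and the vanishing of the Schur module when $h(\lambda)>2g$ cutting down the index set. The paper offers no proof of its own here---the statement is quoted verbatim from Fulton--Harris \cite{fh}, Theorem 6.3(2), whose proof is essentially the one you outline---so your approach coincides with that of the cited source.
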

If we combine this with the following fact
$$
\mathrm{dim}\, (\la_{\mathrm{GL}})^{\mathrm{Sp}}=
\begin{cases}
&1\quad (\text{$\la$: multiple double floors})\\
&0\quad (\text{otherwise})
\end{cases}
$$
which follows from the restriction law corresponding to
the pair $\mathrm{Sp}(2g,\Q)\subset \mathrm{GL}(2g,\Q)$
(see Proposition 4.1 of \cite{mss2}), then we obtain a proof of 
the easier part of Theorem \ref{thm:ortho}. We mention here that,
in a recent paper \cite{es},
Enomoto and Satoh describes the $\mathrm{Sp}$-irreducible
decomposition of $H_\Q^{\otimes 2k}$.


In a dual setting to the homomorphism
$\Phi: \Q\mathcal{D}^\ell (2k)\rightarrow (H_\Q^{\otimes 2k})^{\mathrm{Sp}}$
given in $\S$2, we define a linear mapping
$$
\mathcal{K}: H_\Q^{\otimes 2k}\rightarrow \Q\mathcal{D}^\ell (2k)
$$
by
$$
\mathcal{K}(\xi)=\sum_{C\in \mathcal{D}^\ell (2k)} \alpha_C(\xi)\, C\quad (\xi\in H_\Q^{\otimes 2k})
$$
where $\alpha_C\in \mathrm{Hom}(H_\Q^{\otimes 2k},\Q)^{\mathrm{Sp}}$ is defined as
$$
\alpha_C(u_1\otimes\cdots\otimes u_{2k})=\mathrm{sgn}\, C
\prod_{s=1}^k \mu(u_{i_s},u_{j_s})\ (u_i\in H_\Q).
$$
We use the same symbol $\mathcal{K}$ to denote its
restriction to the subspace $(H_\Q^{\otimes 2k})^{\mathrm{Sp}}$.

\begin{prop}
$\mathrm{(i)}\ \text{
The following diagram is commutative}$
$$
\begin{CD}
\Q\mathcal{D}^\ell (2k)   @>{\mathcal{M}}>> \Q\mathcal{D}^\ell (2k) \\
@V{\Phi}VV @|\\
(H_\Q^{\otimes 2k})^{\mathrm{Sp}} @>{\mathcal{K}}>> \Q\mathcal{D}^\ell (2k)
\end{CD}
$$
and the lower homomorphism $\mathcal{K}$ is 
injective for any $g$ and bijective for any $g\geq k$.


$\mathrm{(ii)\ }$
For any element $\xi\in E_\lambda\subset \Q\mathcal{D}^\ell(2k)$, we have the following equality 
$$
\mathcal{K}(\Phi(\xi))=\mu_{\lambda\,} \xi.
$$

\label{prop:dual}
\end{prop}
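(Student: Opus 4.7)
The plan is to prove the commutativity of the diagram first; part (ii) will then follow immediately by applying the eigenvalue property of $\mathcal{M}$, and the injectivity/bijectivity statements in (i) will follow by combining commutativity with Theorem \ref{thm:ortho} and the non-vanishing of the eigenvalues $\mu_\lambda$.

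First I would establish the identity
\[
\alpha_D(\xi) = \mu^{\otimes 2k}(\xi \otimes a_D)
\]
for any $\xi \in H_\Q^{\otimes 2k}$ and $D = \{\{i_1,j_1\},\ldots,\{i_k,j_k\}\} \in \mathcal{D}^\ell(2k)$. By construction, $a_D = \mathrm{sgn}\, D \cdot \sigma_D(\omega_0^{\otimes k})$, where $\sigma_D$ is the permutation placing the $s$-th copy of $\omega_0$ at positions $(i_s,j_s)$. Applied slot-by-slot, the pairing $\mu^{\otimes 2k}(\xi \otimes a_D)$ then decomposes as a product over $s$ of factors $\mu^{\otimes 2}((u_{i_s} \otimes u_{j_s}) \otimes \omega_0)$, and each such factor equals $\mu(u_{i_s},u_{j_s})$ by Lemma \ref{lem:muom}(ii). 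This reproduces exactly $\alpha_D(\xi) = \mathrm{sgn}\, D \cdot \prod_s \mu(u_{i_s}, u_{j_s})$.

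Commutativity of the square then follows at once: for $C \in \mathcal{D}^\ell(2k)$,
\[
\mathcal{K}(\Phi(C)) = \sum_{D} \alpha_D(a_C)\, D = \sum_D \mu^{\otimes 2k}(a_C \otimes a_D)\, D = \sum_D \langle C, D \rangle\, D = \mathcal{M}(C),
\]
where the middle equality uses Proposition \ref{prop:lcdmu}. Part (ii) is then immediate: for $\xi \in E_\lambda$, the eigenvalue property from Theorem \ref{thm:lcd} yields $\mathcal{K}(\Phi(\xi)) = \mathcal{M}(\xi) = \mu_\lambda\, \xi$. For the bijectivity statements in (i), I would combine this with Theorem \ref{thm:ortho}: $\Phi$ restricts to an isomorphism from $\bigoplus_{(\lambda')_1 \leq g} E_{\lambda'}$ onto $(H_\Q^{\otimes 2k})^{\mathrm{Sp}}$, sending $E_{\lambda'}$ isomorphically onto $U_\lambda$. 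For any nonzero $\xi \in U_\lambda$ (with $h(\lambda) \leq g$), writing $\xi = \Phi(\eta)$ with $0 \neq \eta \in E_{\lambda'}$, part (ii) gives $\mathcal{K}(\xi) = \mu_{\lambda'}\, \eta$; the smallest factor of $\mu_{\lambda'}$ is $2g - 2(\lambda')_1 + 2 = 2g - 2h(\lambda) + 2 > 0$, so $\mu_{\lambda'} \neq 0$ and hence $\mathcal{K}(\xi) \neq 0$. This proves injectivity of $\mathcal{K}$ on $(H_\Q^{\otimes 2k})^{\mathrm{Sp}}$ for any $g$. When $g \geq k$, every $\lambda$ with $|\lambda|=k$ satisfies $\lambda_1 \leq k \leq g$ and $h(\lambda) \leq k \leq g$, so $\Phi$ is a bijection by Proposition \ref{prop:anti} and $\mathcal{M}$ has no vanishing eigenvalues; commutativity then forces $\mathcal{K}$ to be bijective as well.

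The main technical obstacle lies in the opening identity $\alpha_D(\xi) = \mu^{\otimes 2k}(\xi \otimes a_D)$, which requires careful bookkeeping of the sign $\mathrm{sgn}\, D$ and of how the placing permutation $\sigma_D$ interacts with $\mu^{\otimes 2k}$ — in particular, verifying that the antisymmetry of $\mu$ does not introduce an unwanted sign when $\sigma_D$ is absorbed into the evaluation. Once this identity is in place, the remainder of the argument cascades directly from the structural results already established.
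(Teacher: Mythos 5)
Your proof is correct, and for the commutativity and part (ii) it follows essentially the paper's own (very terse) argument: the paper disposes of commutativity with ``compare the definitions of $\mathcal{M}$, $\Phi$ and $\mathcal{K}$'', and your identity $\alpha_D(\xi)=\mu^{\otimes 2k}(\xi\otimes a_D)$ combined with Proposition \ref{prop:lcdmu} is exactly the verification being alluded to; the sign bookkeeping you worried about does work out, since $\mathrm{sgn}\,D$ enters once on each side and the chords occupy disjoint slot pairs, so the pairing factorizes into terms $\mu^{\otimes 2}\bigl((u_{i_s}\otimes u_{j_s})\otimes\omega_0\bigr)=\mu(u_{i_s},u_{j_s})$ by Lemma \ref{lem:muom}(ii), with no residual permutation sign. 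Where you genuinely diverge is the injectivity of $\mathcal{K}$ on $(H_\Q^{\otimes 2k})^{\mathrm{Sp}}$: the paper gets it in one line from Weyl's theorem that the contractions $\alpha_C$ span all $\mathrm{Sp}$-invariant functionals, hence separate points of the invariant subspace, whereas you deduce it from the spectral data --- $\mathcal{K}$ carries $U_\lambda$ into $E_{\lambda'}$ as $\mu_{\lambda'}$ times an isomorphism, and $\mu_{\lambda'}\neq 0$ precisely when $h(\lambda)\leq g$. Your route costs more machinery (Theorems \ref{thm:ortho} and \ref{thm:lcd}), but it is not circular, since both are established independently of Proposition \ref{prop:dual}, and it buys more than the paper's argument: it describes $\mathcal{K}$ on each summand explicitly instead of merely showing the kernel vanishes, which is in effect the content of Theorem \ref{thm:aalpha}. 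One half-line should be added: having shown $\mathcal{K}$ is injective on each single $U_\lambda$, injectivity on the whole space still needs the remark that $\mathcal{K}(U_\lambda)\subset E_{\lambda'}$ and the $E_{\lambda'}$ form a direct sum, so for $\xi=\sum_\lambda\xi_\lambda$ the components $\mu_{\lambda'}\,\eta_\lambda$ of $\mathcal{K}(\xi)$ lie in independent subspaces and must vanish separately; this is immediate, but as written your argument only treats $\xi$ lying in one $U_\lambda$.
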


\begin{proof}
First we prove $\mathrm{(i)}$. The commutativity of the diagram follows by comparing the definitions
of the homomorphisms $\mathcal{M}, \Phi$ and $\mathcal{K}$. The latter claim follows from the basic 
result of Weyl that any $\mathrm{Sp}$-invariant can be described in terms of various contractions.

Next we prove $\mathrm{(ii)}$. Since $\xi$ is an eigenvector of $\mathcal{M}$
corresponding to the eigenvalue $\mu_\lambda$, we have $\mathcal{M}(\xi)=\mu_\lambda\, \xi$.
If we combine this with the commutativity just proved above, then the claim follows.
\end{proof}

\begin{proof}[Proof of Theorem $\ref{thm:aalpha}$]
The fact that $U_{\lambda}=\Phi(E_{\lambda'})$ follows from Proposition \ref{prop:anti}.
The last claim follows from $\mathrm{(ii)}$ of the above Proposition \ref{prop:dual}.
\end{proof}

\section{Proof of Theorem \ref{thm:lcd}}\label{sec:proof}

In this  section, we prove Theorem $\ref{thm:lcd}$. 
For that, we use a few terminologies about representations of the symmetric groups from \cite{fh}
which we recalled in $\S2$. 

Let 
$$
\hat{\ \ }: \Q\mathfrak{S}_{2k} \rightarrow \Q\mathfrak{S}_{2k} 
$$
denote the anti-involution induced by the anti-automorphism 
$\mathfrak{S}_{2k}\ni\gamma\mapsto\gamma^{-1}\in \mathfrak{S}_{2k}$.
\begin{lem}
$\mathrm{(i)\ }$ For any Young diagram $\lambda$, the following equalities hold
$$
\hat{a}_\lambda=a_\lambda,\quad \hat{b}_\lambda=b_\lambda,\quad \hat{c}_\lambda=c'_\lambda,\quad \hat{c'}_\lambda=c_\lambda.
$$

$\mathrm{(ii)\ }$ For any element $x\in \Q\mathfrak{S}_{2k}$ and $\xi, \eta\in\Q\mathcal{D}^\ell(2k)$, the following equality holds
$$
\langle \xi, x \eta\rangle=\langle \hat{x}\, \xi, \eta\rangle.
$$
\label{lem:hat}
\end{lem}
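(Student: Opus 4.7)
The plan is that both parts of this lemma are essentially formal, flowing from the definitions and the $\mathfrak{S}_{2k}$-invariance of the pairing that has already been recorded.

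For part $\mathrm{(i)}$, I would first note that $\hat{\cdot}$ is an algebra anti-homomorphism, i.e.\ $\widehat{xy}=\hat{y}\hat{x}$, because it is induced by the map $\gamma\mapsto\gamma^{-1}$ on the basis $\mathfrak{S}_{2k}$. Then:
\begin{itemize}
\item Since $P_\lambda$ is a subgroup, $\gamma\mapsto\gamma^{-1}$ is a bijection on $P_\lambda$, so
$\hat{a}_\lambda=\sum_{\gamma\in P_\lambda}\gamma^{-1}=\sum_{\gamma\in P_\lambda}\gamma=a_\lambda$.
\item Since $Q_\lambda$ is a subgroup and $\mathrm{sgn}(\gamma^{-1})=\mathrm{sgn}(\gamma)$, the same argument gives $\hat{b}_\lambda=b_\lambda$.
\item Combining, $\hat{c}_\lambda=\widehat{a_\lambda b_\lambda}=\hat{b}_\lambda\hat{a}_\lambda=b_\lambda a_\lambda=c'_\lambda$, and analogously $\hat{c'}_\lambda=c_\lambda$.
\end{itemize}

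For part $\mathrm{(ii)}$, by bilinearity in $\xi$, $\eta$, and $x$ it suffices to verify the identity when $\xi=C$, $\eta=C'$ are chord diagrams and $x=\gamma\in\mathfrak{S}_{2k}$. The claim then reduces to
$$
\langle C,\gamma C'\rangle=\langle \gamma^{-1}C,C'\rangle,
$$
which is a direct consequence of the $\mathfrak{S}_{2k}$-invariance recorded earlier in the excerpt: applying $\gamma^{-1}$ simultaneously to both entries of $\langle C,\gamma C'\rangle$ yields $\langle \gamma^{-1}C,C'\rangle$. Extending bilinearly over $\Q\mathfrak{S}_{2k}$ and $\Q\mathcal{D}^\ell(2k)$ gives the general statement.

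I do not anticipate any real obstacle here; the content of both parts is bookkeeping on the group algebra side. The only step worth being careful about is the order reversal $\widehat{a_\lambda b_\lambda}=\hat{b}_\lambda\hat{a}_\lambda$, which is what forces $c_\lambda$ and $c'_\lambda$ to swap under $\hat{\cdot}$ rather than each being fixed — this is precisely what makes the symmetrizers interact with the pairing in an adjoint fashion, and will be the key ingredient when $\mathrm{(ii)}$ is applied together with $\mathrm{(i)}$ to identify eigenspaces of $\mathcal{M}$ inside $\Q\mathcal{D}^\ell(2k)$ with images of Young symmetrizers.
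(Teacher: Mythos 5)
Your proposal is correct and is exactly the paper's argument, merely written out in full: the paper disposes of $\mathrm{(i)}$ as following ``from the definitions'' (your subgroup/inverse-bijection and $\mathrm{sgn}(\gamma^{-1})=\mathrm{sgn}(\gamma)$ observations, plus the anti-homomorphism property forcing $c_\lambda\leftrightarrow c'_\lambda$) and of $\mathrm{(ii)}$ as following from the $\mathfrak{S}_{2k}$-equivariance of $\langle\ ,\ \rangle$, which is precisely your reduction to $\langle C,\gamma C'\rangle=\langle\gamma^{-1}C,C'\rangle$ followed by bilinear extension. No gaps.
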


\begin{proof}
$\mathrm{(i)}$ follows from the definitions and  $\mathrm{(ii)}$
follows from the fact that the pairing $\langle\ ,\ \rangle$ is $\mathfrak{S}_{2k}$-equivariant.
\end{proof}

Now, for each Young diagram $\lambda$ with $k$ boxes, we have a map
$$
\left(\Q[\mathfrak{S}_{2k}]\, c'_{2\lambda} \right) \otimes \mathcal{D}^\ell(2k) \rightarrow \Q\mathcal{D}^\ell(2k).
$$
By Proposition \ref{prop:E} and the property of the Young symmetrizer,
the image of this map is precisely the subspace $E_\lambda\subset \Q\mathcal{D}^\ell(2k)$
which is the unique summand isomorphic to $(2\lambda)_{\mathfrak{S}_{2k}}$.

\begin{prop}
Two subspaces $E_\lambda$ and $E_\mu$ of $\Q\mathcal{D}^\ell(2k)$ are mutually orthogonal
to each other whenever $\lambda\not= \mu$. Namely we have
$$
\langle E_\lambda, E_\mu\rangle=\{0\}.
$$
\label{prop:lmortho}
\end{prop}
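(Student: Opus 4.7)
The plan is to combine the $\mathfrak{S}_{2k}$-equivariance of the pairing $\langle\ ,\ \rangle$ (Lemma \ref{lem:hat}(ii)) with the classical annihilation property of Young symmetrizers on non-isomorphic irreducibles. The remark preceding the proposition identifies $E_\lambda$ as the image of $(\Q[\mathfrak{S}_{2k}]\,c'_{2\lambda})\otimes \mathcal{D}^\ell(2k)\to \Q\mathcal{D}^\ell(2k)$, so $E_\lambda$ is spanned by elements of the form $y\,c'_{2\lambda}\,C$ with $y\in\Q[\mathfrak{S}_{2k}]$ and $C\in\mathcal{D}^\ell(2k)$. By bilinearity it suffices to verify $\langle y\,c'_{2\lambda}\,C,\,\eta\rangle=0$ for such a generator and an arbitrary $\eta\in E_\mu$.

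The first step is to transpose $y\,c'_{2\lambda}$ across the pairing. Since $\hat{\ }$ is an anti-involution with $\hat{c'}_{2\lambda}=c_{2\lambda}$ by Lemma \ref{lem:hat}(i), we have $\widehat{y\,c'_{2\lambda}}=c_{2\lambda}\,\hat{y}$, and the equivariance relation $\langle x\,\xi,\eta\rangle=\langle \xi,\hat{x}\,\eta\rangle$ extracted from Lemma \ref{lem:hat}(ii) yields
$$
\langle y\,c'_{2\lambda}\,C,\,\eta\rangle \;=\; \langle C,\,c_{2\lambda}\,\hat{y}\,\eta\rangle.
$$
Because $E_\mu$ is a $\Q[\mathfrak{S}_{2k}]$-submodule of $\Q\mathcal{D}^\ell(2k)$, the vector $\hat{y}\,\eta$ still lies in $E_\mu$.

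The crucial step is then to invoke the classical fact that if $W$ is an irreducible $\Q[\mathfrak{S}_{2k}]$-module which is \emph{not} isomorphic to $\nu_{\mathfrak{S}_{2k}}$, then the Young symmetrizer $c_\nu$ acts as zero on $W$. Indeed, any element $w_0\in W$ determines a $\Q[\mathfrak{S}_{2k}]$-equivariant map $\Q[\mathfrak{S}_{2k}]\,c_\nu\cong \nu_{\mathfrak{S}_{2k}} \to W$, $x\mapsto x\cdot w_0$, which must vanish identically by Schur's lemma, so in particular $c_\nu\cdot w_0=0$. Applied with $\nu=2\lambda$ and $W=E_\mu\cong(2\mu)_{\mathfrak{S}_{2k}}$, the hypothesis $\lambda\neq\mu$ forces $2\lambda\neq 2\mu$, so $c_{2\lambda}\,\hat{y}\,\eta=0$ and the pairing vanishes as required.

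I do not anticipate a serious obstacle; the argument is essentially a direct translation of the standard orthogonality of Young symmetrizers into the chord-diagram setting, with Lemma \ref{lem:hat} serving as the bridge. The only point requiring care is that after transposition it is the symmetrizer $c_{2\lambda}$ rather than $c'_{2\lambda}$ that appears on the right, which is precisely why both flavors are prepared in Lemma \ref{lem:hat}(i).
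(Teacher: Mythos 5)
Your proof is correct and takes essentially the same route as the paper: both arguments transpose the Young symmetrizer across the pairing via Lemma \ref{lem:hat} and conclude from the classical orthogonality of Young symmetrizers for distinct diagrams, the only difference being that the paper invokes the group-algebra identity $c_{2\mu}c_{2\lambda}=0$ directly while you establish the equivalent module-level annihilation $c_{2\lambda}E_\mu=0$ via Schur's lemma. Your bookkeeping with a general coefficient $y\in\Q[\mathfrak{S}_{2k}]$ in the generators $y\,c'_{2\lambda}C$ is, if anything, slightly more careful than the paper's assertion that the elements $c_{2\lambda}C$ alone generate $E_\lambda$.
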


\begin{proof}
It suffices to show that, for any $C, D\in\mathcal{D}^\ell(2k)$
$$
\langle c_{2\lambda} C, c'_{2\mu} D\rangle=0
$$
because the elements of the form $c_{2\lambda} C$ generate the subspace $E_\lambda$ for any $\lambda$
and the same is true if we replace $c_{2\lambda} C$ by $c'_{2\lambda} C$.
On the other hand, by Lemma \ref{lem:hat} we have
\begin{align*}
\langle c_{2\lambda} C, c'_{2\mu} D\rangle&=\langle \hat{c'}_{2\mu} c_{2\lambda} C,  D\rangle\\
&=\langle {c}_{2\mu} c_{2\lambda} C,  D\rangle\\
&=0.
\end{align*}
The last equality follows because, as is well known, ${c}_{\mu} c_{\lambda}=0$
whenever $\mu\not=\lambda$.
This completes the proof.
\end{proof}

\begin{prop}
The subspace $E_\lambda\subset \Q\mathcal{D}^\ell(2k)$ is an eigenspace for the linear mapping 
$\mathcal{M}$ for any $\lambda$.
\label{prop:eigenspace}
\end{prop}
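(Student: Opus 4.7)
The plan is to reduce the proposition to an application of Schur's lemma, using two ingredients that are already in hand: the $\mathfrak{S}_{2k}$-invariance of the pairing $\langle\ ,\ \rangle$ stated in the introduction, and the multiplicity-free irreducible decomposition of $\Q\mathcal{D}^\ell(2k)$ established in Proposition~\ref{prop:E}.

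First I would verify that the endomorphism $\mathcal{M}$ is $\mathfrak{S}_{2k}$-equivariant. For any $\gamma\in\mathfrak{S}_{2k}$ and $C\in\mathcal{D}^\ell(2k)$, the $\mathfrak{S}_{2k}$-invariance of the pairing gives $\langle \gamma C,D\rangle=\langle C,\gamma^{-1}D\rangle$, so
$$
\mathcal{M}(\gamma C)=\sum_{D\in\mathcal{D}^\ell(2k)}\langle \gamma C,D\rangle\,D=\sum_{D}\langle C,\gamma^{-1}D\rangle\,D=\gamma\sum_{D'}\langle C,D'\rangle\,D'=\gamma\,\mathcal{M}(C),
$$
after reindexing by $D'=\gamma^{-1}D$. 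By linearity, $\mathcal{M}$ commutes with the full $\mathfrak{S}_{2k}$-action on $\Q\mathcal{D}^\ell(2k)$.

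Next, by Proposition~\ref{prop:E} we have the decomposition $\Q\mathcal{D}^\ell(2k)\cong\bigoplus_{|\lambda|=k} E_\lambda$ in which the summands $E_\lambda\cong (2\lambda)_{\mathfrak{S}_{2k}}$ are pairwise non-isomorphic irreducible $\mathfrak{S}_{2k}$-modules, each appearing with multiplicity one. Since the irreducible representations of the symmetric group are absolutely irreducible and defined over $\Q$, Schur's lemma applies and forces any $\mathfrak{S}_{2k}$-equivariant endomorphism of $\Q\mathcal{D}^\ell(2k)$ to preserve each isotypic component $E_\lambda$ and to act there by a scalar. Applied to $\mathcal{M}$, this yields $\mathcal{M}(E_\lambda)\subseteq E_\lambda$ and $\mathcal{M}|_{E_\lambda}=\mu_\lambda\cdot\mathrm{id}$ for some scalar $\mu_\lambda\in\Q[g]$, which is exactly the statement that $E_\lambda$ is an eigenspace of $\mathcal{M}$.

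There is no real obstacle here: once the equivariance of $\mathcal{M}$ is recorded, the eigenspace statement is a two-line consequence of Proposition~\ref{prop:E} via Schur. The subtlety worth flagging is only conceptual, namely that $\mathcal{M}$ depends on $g$ through $\langle\ ,\ \rangle$ while the subspaces $E_\lambda$ are defined purely from the $\mathfrak{S}_{2k}$-module structure, so the decomposition diagonalizes $\mathcal{M}$ simultaneously for all $g$, with eigenvalues $\mu_\lambda=\mu_\lambda(g)\in\Q[g]$. The actual identification of these polynomials with the product formula in Theorem~\ref{thm:lcd} is a separate computation and not part of the present proposition.
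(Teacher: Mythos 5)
Your proof is correct, and it differs from the paper's on exactly one of the two steps. The equivariance computation for $\mathcal{M}$ is the same in both. But where you obtain the invariance $\mathcal{M}(E_\lambda)\subseteq E_\lambda$ abstractly, from the multiplicity-one decomposition of Proposition \ref{prop:E} plus Schur's lemma (correctly noting that symmetric-group irreducibles are absolutely irreducible over $\Q$, so the equivariant map $\mathcal{M}|_{E_\lambda}$ is a scalar), the paper derives it from the pairing itself: it shows $\langle\mathcal{M}(E_\lambda),E_\mu\rangle=0$ for $\mu\neq\lambda$ by an explicit Young-symmetrizer computation, using the anti-involution of Lemma \ref{lem:hat} and the identity $c_{2\mu}c_{2\lambda}=0$, and combines this with the orthogonality of the summands (Proposition \ref{prop:lmortho}); it then concludes, as you do, from equivariance and irreducibility. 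Your route buys brevity and robustness: the paper's inference ``orthogonal to every $E_\mu$ with $\mu\neq\lambda$, hence contained in $E_\lambda$'' tacitly uses nondegeneracy of $\langle\ ,\ \rangle$, which at a fixed small genus can fail --- the form is only positive semi-definite, and for instance $E_{[k]}$ lies entirely in the radical when $g<k$ since $\mu_{[k]}(g)=2g(2g-2)\cdots(2g-2k+2)=0$ --- so that step is cleanest when read over $\Q[g]$ with $g$ an indeterminate, where the Gram determinant is a nonzero polynomial; your Schur argument involves neither the pairing nor $g$, which is why, as you flag, the same decomposition diagonalizes $\mathcal{M}$ simultaneously for all $g$. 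What the paper's computation buys in exchange is thematic coherence: everything stays expressed through the metric under study, in a form adapted to the subsequent eigenvalue computation in Theorem \ref{thm:lcd}. Indeed, the Remark following the paper's proof explicitly acknowledges your alternative, observing that the orthogonality of the $E_\lambda$ can be proved by a standard representation-theoretic argument without mentioning the inner product, the pairing-based proof being an adaptation of it.
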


\begin{proof}
First we prove that $E_\lambda$ is an $\mathcal{M}$-invariant subspace, 
namely $\mathcal{M}(E_\lambda)\subset E_\lambda$. For this, it suffices to show that
$\mathcal{M}(E_\lambda)$ is orthogonal to $E_\mu$ for any $\mu\not=\lambda$.
This is because we already know that the whole space
$\Q\mathcal{D}^\ell(2k)$ can be written as
$$
\Q\mathcal{D}^{\ell}(2k)= \bigoplus_{|\lambda |=k}
E_{\lambda}\quad \text{(orthogonal direct sum)}
$$
(see Proposition \ref{prop:E}  and Proposition \ref{prop:lmortho}).
Recall that $\mathcal{M}(E_\lambda)$ is generated by the elements of the form
$
\sum_{D} \langle c_{2\lambda} C_1,D\rangle D
$
while $E_\mu$ is generated by the elements of the form
$
c'_{2\mu}\, C_2
$
where $C_1, C_2\in \mathcal{D}^{\ell}(2k)$.
Now we fix $C_1,C_2\in\mathcal{D}^{\ell}(2k)$ and compute
\begin{align*}
\left\langle \sum_{D} \langle c_{2\lambda} C_1,D\rangle D, c'_{2\mu}\, C_2\right\rangle &=
\sum_{D} \langle c_{2\lambda} C_1,D\rangle  \langle D, c'_{2\mu}\, C_2\rangle\\
&=\sum_{D}  \langle c_{2\lambda} C_1,D\rangle \langle c_{2\mu} D, C_2\rangle\\
&=\sum_{D}  \langle c_{2\lambda} C_1,c'_{2\mu} D\rangle \langle D, C_2\rangle\\
&=\sum_{D}  \langle c_{2\mu} c_{2\lambda} C_1, D\rangle \langle D, C_2\rangle\\
&=0
\end{align*}
because $c_{2\mu} c_{2\lambda}=0$. Therefore $\mathcal{M}(E_\lambda)\bot E_\mu$ as claimed.

Next observe that $\mathcal{M}$ is an $\mathfrak{S}_{2k}$-equivariant linear mapping because,
for any $\gamma\in \mathfrak{S}_{2k}$ and $C\in\mathcal{D}^{\ell}(2k)$, we have
\begin{align*}
\mathcal{M}(\gamma\, C)&=\sum_{D\in\mathcal{D}^{\ell}(2k)} \langle \gamma\, C,D\rangle D\\
&=\sum_{D\in\mathcal{D}^{\ell}(2k)} \langle C,\gamma^{-1}\, D\rangle D\\
&=\sum_{D\in\mathcal{D}^{\ell}(2k)} \langle C,D\rangle \gamma\, D\\
&=\gamma (\mathcal{M}(C)).
\end{align*}
It follows that any eigenspace of $\mathcal{M}$ is an $\mathfrak{S}_{2k}$-submodule.

Thus $E_\lambda$ is an $\mathcal{M}$-invariant and
$\mathfrak{S}_{2k}$-irreducible subspace of $\Q\mathcal{D}^{\ell}(2k)$. 
We can now conclude that it is an eigenspace as claimed.
\end{proof}

\begin{remark}
It follows from Theorem \ref{thm:aalpha} that, besides our inner product, the
subspaces $E_\lambda\subset \Q\mathcal{D}^\ell(2k)$ are mutually orthogonal to each other also with respect to
the {\it Euclidean metric} on $\Q\mathcal{D}^\ell(2k)$ induced by taking $\mathcal{D}^\ell(2k)$ as
an orthonormal basis. Indeed, this fact can be proved
by a standard argument in the theory of representations of symmetric groups
{\it without} mentioning our inner product on $\Q\mathcal{D}^\ell(2k)$.
The above argument is an adaptation of this to the case of our symmetric pairing
$\langle\ , \ \rangle$ which depends on $g$. It follows that the answer should be given by rescaling
eigenspaces by certain factors. Thus, in addition to the positivity, the essential point of our work is the
explicit determination of these factors as below.
\end{remark}

\begin{proof}[Proof of Theorem $\ref{thm:lcd}$]
In view of Proposition \ref{prop:E} and the arguments following it, the proof for the latter part
of this theorem is finished so that it remains to prove the former 
part.

We know already that each space $E_\lambda$ is an eigenspace (see Proposition \ref{prop:eigenspace}).
Therefore for any choice of a tableau on $2\lambda$ and for any $C\in \mathcal{D}^\ell(2k)$,
we should have an equality
$$
\mathcal{M}(c'_{2\lambda} C)=\sum_{D\in  \mathcal{D}^\ell(2k)} \langle c'_{2\lambda} C, D\rangle\, D
=\mu_\lambda c'_{2\lambda} C
$$
where $\mu_\lambda$ denotes the eigenvalue of $E_\lambda$. It follows that, for any $C, D$, we have
$$
\langle c'_{2\lambda} C, D\rangle=\text{$\mu_\lambda\cdot$ (coefficient of D in $c'_{2\lambda} C$)}.
$$
Our task is to determine $\mu_\lambda$ by the above equality and for that we may choose 
$C, D$ arbitrarily. We choose $C=D=C_0(2k)$ where $C_0(2k)$ denotes the following linear chord diagram
$$
C_0(2k)=\{\{1,2\},\cdots,\{2k-1,2k\}\}\in \mathcal{D}^\ell(2k).
$$
We set
$$
v_{2\lambda}(2k)=c'_{2\lambda} C_0(2k),\quad m_\lambda=\text{coefficient of $C_0(2k)$ in $v_{2\lambda}(2k)$}
$$
and we determine $\mu_\lambda$ from the identity
$$
\langle v_{2\lambda}(2k),C_0(2k)\rangle=m_\lambda \mu_{\lambda}.
$$

The rest of the proof is divided into three cases.

\vspace{3mm}
\noindent
$\mathrm{(I)}\ $ The case of $\lambda=[k]$.

In this case, the group $P_{2\lambda}$ is the whole group while $Q_{2\lambda}$ is the trivial
group. Hence, for any tableau $\tau$, we have
$$
v_{[2k]}=c_{[2k]}\, C_0(2k)=2^k k! \sum_{C\in \mathcal{D}^\ell(2k)} C
$$
so that $m_{[k]}=2^k k!$. Furthermore it was already proved in \cite{morita03} (proof of
Proposition 4.1) that
$$
\left\langle \sum_{C\in \mathcal{D}^\ell(2k)} C, C_0(2k)\right\rangle =2g(2g-2)\cdots(2g-2k+2).
$$
Therefore
$$
\mu_{[k]}=2g(2g-2)\cdots(2g-2k+2)
$$
as claimed.

\vspace{3mm}
\noindent
$\mathrm{(II)}\ $ The case of $\lambda=[1^k]$.

The case $k=1$ was already treated in $\mathrm{(I)}\ $ so that we assume $k\geq 2$ and compute
the value of
$$
\langle c'_{[2^k]} C_0, C_0\rangle
$$
step by step as follows. We have
$$
c'_{[2^k]}=b_{[2^k]}a_{[2^k]}
$$
and we choose a standard tableau $\tau$ on $[2^k]$ as below 
$$
\begin{bmatrix}
1 & 2\\
3 & 4\\
\cdots & \cdots\\
2k-1 & 2k
\end{bmatrix}.
$$
Then the element $b_{[2^k]}$ can be described as
$$
b_{[2^k]}=t_k t_{k-1}\cdots t_3 t_2
$$
where
\begin{align*}
t_2&=\{1-(13)\}\{1-(24)\},\ t_3=\{1-(15)-(35)\}\{1-(26)-(46)\},\cdots,\\
t_k&= \{1-(1,2k-1)-(3,2k-1)-\cdots -(2k-3,2k-1)\}\cdot\\
&\hspace{1cm} \{1-(2,2k)-(4,2k)-\cdots -(2k-2,2k)\}.
\end{align*}
We define
$$
c_1=a_{[2^k]},\quad c_s=t_s\cdots t_2 a_{[2^k]}\quad (s=2,\cdots,k)
$$ 
and compute
$$
\langle c_s C_0, C_0\rangle,\quad m_s=\text{coefficient of $C_0$ in $c_s C_0$}
$$
inductively. Here the essential point in our computation is Lemma \ref{lem:qij}. 
In fact, we use this lemma repeatedly in the whole of later argument and
we find a ``perfect canceling" in sgns of various operations on linear chord diagrams. 

In the first case $s=1$,
the group $P_{[2^k]}\cong \mathfrak{S}_2^k$ acts on $C_0$ trivially. Therefore
$$
c_1 C_0=a_{[2^k]} C_0=2^k C_0,\quad m_1=2^k
$$
and we have
$$
\langle c_1 C_0, C_0\rangle=m_1 (2g)^k.
$$
Next we compute the case $k=2$ 
$$
\langle c_2 C_0, C_0\rangle=2^k \langle t_1 C_0,C_0\rangle,\quad \text{$m_2$= coefficient of $C_0$ in $c_2 C_0$}.
$$
Among the $4$ elements in
$$
t_2=\{1-(13)\}\{1-(24)\}=1+(13)(24)-(13)-(24)
$$
exactly $2$ elements, namely $1, (13)(24)$ preserve $C_0$ unchanged.
Therefore $m_2=2 m_1$ and if we apply Lemma \ref{lem:qij}, we can conclude that
$$
\langle t_2 C_0,C_0\rangle=(2g)\{2\cdot 2g-2\cdot (-1)\}(2g)^{k-2}=(2g)2(2g+1) (2g)^{k-2}.
$$
Therefore we have
$$
\langle t_2 C_0,C_0\rangle=m_2 (2g)(2g+1) (2g)^{k-2}.
$$
Next we compute 
$$
\langle c_3 C_0, C_0\rangle=2^k \langle t_3 t_2 C_0,C_0\rangle,\quad \text{$m_3$= coefficient of $C_0$ in $c_3 C_0$}.
$$
Among the $9$ elements in
$$
t_3=\{1-(15)-(35)\}\{1-(26)-(46)\}
$$
exactly $3$ elements, namely $1, (15)(26), (35)(46)$ preserve $C_0$ unchanged.
Therefore $m_3=3 m_2$ and if we apply Lemma \ref{lem:qij}, we can conclude that
$$
\langle t_3 t_2 C_0,C_0\rangle=(2g)2(2g+1)\{3\cdot 2g-(9-3)\cdot(-1)\}(2g)^{k-3}=(2g)2(2g+1)3(2g+2) (2g)^{k-3}.
$$
Therefore we have
$$
\langle t_3 t_2 C_0,C_0\rangle=m_3 (2g)(2g+1) (2g+2)(2g)^{k-3}.
$$
Continuing the computation like this, we can prove that 
the equalities
\begin{align*}
\langle t_s\cdots t_1 C_0,C_0\rangle&=m_s (2g)(2g+1) (2g+2)\cdots (2g+s)(2g)^{k-s-1}\\
m_s&=2^k s!
\end{align*}
hold for all $s=1,\cdots,k$. 
We check the inductive argument from the $s$-th step to the next $(s+1)$-th step.

Among the $(s+1)^2$ elements in the expansion of
\begin{align*}
&\{1-(1,2s+1)-(3,2s+1)-\cdots -(2s-1,2s+1)\}\\
&\{1-(2,2s+2)-(4,2s+2)-\cdots -(2s,2s+2)\}
\end{align*}
exactly the following $(s+1)$ elements preserve $C_0$ unchanged, namely
$$
1, (1,2s+1)(2,2s+2),\cdots, (2s-1,2s+1)(2s,2s+2).
$$
Therefore $m_{s+1}=(s+1) m_{s}$ and if we apply Lemma \ref{lem:qij}, we can conclude that
\begin{align*}
&\langle t_{s+1} t_s\cdots t_2 C_0,C_0\rangle\\
&=m_s(2g)(2g+1)\cdots(2g+s-1)\{(s+1)\cdot 2g-((s+1)^2-(s+1))\cdot(-1)\}(2g)^{k-s}\\
&=(s+1)m_s 2g(2g+1)\cdots(2g+s) (2g)^{k-s}.
\end{align*}
Therefore we have
$$
\langle t_{s+1}\cdots t_2 C_0,C_0\rangle=m_{s+1} (2g)(2g+1)\cdots (2g+s)(2g)^{k-s-1}
$$
as required. 

\vspace{3mm}
\noindent
$\mathrm{(III)}\ $ The general case.

Finally we consider the general case. Roughly speaking, we combine the above two arguments, one is
the case $\mathrm{(I)}$ and the other is the case $\mathrm{(II)}$, to treat the general case.
Let $\lambda=[\lambda_1\lambda_2\cdots\lambda_h]$ be a general Young diagram with $|\lambda|=k$.
Choose a standard tableau on $2\lambda$, say $1,2,\cdots,2\lambda_1-1,2\lambda_1$ on the
first row, $2\lambda_1+1,2\lambda_1+2,\cdots,2\lambda_1+2\lambda_2-1,2\lambda_1+2\lambda_2$ on the
second row, and so on. Then we can write
$$
P_{2\lambda}=P_{2\lambda_1}\times P_{2\lambda_2}\times\cdots\times P_{2\lambda_h}
$$
where $P_{2\lambda_i}\ (i=1,\cdots,h)$ denotes the symmetric group consisting of permutations of 
labels attached to the boxes in the $i$-th row of $2\lambda$.
Thus we have
$$
a_{2\lambda}=a_{2\lambda_1}a_{2\lambda_2}\cdots a_{2\lambda_h}.
$$
If we apply the result of case $\mathrm{(I)}\ $ to each row of $2\lambda$,  then we obtain
\begin{align*}
\langle a_{2\lambda} C_0,C_0\rangle=&2^{\lambda_1}\lambda_1! \cdot 2g(2g-2)\cdots (2g-2\lambda_1+2)\cdot\\
&2^{\lambda_2}\lambda_2! \cdot 2g(2g-2)\cdots (2g-2\lambda_2+2)\cdot\\
&\cdots\\
&2^{\lambda_h}\lambda_h! \cdot 2g(2g-2)\cdots (2g-2\lambda_h+2)
\end{align*}
and
$$
\text{coefficient of $C_0$ in $a_{2\lambda} C_0\ = \prod_{i=1}^h 2^{\lambda_i}\lambda_i!=2^k \prod_{i=1}^h \lambda_i!$}.
$$

Next we consider the element $b_{2\lambda}$. The group $Q_{2\lambda}$ can be described as
$$
Q_{2\lambda}=Q_{2\lambda_1}\times Q_{2\lambda_1-1}\times\cdots\times Q_2\times Q_1
$$
where $Q_j\ (j=1,2,\cdots,2\lambda_1)$ denotes the symmetric group consisting of permutations of the
labels attached to the $j$-th column of $2\lambda$.
Here we mention that $2\lambda_1$ is the number of columns of $2\lambda$ and
also both of $Q_1$ and $Q_2$ are isomorphic to $\mathfrak{S}_{h}$.
Now we decompose the  element $b_{2\lambda}$ as
$$
b_{2\lambda}=b_{\lambda_1} \cdots b_2b_1
$$
where
$$
b_j=\prod_{\gamma\in Q_{2j-1}} (1+\mathrm{sgn}\,\gamma) \prod_{\delta\in Q_{2j}} (1+\mathrm{sgn}\,\delta). 
$$
First we consider the element $b_1a_{2\lambda} C_0$.
It can be seen that
we can apply the result of case $\mathrm{(II)}\ $ to the first two columns of $2\lambda$ from the left to obtain
\begin{align*}
\langle b_1 a_{2\lambda} C_0,C_0\rangle
=&\, h!\, 2^{\lambda_1}\lambda_1! \cdot 2g(2g-2)\cdots (2g-2\lambda_1+2)\cdot\\
&2^{\lambda_2}\lambda_2! \cdot [(2g+1)(2g-1)]\cdot (2g-4)\cdots (2g-2\lambda_2+2)\cdot\\
&\cdots\\
&2^{\lambda_h}\lambda_h! \cdot [(2g+h-1)(2g+h-3)]\cdot (2g-4)\cdots (2g-2\lambda_h+2)\\
=&\, h! \,2^k \prod_{i=1}^h \lambda_i!
\cdot 2g(2g-2)\cdots (2g-2\lambda_1+2)\cdot\\
& [(2g+1)(2g-1)]\cdot (2g-4)\cdots (2g-2\lambda_2+2)\cdot\\
&\cdots\\
& [(2g+h-1)(2g+h-3)]\cdot (2g-4)\cdots (2g-2\lambda_h+2)\\
=&\, h! \,2^k \prod_{i=1}^h \lambda_i!
\cdot [2g(2g+1)\cdots (2g+h-1)]\cdot
[(2g-2)(2g-1)\cdots (2g+h-3)]\cdot\\
&[(2g-4)(2g-6)]^{\lambda'_2}\cdots [(2g-2\lambda_1+4)(2g-2\lambda_1+2)]^{\lambda'_{\lambda_1}}
\end{align*}
and
$$
\text{coefficient of $C_0$ in $b_1a_{2\lambda} C_0\ = h! \,2^k \prod_{i=1}^h \lambda_i!$}
$$
where $\lambda'=[h\lambda'_2\cdots \lambda'_{\lambda_1}]$ denotes the conjugate Young diagram of $\lambda$.

Next we can apply the result of case $\mathrm{(II)}\ $ to the two columns,
the third and the fourth from the left of $2\lambda$, to obtain
\begin{align*}
\langle b_2 b_1 a_{2\lambda} C_0,C_0\rangle=&h! \lambda'_2! \,2^k \prod_{i=1}^h \lambda_i! 
\cdot 2g(2g-2)\cdots (2g-2\lambda_1+2)\cdot\\
& (2g+1)(2g-1)\cdot [(2g-3)(2g-5)]\cdot (2g-8)\cdots (2g-2\lambda_2+2)\cdot\\
&\cdots\\
& (2g+h-1)(2g+h-3)\cdot [(2g+h-5)(2g+h-7)]\cdot(2g-8)\cdots (2g-2\lambda_h+2)\\
=&\, h! \lambda'_2!\,2^k \prod_{i=1}^h \lambda_i!
\cdot [2g(2g+1)\cdots (2g+h-1)]\cdot
[(2g-2)(2g-1)\cdots (2g+h-3)]\cdot\\
& \cdot [(2g-4)(2g-3)\cdots (2g+\lambda'_2-1)]\cdot
[(2g-6)(2g-5)\cdots (2g+\lambda'_2-3)]\cdot\\
&[(2g-8)(2g-10)]^{\lambda'_3}\cdots [(2g-2\lambda_1+4)(2g-2\lambda_1+2)]^{\lambda'_{\lambda_1}}
\end{align*}
and
$$
\text{coefficient of $C_0$ in $b_2 b_1a_{2\lambda} C_0\ = h!\lambda'_2! \,2^k \prod_{i=1}^h \lambda_i!$}.
$$

Continuing in this way until the last two columns from the right of $\lambda$,
we finally obtain
\begin{align*}
\langle c'_{2\lambda} C_0,C_0\rangle=& m_{2\lambda}\cdot 2g(2g-2)\cdots (2g-2\lambda_1+2)\cdot\\
& (2g+1)(2g-1)\cdots (2g-2\lambda_2+3)\cdot\\
&\cdots\\
&(2g+h-1)(2g+h-3)\cdots (2g+h-2\lambda_h+1)\\
m_{2\lambda}=&\prod_{i=1}^h 2^{\lambda_i}\lambda_i! \prod_{j=1}^{\lambda_1}  \lambda'_j!=2^k \prod_{i=1}^h \lambda_i! \prod_{j=1}^{\lambda_1}  \lambda'_j!.
\end{align*}

Here we would like to emphasize again that an essential use of Lemma \ref{lem:qij} is made in the above computation. 
This completes the proof.
\end{proof}

\section{Topological study of the tautological algebra of the moduli space of curves}

As already mentioned in the introduction, this work was originally motivated by 
a topological study of the tautological algebra of the moduli space of curves.

Let $\mathcal{M}_g$ be the mapping class group of $\Sigma_g$
and let $\mathcal{R}^*(\mathcal{M}_g)$ denote the tautological algebra {\it in cohomology} of 
$\mathcal{M}_g$ . Namely it is the subalgebra of $H^*(\mathcal{M}_g;\Q)$
generated by the Mumford-Morita-Miller tautological classes $e_i\in H^{2i}(\mathcal{M}_g;\Q)$.
We also consider the mapping class group $\mathcal{M}_{g,*}$ of 
$\Sigma_g$ relative to the base point $*\in\Sigma_g$ and the corresponding 
tautological algebra $\mathcal{R}^*(\mathcal{M}_{g,*})$. In this case,
it is the subalgebra of $H^*(\mathcal{M}_{g,*};\Q)$ generated by 
$e_i$ and the Euler class $e\in H^2(\mathcal{M}_{g,*};\Q)$.

In the context of algebraic geometry, there is a more refined form of the tautological algebra. 
Namely, if we denote by $\mathbf{M}_g$ the moduli space of curves of genus $g$, then 
its tautological algebra, denoted by $\mathcal{R}^*(\mathbf{M}_g)$, is defined to be the
subalgebra of the Chow algebra $\mathcal{A}^*(\mathbf{M}_g)$ of $\mathbf{M}_g$ generated
by the tautological classes $\kappa_i\in \mathcal{A}^i(\mathbf{M}_g)$ introduced by Mumford.
There is a natural surjection $\mathcal{R}^*(\mathbf{M}_g)\rightarrow \mathcal{R}^*(\mathcal{M}_g)$
which sends $\kappa_i$ to $(-1)^{i+1} e_i$.

There have been obtained many interesting results about the tautological algebras,
Faber's beautiful conjecture given in \cite{faber} together with his related works
being a strong motivation for them.
In particular, Faber and Zagier proposed a set of conjectural relations in
$\mathcal{R}^*(\mathbf{M}_g)$ and Faber in fact proved that it
gives an affirmative answer to his conjecture including the
most difficult part, namely the perfect pairing property, up to genus $g\leq 23$.
Very recently, Pandharipande and Pixton \cite{pp} proved a remarkable
result that the Faber-Zagier relations are actual relations in $\mathcal{R}^*(\mathbf{M}_g)$ for all $g$.
However the structure of $\mathcal{R}^*(\mathbf{M}_g)$ for general $g$ remains mysterious.
For more details of these developments, we refer to Faber's recent survey paper 
\cite{faberpcmi} as well as the references therein.
The main tools in these works belong to algebraic geometry which are quite powerful.
A theorem of Looijenga given in \cite{looijenga}, which is one of the earliest
results in this area, is a typical example. 
See also recent works \cite{yin}\cite{yin2} of 
Yin for the case of $\mathcal{R}^*(\mathbf{M}_{g,1})$.

On the other hand, there are a few topological approaches to this subject.
The first one was given in our papers \cite{moritaj1}\cite{moritaj2}.
Recently Randal-Williams \cite{rw} greatly generalized this 
to obtain very interesting results.  See also 
Grigoriev \cite{grigoriev} for the case of higher dimensional manifold bundles.

In \cite{morita03} we adopted a closely related but a bit different 
approach, namely a combination of topology and representation theory. 
In the next section we extend this approach and
give a systematic way of producing relations among
the $\mathrm{MMM}$ tautological classes in $\mathcal{R}^*(\mathcal{M}_g)$ and $\mathcal{R}^*(\mathcal{M}_{g,*})$.
For that, in the remaining part of this section, we summarize former results.

By analyzing the natural action of $\mathcal{M}_g$ on the {\it third} nilpotent quotient of
$\pi_1\Sigma_g$,
in \cite{morita01} we constructed the following commutative diagram
\begin{equation}
\begin{CD}
\pi_1\Sigma_g @>>> [1^2]_{\mathrm{Sp}}\tilde{\times} H_\Q \\
@VVV @VVV\\
\mathcal{M}_{g,*}   @>{\rho_2}>> (([1^2]^{\text{torelli}}_{\mathrm{Sp}}\oplus [2^2]_{\mathrm{Sp}})\tilde{\times} \wedge^3 H_\Q)\rtimes
\mathrm{Sp}(2g,\Q) \\
@VV{p}V @VVV \\
\mathcal{M}_g @>{\bar{\rho}_2}>> ([2^2]_{\mathrm{Sp}}\tilde{\times} U_\Q)\rtimes
\mathrm{Sp}(2g,\Q).
\end{CD}
\label{eq:rho2}
\end{equation}
This extends our earlier work in \cite{morita96} in which we treated the case of the action on the second nilpotent quotient.
Here $U_\Q=\wedge^3H_\Q/H_\Q$ and $[2^2]_{\mathrm{Sp}}$ is the unique $\mathrm{Sp}$-representation 
corresponding to the Young diagram $[2^2]$ which appears in both of $\wedge^2 U^*_\Q$ 
and $\wedge^2(\wedge^3 H^*_\Q)$.
Also $([2^2]_{\mathrm{Sp}})$ denotes the ideal generated by $[2^2]_{\mathrm{Sp}}$.
$(([1^2]^{\text{torelli}}_{\mathrm{Sp}}\oplus [2^2]_{\mathrm{Sp}})$ is defined similarly
(see \cite{morita01} for details). 

In our joint works with Kawazumi \cite{kam}\cite{kamp}, we determined the homomorphisms
$\rho^*_2, \bar{\rho}^*_2$ in cohomology which are induced by the representations
$\rho_2,\bar{\rho}_2$ in the above diagram \eqref{eq:rho2}. More precisely, we proved that the images of them
are precisely the tautological algebras and also that these representations are isomorphisms in the
stable range of Harer \cite{harer} (so that the images are in fact isomorphic to
the whole rational cohomology algebras of the mapping class groups
by the definitive result of Madsen and Weiss \cite{mw}). 
In the proof of these results, the work of Kawazumi in \cite{kaw}
played a crucial role.
Thus we have the following commutative diagram

\begin{equation}
\begin{CD}
 \left(\wedge^*(\wedge^3 H^*_\Q)/([1^2]^{\text{torelli}}_{\mathrm{Sp}}\oplus [2^2]_{\mathrm{Sp}})\right)^{\mathrm{Sp}}
@>{\rho_2^*}>{\text{stably $\cong$}}> \mathcal{R}^*(\mathcal{M}_{g,*}) \\
@AAA @AA{p^*}A\\
\left(\wedge^*U^*_\Q/([2^2]_{\mathrm{Sp}})\right)^{\mathrm{Sp}}
@>{\bar{\rho}_2^*}>{\text{stably $\cong$}}> \mathcal{R}^*(\mathcal{M}_{g}) .
\end{CD}
\label{eq:rho2*}
\end{equation}
Here $([2^2]_{\mathrm{Sp}})$ denotes the ideal generated by $[2^2]_{\mathrm{Sp}}$.
$(([1^2]^{\text{torelli}}_{\mathrm{Sp}}\oplus [2^2]_{\mathrm{Sp}})$ is defined similarly.
Furthermore we have given explicit formulas for the homomorphisms $\rho_2^*, \bar{\rho}_2^*$
in terms of graphs which we now recall.

Let $\mathcal{G}$ be the set of isomorphism classes of {\it connected} trivalent graphs
and let $\Q[\Gamma;\Gamma\in\mathcal{G}]$ denote the polynomial algebra generated by
$\mathcal{G}$. We define the degree of a graph $\Gamma\in\mathcal{G}$ to be the number
of vertices of it. Thus the subspace $\Q[\Gamma;\Gamma\in\mathcal{G}]^{(2k)}$ consisting
of homogeneous polynomials of degree $2k$ can be naturally identified with the linear space
spanned by trivalent graphs with $2k$ vertices. Also let $\mathcal{G}^0$
be the subset of $\mathcal{G}$ consisting of connected trivalent graphs
{\it without loops}. 
In \cite{morita96}, the following commutative diagram
was constructed  
\begin{equation}
\begin{CD}
 \Q[\Gamma;\Gamma\in\mathcal{G}]
@>>> (\wedge^*(\wedge^3 H^*_\Q))^{\mathrm{Sp}} \\
@A{\iota_g}AA @AAA\\
\Q[\Gamma;\Gamma\in\mathcal{G}^0]
@>>> (\wedge^*U^*_\Q)^{\mathrm{Sp}}
\end{CD}
\label{eq:graph}
\end{equation}
where both of the horizontal homomorphisms are surjective
and bijective in a certain stable range and $\iota_g$ denotes some
{\it twisted} inclusion. 

Then in \cite{kamp}, the following commutative diagram
$$
\begin{CD}
 \Q[\Gamma;\Gamma\in\mathcal{G}]
@>{\alpha}>> \Q[g][e,e_1,e_2,\cdots] \\
@A{\iota_g}AA @AA{\cup}A\\
\Q[\Gamma;\Gamma\in\mathcal{G}^0]
@>{\beta}>> \Q[g][e_1,e_2,\cdots]
\end{CD}
$$
was constructed which realizes the composition of the commutative diagrams \eqref{eq:graph} followed by \eqref{eq:rho2*}
by means of explicit formulas $\alpha$ and $\beta$. In particular, for each trivalent graph 
$\Gamma$ with $2k$ vertices, we have the associated characteristic class
$$
\alpha_\Gamma\in \Q[g][e,e_1,e_2,\ldots]^{(2k)}.
$$
If we fix a genus $g$, then 
the commutative diagram \eqref{eq:rho2*} is isomorphic to the following
$$
\begin{CD}
 \Q[\Gamma;\Gamma\in\mathcal{G}]/\widetilde{W}
@>{\alpha_g}>{\text{stably $\cong$}}> \Q[e,e_1,e_2,\cdots] \\
@A{\iota_g}AA @AA{\cup}A\\
\Q[\Gamma;\Gamma\in\mathcal{G}^0]/W
@>{\beta_g}>{\text{stably $\cong$}}> \Q[e_1,e_2,\cdots]
\end{CD}
$$
where $\widetilde{W}$ denotes certain equivalence relation in $\Q[\Gamma;\Gamma\in\mathcal{G}]$
induced by graphical operations on trivalent graphs including the Whitehead move 
(or the $\mathrm{IH}$ move) and $W$ is a similar one.
We mention that the relation between the ideal $([2^2]_{\mathrm{Sp}})$ and
the $\mathrm{IH}$ move for the lower case 
was first studied by Garoufalidis and Nakamura in \cite{gn}. 

\section{A set of relations in $\mathcal{R}^*(\mathcal{M}_g)$ and $\mathcal{R}^*(\mathcal{M}_{g,*})$}

In this section, we apply Theorem \ref{thm:ortho} to the description of the tautological algebras
$\mathcal{R}^*(\mathcal{M}_g)$ and $\mathcal{R}^*(\mathcal{M}_{g,*})$ in terms of certain symplectic invariant tensors
as well as trivalent graphs,
which we recall in the previous section, and obtain relations among the $\mathrm{MMM}$ tautological classes
in a systematic way. This extends our former work in \cite{morita03}.

The symplectic invariant tensors we are concerned is $\left(\wedge^{2k}(\wedge^3 H_\Q)\right)^{\mathrm{Sp}}$
which can be considered as both a natural quotient space as well as the
subspace  of $(H_\Q^{\otimes 6k})^{\mathrm{Sp}}$. Furthermore this subspace
can be described by means of the natural action of $\mathfrak{S}_{6k}$ on the 
whole space $(H_\Q^{\otimes 6k})^{\mathrm{Sp}}$. It follows that the orthogonal
direct sum decomposition given by Theorem \ref{thm:ortho} induces that
of the space $\left(\wedge^{2k}(\wedge^3 H_\Q)\right)^{\mathrm{Sp}}$
so that we obtain the following commutative diagram
\begin{equation}
\begin{CD}
(H_\Q^{\otimes 6k})^{\mathrm{Sp}}
@>{\text{quotient, contain}}>{\supset}> \left(\wedge^{2k}(\wedge^3 H_\Q)\right)^{\mathrm{Sp}}\\
@| @|\\
\bigoplus_{|\lambda|=3k} U_\lambda
@>{\text{quotient, contain}}>{\supset}> \bigoplus_{|\lambda|=3k} L_\lambda
\end{CD}
\label{eq:qs}
\end{equation}
where $L_\lambda$ denotes certain subspace of $U_\lambda$. 
The totality of the upper quotient homomorphism 
in this diagram for all $k$ can be realized graphically by the following collapsing operation.
\begin{definition}[\cite{morita03}]
For each $C\in\mathcal{D}^\ell(6k)$, let $\Gamma_C$ denote the trivalent  graph defined as follows.
We understand $C$ as a disjoint union of $3k$ chords each of which connects
the two points $i_s, j_s\ (s=1,2,\cdots,3k)$ in the set of vertices $\{1,2,\cdots,6k\}$.
Then $\Gamma_C$ is the graph obtained from $C$ by joining the three vertices
in each of the sets $\{1,2,3\},$ $\{4,5,6\},$ $\cdots,$ $\{6k-2,6k-1,6k\}$ to a single point.
Thus $\Gamma_C$ is a (possibly disconnected) trivalent graph with $2k$ vertices.
By extending this operation linearly, we obtain a linear mapping
$$
\Q\mathcal{D}^\ell(6k)\rightarrow \Q[\Gamma;\Gamma\in\mathcal{G}]^{(2k)}.
$$
For each element $\xi\in \Q\mathcal{D}^\ell(6k)$, we denote its image 
under the above mapping by $\Gamma_\xi$.
\label{def:g1}
\end{definition}

Then we have the following commutative diagram
\begin{equation}
\begin{CD}
\Q\mathcal{D}^\ell(6k)
@>{\text{collapse}}>{\supset}> \Q[\Gamma;\Gamma\in\mathcal{G}]^{(2k)}\\
@| @|\\
\bigoplus_{|\lambda|=3k} E_\lambda
@>{\text{collapse}}>{\supset}> \bigoplus_{|\lambda|=3k} L'_\lambda
\end{CD}
\label{eq:ef}
\end{equation}
where $L'_\lambda$ denotes the subspace corresponding to $L_\lambda$
(and in fact isomorphic to $L_{\lambda'}$).
In view of \eqref{eq:graph}, this is equivalent to the diagram \eqref{eq:qs}.

We need one more terminology to formulate our result and that concerns the 
following direct sum decomposition
\begin{equation}
(\wedge^{2k}(\wedge^3 H_\Q))^{\mathrm{Sp}}
\cong \bigoplus_{p=0}^{2k} (\wedge^{2k-p} U_\Q\otimes \wedge^p H_\Q)^{\mathrm{Sp}}.
\label{eq:up}
\end{equation}
This is based on the fact that $\wedge ^3 H_\Q$ is not irreducible as an $\mathrm{Sp}$-module
but is a sum of two irreducible summands $\wedge^3 H_\Q\cong U_\Q\oplus H_\Q$.
Notice that the above decomposition is an unstable operation in the sense that it depends on the
genus $g$ and also that it cannot be described in terms of the action of the symmetric group.

To realize the above decomposition \eqref{eq:up} graphically,
we recall a few terminologies from our paper \cite{morita03} in suitably modified forms.
Consider the mapping
\begin{equation}
\begin{split}
\wedge^{2k} &(\wedge^3 H_\Q)\ni 
\xi=\xi_1\wedge\cdots\wedge\xi_{2k}\\
&\mapsto
\xi^{(p)}=\sum_{i_1<\cdots <i_p} \xi_1\wedge\cdots\wedge \widetilde{C}\xi_{i_1}\wedge\cdots
\wedge \widetilde{C}\xi_{i_p}\wedge\cdots\wedge \xi_{2k}\in \wedge^{2k}(\wedge^3 H_\Q) 
\end{split}
\label{eq:xip}
\end{equation}
where $\xi_i\in \wedge^3 H_\Q$, $0\leq p\leq 2k$ and $\widetilde{C}:\wedge^3 H_\Q\rightarrow \wedge^3 H_\Q$
is the mapping defined by
$$
\widetilde{C}(u\wedge v \wedge w)=\frac{1}{g-1}\, (\mu(u,v)w+\mu(v,w)u+\mu(w,u)v)\wedge\omega_0.
$$

Now let $\Gamma$ be a trivalent graph with $2k$ vertices  and let $V_\Gamma$ denote the 
set of vertices. Fix a subset $F\subset V_\Gamma$ consisting of $p$ vertices where $0\leq p\leq 2k$.
We denote by $\mathcal{P}(F)$ the set of all ways of choices $\varpi$ where, for each vertex
$v\in F$, $\varpi(v)$ specifies $2$ edges emerging from $v$ out of the $3$ such edges.
Since there are $3$ edges emerging from each vertex, $\mathcal{P}(F)$
has $3^p\binom{2k}{p}$ elements. 

For each $\varpi\in\mathcal{P}(F)$, we define a trivalent graph $\Gamma_\varpi$ and a number
$\ell(\varpi)$ as follows. At each vertex $v\in F$, cut the $2$ edges emerging from it which correspond to $\varpi(v)$
at the one third point along each edge from $v$. Then we connect the two endpoints of the shortened edges emerging
from $v$ to make a loop. At the same time, we also connect the two endpoints of the remaining 
$2$ edges. After performing this operation at every vertex $v\in F$, we are left with a trivalent graph
together with certain number of disjoint circles and/or intervals. We define $\Gamma_\varpi$ to be this
trivalent graph throwing away the other parts. The number $\ell(\varpi)$ is defined to be that of the
disjoint circles. Namely it is the sum of the number of loops and that of double edges ``occurring" 
in $\varpi$.

\begin{definition}[{\it graphical contraction}]
Let $\Gamma$ be a trivalent graph with $2k$ vertices. For each $0\leq p \leq 2k$, we define 
$$
\Gamma^{(p)}=\frac{1}{(2g-2)^p}\sum_{F\subset V_\Gamma} 
\sum_{\varpi\in\mathcal{P}(F)} (-1)^p(-2g)^{\ell(\varpi)} {\Gamma}_\varpi
\ \in \Q[\Gamma;\Gamma\in\mathcal{G}]^{(2k)}
$$
where $F$ runs through all subsets of $V_\Gamma$ consisting of $p$ elements. 
Furthermore we set
$$
\bar{\Gamma}=\Gamma-\Gamma^{(1)}+\Gamma^{(2)}-\cdots+\Gamma^{(2k)}.
$$
Finally we extend these operations on the whole space $\Q[\Gamma;\Gamma\in\mathcal{G}]^{(2k)}$ by
linearity. 
\label{def:g2}
\end{definition}

In summary, each element $\xi\in\Q\mathcal{D}^\ell(6k)$ gives rise to the associated element
$\Gamma_\xi\in \Q[\Gamma;\Gamma\in\mathcal{G}]^{(2k)}$ by Definition \ref{def:g1}
and then we have the following series of elements
$$
\Gamma_\xi=\Gamma_\xi^{(0)}, \Gamma_\xi^{(1)},\cdots,\Gamma_\xi^{(2k)},\bar{\Gamma}_\xi
$$
belonging to $\Q[\Gamma;\Gamma\in\mathcal{G}]^{(2k)}$ by Definition \ref{def:g2}.
Then it was shown in \cite{morita03} that the above graphical operations realize the decomposition
\eqref{eq:up}.

Now we are ready to state our main result in this section.
\begin{thm}
$\mathrm{(i)\ }$ In the tautological algebra $\mathcal{R}^*(\mathcal{M}_{g})$, the following relations hold.
$$
\text{For all $\xi\in E_\lambda \subset \Q\mathcal{D}^\ell(6k)$ with $\lambda_1>g$, 
we have $\alpha_{\bar{\Gamma}_\xi}=0\in\mathcal{R}^{2k}(\mathcal{M}_{g})$}.
$$

$\mathrm{(ii)\ }$ In the tautological algebra $\mathcal{R}^*(\mathcal{M}_{g,*})$, the following relations hold.
$$
\text{For all $\xi\in E_\lambda \subset \Q\mathcal{D}^\ell(6k)$ with $\lambda_1>g$, 
we have $\alpha_{\Gamma^{(p)}_\xi}=0\in\mathcal{R}^{2k}(\mathcal{M}_{g,*})$ for any $0\leq p\leq 2k$}.
$$
\label{thm:rel}
\end{thm}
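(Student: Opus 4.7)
The plan is to propagate the vanishing of $\Phi(\xi)$ for $\xi \in E_\lambda$ with $\lambda_1 > g$ through the chain of maps in the commutative diagrams \eqref{eq:rho2*} and \eqref{eq:graph}, and then read off the vanishing of the corresponding $\mathrm{MMM}$-polynomials via the explicit formula $\alpha$.

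The starting point is the proof of Theorem \ref{thm:ortho} (applied here with $2k$ replaced by $6k$, i.e., to $\Q\mathcal{D}^\ell(6k)$): the kernel of $\Phi : \Q\mathcal{D}^\ell(6k) \to (H_\Q^{\otimes 6k})^{\mathrm{Sp}}$ is exactly $\bigoplus_{\lambda_1 > g} E_\lambda$, since $\mu_\lambda(g)$ vanishes precisely when $\lambda_1 > g$. Therefore any $\xi \in E_\lambda$ with $\lambda_1 > g$ satisfies $\Phi(\xi) = 0$ already at the level of tensors in $(H_\Q^{\otimes 6k})^{\mathrm{Sp}}$, hence also after antisymmetrizing in blocks of three to land in $(\wedge^{2k}(\wedge^3 H_\Q))^{\mathrm{Sp}}$. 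Under the identification via the top horizontal map of \eqref{eq:graph}, this antisymmetrization is exactly the graphical collapsing operation of Definition \ref{def:g1}, so the element $\Gamma_\xi \in \Q[\Gamma;\Gamma\in\mathcal{G}]^{(2k)}$ represents zero in $(\wedge^{2k}(\wedge^3 H_\Q))^{\mathrm{Sp}}$.

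Next I would invoke the main combinatorial result of \cite{morita03} that the graphical operations $\Gamma \mapsto \Gamma^{(p)}$ and $\Gamma \mapsto \bar{\Gamma}$ of Definition \ref{def:g2} realize the direct-sum decomposition \eqref{eq:up}: specifically, $\Gamma^{(p)}_\xi$ represents the component of $\Gamma_\xi$ in the summand $(\wedge^{2k-p} U_\Q \otimes \wedge^p H_\Q)^{\mathrm{Sp}}$, while $\bar{\Gamma}_\xi$ represents its projection to the pure $(\wedge^{2k} U_\Q)^{\mathrm{Sp}}$ summand (the $p=0$ piece). Since $\Gamma_\xi$ itself already vanishes in $(\wedge^{2k}(\wedge^3 H_\Q))^{\mathrm{Sp}}$, every one of its projections vanishes in the corresponding summand too.

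Finally, by \eqref{eq:rho2*} together with the explicit $\alpha$ and $\beta$ from \cite{kamp}, the assignments $\Gamma \mapsto \alpha_\Gamma$ and $\Gamma \mapsto \beta_\Gamma$ factor through $(\wedge^{2k}(\wedge^3 H^*_\Q)/([1^2]^{\text{torelli}}_{\mathrm{Sp}} \oplus [2^2]_{\mathrm{Sp}}))^{\mathrm{Sp}}$ and $(\wedge^{2k} U^*_\Q/([2^2]_{\mathrm{Sp}}))^{\mathrm{Sp}}$ respectively, landing in $\mathcal{R}^{2k}(\mathcal{M}_{g,*})$ and $\mathcal{R}^{2k}(\mathcal{M}_g)$. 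Thus the vanishing of $\Gamma_\xi^{(p)}$ in $(\wedge^{2k-p}U_\Q \otimes \wedge^p H_\Q)^{\mathrm{Sp}}$ forces $\alpha_{\Gamma^{(p)}_\xi} = 0$ in $\mathcal{R}^{2k}(\mathcal{M}_{g,*})$, proving (ii), while the vanishing of $\bar{\Gamma}_\xi$ in $(\wedge^{2k} U_\Q)^{\mathrm{Sp}}$ (which, unlike the other $\Gamma^{(p)}_\xi$, pulls back from the moduli space without basepoint) forces $\alpha_{\bar{\Gamma}_\xi} = 0$ in $\mathcal{R}^{2k}(\mathcal{M}_g)$, proving (i).

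The main obstacle is not the vanishing itself, which is almost formal once the diagrams are assembled, but rather the compatibility check that the unstable graphical projector $\bar{\Gamma}$ from Definition \ref{def:g2} really corresponds to the $\mathrm{Sp}$-equivariant projection onto the $(\wedge^{2k} U_\Q)^{\mathrm{Sp}}$ summand, and similarly for $\Gamma^{(p)}$. This requires tracking the normalization factor $(2g-2)^{-p}$ and the signs $(-1)^p(-2g)^{\ell(\varpi)}$ against the combinatorics of $\widetilde{C}: \wedge^3 H_\Q \to \wedge^3 H_\Q$ from \eqref{eq:xip}, which is genus-dependent; one must then verify that this depends only on the class of $\xi$ through $\Phi$, so that the vanishing $\Phi(\xi)=0$ indeed propagates through these genus-dependent operations.
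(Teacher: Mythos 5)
Your proposal is correct and follows essentially the same route as the paper's own proof: the vanishing $\Phi(\xi)=0$ for $\xi\in E_\lambda$ with $\lambda_1>g$ (via Theorems \ref{thm:ortho} and \ref{thm:aalpha}), propagated through the collapsing of Definition \ref{def:g1} and the graphical realization of the decomposition \eqref{eq:up} by Definition \ref{def:g2}, then read off in $\mathcal{R}^*(\mathcal{M}_{g,*})$ and $\mathcal{R}^*(\mathcal{M}_{g})$ via \eqref{eq:rho2*}, including the same observation that $\bar{\Gamma}$ extracts the part coming from the base $\mathcal{M}_g$ under the injection $\mathcal{R}^*(\mathcal{M}_g)\subset\mathcal{R}^*(\mathcal{M}_{g,*})$. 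The compatibility check you flag as the main obstacle is precisely what the paper delegates to the earlier work \cite{morita03}, so your citation of that result at this step matches the paper's treatment.
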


\begin{proof}
We first prove $\mathrm{(ii)}$.
Let $\xi\in E_\lambda\subset \Q\mathcal{D}^\ell(6k)$ be any element. Then we have $\Phi(\xi)\in U_{\lambda'}$ 
by Theorem \ref{thm:aalpha}. If we further assume that $\lambda_1>g$, then $h(\lambda')=\lambda_1>g$
so that $\Phi(\xi)=0\in (H_\Q^{\otimes 6k})^{\mathrm{Sp}}$ by Theorem \ref{thm:ortho}.
It follows that the projected image of $\Phi(\xi)$ in $(\wedge^{2k}(\wedge^3 H_\Q))^{\mathrm{Sp}}$
is also trivial. Furthermore all the components of this element with respect to the direct sum
decomposition \eqref{eq:up} are trivial because they are obtained by applying the
operations \eqref{eq:xip}. Since these tensorial operations are realized by the graphical
operations given in Definition \ref{def:g1} and Definition \ref{def:g2}, the claim holds.

Next we prove $\mathrm{(i)}$. The graphical operation
$$
\Gamma\mapsto \bar{\Gamma}=\Gamma-\Gamma^{(1)}+\Gamma^{(2)}-\cdots+\Gamma^{(2k)}
$$
given in Definition \ref{def:g2} realizes the tensorial operation of taking the
$(\wedge^{2k} U_\Q)^{\mathrm{Sp}}$ part out of the whole $(\wedge^{2k}(\wedge^3 H_\Q))^{\mathrm{Sp}}$
in the direct sum decomposition \eqref{eq:up}.
On the other hand, the
diagram \eqref{eq:rho2*} shows that this operation
extracts exactly the 
``basic" part with respect to the projection $\mathcal{M}_{g,*}\rightarrow \mathcal{M}_{g}$,
namely the part which comes from the "base" $\mathcal{M}_{g}$ 
(observe here that we have a natural
injection $\mathcal{R}^*(\mathcal{M}_g) \subset \mathcal{R}^*(\mathcal{M}_{g,*})$).
The claim follows.
\end{proof}

\begin{remark}
The first case $\lambda=[3k]$ in the above theorem was treated in our former paper \cite{morita03} completely.
In this case $2\lambda=[6k]$ is the one dimensional trivial representation of $\mathfrak{S}_{6k}$
so that $\mathrm{dim}\, E_\lambda=1$. We have determined the relations 
$\alpha_{\bar{\Gamma}_\xi}=0\in\mathcal{R}^{2k}(\mathcal{M}_{g})$
explicitly  for all $k$ and $g=3k-1,3k-2,\ldots,2$, where $\xi$ denotes a non-trivial
unique (up to scalars) element of $E_\lambda$. In this computation, the eigenvalue 
$$
\mu_{[3k]}=2g(2g-2)\cdots (2g-6k+2)
$$
of $E_{[3k]}$, which is the smallest one, played an important role.

As an application of this first case, we showed that $\mathcal{R}^*(\mathcal{M}_g)$
is generated by the first $[g/3]$ $\mathrm{MMM}$ classes. Later Ionel \cite{ionel} proved a stronger result that this fact holds
at the Chow algebra level, namely in $\mathcal{R}^*(\mathbf{M}_g)$.

The second case corresponds to $\lambda=[3k-1,1]$ and $\mathrm{dim}\, E_\lambda=9k(2k-1)$,
the third case corresponds to $\lambda=[3k-2,2]\ (k\geq 2)$ and $\mathrm{dim}\, E_\lambda=\frac{1}{2}k(3k-1)(6k-1)(6k-7)$,
and so on. Schematically, we may write
\begin{align*}
&[3k]\mapsto 0\quad (g\leq 3k-1)\ \Rightarrow\ \text{$\mathcal{R}^*(\mathcal{M}_g)$ is generated by the first $[g/3]$ $\mathrm{MMM}$ classes}\\
& [3k-1,1]\mapsto 0\quad (g\leq 3k-2)\\
& [3k-2,2], [3k-2,1^2] \mapsto 0\quad (g\leq 3k-3)\\
& [3k-3,3], [3k-3,21], [3k-3,1^3] \mapsto 0\quad (g\leq 3k-4)\\
& [3k-4,4], [3k-4,31], [3k-4,2^2], [3k-4,21^2], [3k-3,1^4] \mapsto 0\quad (g\leq 3k-5)\\
& \cdots
\end{align*}
Although we have to consider the collapsing $E_\lambda\rightarrow L_{\lambda'}$ described
in diagram \eqref{eq:ef}, it seems reasonable to expect that many relations can be obtained in this way.
We would like to pursue this in future. 
\end{remark}

In view of the fact that Theorem \ref{thm:ortho} gives a complete description how the 
$\mathrm{Sp}$-invariant tensors degenerate according as the genus decreases from the stable range to the
last case $g=1$, it would be reasonable to make the following.

\begin{conj}
The relations given in Theorem \ref{thm:rel} make a complete set of relations for the tautological algebras
$\mathcal{R}^*(\mathcal{M}_{g})$ and $\mathcal{R}^*(\mathcal{M}_{g,*})$.
\label{conj:complete}
\end{conj}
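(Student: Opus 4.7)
The plan is to promote the \emph{necessity} established in Theorem \ref{thm:rel} to a \emph{sufficiency} statement by analyzing the kernel of the composed map
$$
\Q[\Gamma;\Gamma\in\mathcal{G}]^{(2k)} \twoheadrightarrow \bigl(\wedge^{2k}(\wedge^3 H_\Q)\bigr)^{\mathrm{Sp}} \xrightarrow{\rho_2^*} \mathcal{R}^{2k}(\mathcal{M}_{g,*})
$$
(and its unpointed analogue through $\wedge^{2k}U_\Q$) at \emph{every} genus $g$. The first step is to decompose the middle term along Theorem \ref{thm:ortho}: the diagram \eqref{eq:qs} exhibits $(\wedge^{2k}(\wedge^3 H_\Q))^{\mathrm{Sp}}$ as $\bigoplus_\lambda L_\lambda \subset \bigoplus_\lambda U_\lambda$, and hence the genus-dependent degeneration of the ambient space $(H_\Q^{\otimes 6k})^{\mathrm{Sp}}$ — which vanishes on $U_\lambda$ precisely when $h(\lambda)>g$, equivalently $\lambda_1'>g$ — induces the vanishing of the corresponding $L_\lambda$ summands. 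Tracing this through the graphical realization of Definitions \ref{def:g1} and \ref{def:g2} gives exactly the relations of Theorem \ref{thm:rel}; the task is to show there are no others.

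Next I would split completeness into two clearly separated assertions. \textbf{(A)} \emph{No relations beyond those of representation-theoretic origin:} the map $\bar{\rho}_2^*$ from $(\wedge^*U_\Q^*/([2^2]_{\mathrm{Sp}}))^{\mathrm{Sp}}$ to $\mathcal{R}^*(\mathcal{M}_g)$ (and the pointed variant $\rho_2^*$) is an \emph{isomorphism} for all $g$, not merely stably. \textbf{(B)} \emph{The graphical kernel is exactly what Theorem \ref{thm:ortho} predicts:} the kernel of the collapse $\Q\mathcal{D}^\ell(6k)\to \Q[\Gamma;\Gamma\in\mathcal{G}]^{(2k)}$ composed with the projection to $(\wedge^{2k}(\wedge^3 H_\Q))^{\mathrm{Sp}}$, modulo the Whitehead/IH relations $W$ and $\widetilde{W}$, is generated by $\bigoplus_{\lambda_1>g}E_\lambda$ together with the kernel in the stable range.

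To attack (B), which is essentially the algebraic core, I would use the orthogonality from Theorem \ref{thm:lcd}: because each $E_\lambda$ is an $\mathfrak{S}_{6k}$-irreducible eigenspace of $\mathcal{M}$ and the collapse map is $\mathfrak{S}_{6k}$-equivariant in a controlled way, its kernel decomposes as a sum of entire $E_\lambda$'s plus the relations forced by the antisymmetrization $H_\Q^{\otimes 6k}\to \wedge^{2k}(\wedge^3 H_\Q)$ and by the ideal $([2^2]_{\mathrm{Sp}})$. One then shows that the latter two contributions are precisely what encode the $W$, $\widetilde{W}$ relations, leaving only the $\lambda_1>g$ summands as genuinely new relations. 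The refined decomposition \eqref{eq:up} together with the graphical operation $\Gamma\mapsto\bar{\Gamma}$ reduces the pointed case to the unpointed one.

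The main obstacle is \textbf{(A)}. In the stable range this is Kawazumi--Morita combined with Madsen--Weiss, but unstably the injectivity of $\bar{\rho}_2^*$ is essentially as deep as Faber's perfect-pairing conjecture and is known to be subtle: geometric relations among MMM classes (e.g. those arising from Looijenga's vanishing, the Faber--Zagier relations, or Pandharipande--Pixton) must all be shown to be accounted for by the representation-theoretic mechanism above. A plausible route is to compare the conjectural ``complete set'' on the Chow side with the Faber--Zagier relations, verifying genus by genus (as Faber did up to $g\le 23$) that every known relation lies in the ideal generated by the $\alpha_{\bar{\Gamma}_\xi}$ with $\xi\in E_\lambda$, $\lambda_1>g$. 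A positive outcome would, conversely, furnish a new, purely topological/representation-theoretic explanation for the Faber--Zagier relations; a negative outcome would pinpoint exactly which geometric input is missing from the present framework.
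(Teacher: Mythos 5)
The statement you are asked to prove is Conjecture \ref{conj:complete}: the paper itself offers \emph{no proof} of it, only the heuristic that Theorem \ref{thm:ortho} completely controls how the invariant tensors degenerate with $g$. Your text is accordingly not a proof but a research programme, and as written it contains a genuine gap at each of its two pillars. Assertion (A) --- that $\bar{\rho}_2^*$ and $\rho_2^*$ are injective for \emph{all} $g$, not merely in Harer's stable range --- is precisely where the content of the conjecture lives: the paper's diagram \eqref{eq:rho2*} gives isomorphisms only stably, and unstably the conjecture asserts exactly that the kernel of the map to $\mathcal{R}^*(\mathcal{M}_g)$ is no larger than what the $\lambda_1>g$ eigenspaces force. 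Assuming (A) therefore begs the question rather than reducing it; and you yourself concede that establishing (A) is ``essentially as deep as Faber's perfect-pairing conjecture,'' which is an admission that the argument terminates in an open problem, not in a proof. A genus-by-genus comparison with the Faber--Zagier relations, as you suggest, could at best give evidence (as Faber's verification up to $g\leq 23$ does on the Chow side), never completeness for all $g$.

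Assertion (B) also has a concrete technical flaw. The collapse $\Q\mathcal{D}^\ell(6k)\to \Q[\Gamma;\Gamma\in\mathcal{G}]^{(2k)}$ is \emph{not} $\mathfrak{S}_{6k}$-equivariant in any sense strong enough for your argument: it factors through coinvariants of the subgroup permuting the triples $\{1,2,3\},\dots,\{6k-2,6k-1,6k\}$, so its kernel is not a direct sum of whole $E_\lambda$'s. This is visible in the paper's diagram \eqref{eq:ef}, where each $E_\lambda$ maps onto a summand $L'_\lambda$ that is in general a \emph{proper} quotient (possibly zero); consequently elements of $E_\lambda$ with $\lambda_1\leq g$ can collapse into the kernel, and relations of Theorem \ref{thm:rel} can be vacuous, without either phenomenon being detected by the eigenspace decomposition of Theorem \ref{thm:lcd}. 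Identifying the collapse kernel plus the ideal $([2^2]_{\mathrm{Sp}})$ with the graphical relations $W$, $\widetilde{W}$ is exactly the hard unresolved combinatorial step (the paper defers this direction to the sequel \cite{mss5}); your proposal names it but supplies no mechanism for carrying it out. In short: you have correctly mapped the terrain and located the obstructions, but neither (A) nor (B) is proved, so the conjecture remains exactly as open after your argument as before it.
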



\section{Concluding remarks}

In a joint work  \cite{mss4} with T. Sakasai and M. Suzuki, another
role of the canonical metric is given. In this work, we investigate the structure of the
symplectic derivation Lie algebra $\mathfrak{h}_{g,1}$ which is the Lie algebra consisting of symplectic
derivations of the free Lie algebra generated by $H_\Q$. This Lie algebra is a very important subject in topology 
and also in other related fields of mathematics including number theory. 
In particular, the symplectic invariant part $\mathfrak{h}_{g,1}^{\mathrm{Sp}}$
of this Lie algebra concerns many deep questions in various contexts, e.g. homology cobordism invariants for homology cylinders,
the rational cohomology group of the outer automorphism group of free groups and the Galois obstructions
in the theory of arithmetic mapping class groups.
We obtain several results about $\mathfrak{h}_{g,1}^{\mathrm{Sp}}$
and we refer to the above cited paper for details.

Finally we would like to mention another possible application of our metric. 
That is concerned with ``harmonic representatives" of cohomology classes
which arise in various symplectic Lie algebras, by which we mean graded Lie algebras
such that each graded piece is an $\mathrm{Sp}$-module and the bracket operation
is an $\mathrm{Sp}$-morphism.
We are keeping in mind the Lie algebra $\mathfrak{h}_{g,1}$ mentioned above
and more generally Kontsevich's graph (co)homology theory (see \cite{kontsevich1}\cite{kontsevich2}).
It might be worthwhile to investigate whether harmonic representatives of 
cohomology classes of these Lie algebras would uncover some unknown geometric property of them.

In particular, the case of the Gel'fand-Fuks cohomology of formal Hamiltonian vector fields
on $H_\R=H_\Q\otimes\R$, both in the stable as well as the unstable contexts, seems 
to be interesting. In the simplest case of formal Hamiltonian vector fields on the plane,
Gel'fand, Kalinin and Fuks found in \cite{gkf} an {\it exotic class} and Metoki 
found another one in \cite{metoki}. The problem of determining whether these classes are non-trivial
as characteristic classes of transversely symplectic foliations remains unanswered up to the present.
In the framework due to Kontsevich \cite{kontsevich3}, it was shown in our paper with Kotschick \cite{km}
that the Gel'fand-Kalinin-Fuks class can be decomposed into a product of certain leaf cohomology class
and the transverse symplectic form and in \cite{mikami} Mikami proved a similar statement for the
Metoki class. It would be worthwhile to study whether the harmonic representatives of these leaf cohomology
classes would shed light on the above difficult problem.

\bibliographystyle{amsplain}

\end{document}